\keywords{Coalgebra, Geometric Logic, Modal Logic, Topology}
\newcommand\reallywidetilde[1]{\ThisStyle{%
  \setbox0=\hbox{$\SavedStyle#1$}%
  \stackengine{-.1\LMpt}{$\SavedStyle#1$}{%
    \stretchto{\scaleto{\SavedStyle\mkern.2mu\AC}{.5150\wd0}}{.6\ht0}%
  }{O}{c}{F}{T}{S}%
}}
\newcommand{\mc}[1]{\mathcal{#1}}
\newcommand{\ms}[1]{\mathscr{#1}}
\newcommand{\mf}[1]{\mathfrak{#1}}
\newcommand{\topo}[1]{\mathbb{#1}}
\newcommand{\cat}[1]{\mathbf{#1}}
\newcommand{\fun}[1]{\mathsf{#1}}
\renewcommand{\hat}[1]{\widehat{#1}}
\renewcommand{\tilde}[1]{\widetilde{#1}}
\renewcommand{\phi}{\varphi}
\newcommand{\und}[1]{\underline{#1}}
\renewcommand{\epsilon}{\varepsilon}
\renewcommand{\iff}{\quad\text{iff}\quad}
\DeclareMathOperator{\Ax}{Ax}
\DeclareMathOperator{\id}{id}
\DeclareMathOperator{\op}{op}
\newcommand{\wi}{\eqslantless}
\renewcommand{\th}{\operatorname{th}}
\newcommand{\Sier}{\topo{S}}
\newcommand{\two}{\mathbbm{2}}
\DeclareMathOperator{\GML}{GML}
\DeclareMathOperator{\GL}{GL}
\DeclareMathOperator{\ML}{ML}
\newcommand{\modeq}{\equiv_{\Lambda}}
\newcommand{\bigveeup}{\textstyle{\bigvee^{\uparrow}}}
\newcommand{\nega}{{\sim}}
\newcommand{\bigcupup}{\textstyle{\bigcup^{\hspace{-1.1mm}{\uparrow}}}}
\newcommand{\cp}{\mathrel{\triangleleft}}
\newcommand{\llb}{\llbracket}
\newcommand{\rrb}{\rrbracket}
\newcommand{\llp}{\llparenthesis\,}
\newcommand{\rrp}{\,\rrparenthesis}
\newcommand{\dbox}{\boxbar}
\newcommand{\ddiamond}{\mathbin{\rotatebox[origin=c]{45}{$\boxslash$}}}
\newcommand{\diamonddot}{\mathbin{\rotatebox[origin=c]{45}{$\boxdot$}}}
\begin{document}

\title[Coalgebraic Geometric Logic]{Coalgebraic Geometric Logic: Basic Theory}
\titlecomment{{\lsuper*}Paper appeared at CALCO 2019}

\author[N.~Bezhanishvili]{Nick Bezhanishvili\rsuper{a}}	
\address{University of Amsterdam, Amsterdam, The Netherlands}	
\email{\{n.bezhanishvili,y.venema\}@uva.nl}  

\author[J.~de Groot]{Jim de Groot\lmcsorcid{0000-0003-1375-6758}\rsuper{b}}	
\address{The Australian National University, Canberra, Ngunnawal country, Australia}	
\email{jim.degroot@anu.edu.au}  

\author[Y.~Venema]{Yde Venema\rsuper{a}}	




\begin{abstract}
  \noindent Using the theory of coalgebra, we introduce a uniform framework for
  adding modalities to the language of propositional geometric logic.
  Models for this logic are based on coalgebras for an endofunctor on
  some full subcategory of the category of topological spaces and
  continuous functions.
  We investigate derivation systems, soundness and completeness
  for such geometric modal logics, and we specify a method
  of lifting an endofunctor on $\cat{Set}$, accompanied by a collection of
  predicate liftings, to an endofunctor on the category of topological spaces,
  again accompanied by a collection of (open) predicate liftings.
  Furthermore, we compare the notions of modal equivalence, behavioural
  equivalence and bisimulation on the resulting class of models, and we provide
  a final object for the corresponding category.
\end{abstract}

\maketitle

\section{Introduction}

Propositional geometric logic arose at the interface of (pointfree) topology,
logic and theoretical computer science as the logic of \emph{finite
observations}~\cite{Abr87,Vic89}.
Its language is constructed from a set of proposition letters by applying finite
conjunctions and arbitrary disjunctions, these being the propositional
operations preserving the property of finite observability.
Through an interesting topological connection, formulas of geometric logic
can be interpreted in the frame of open sets of a topological space.
Central to this connection is the well-known dual adjunction between the category $\cat{Frm}$ of
frames and frame morphisms and the category $\cat{Top}$ of topological spaces
and continuous maps, which restricts to several interesting Stone-type
dualities~\cite{Joh82}.

Coalgebraic logic is a framework in which generalised versions of modal logics
are developed parametric in the signature of the language and a functor
$\fun{T} : \cat{C} \to \cat{C}$ on some base category $\cat{C}$.
With classical propositional logic as base logic, two natural choices for the base
category are $\cat{Set}$, the category of sets and functions, and $\cat{Stone}$,
the category of Stone spaces and continuous functions, i.e.~the
topological dual to the algebraic category of Boolean algebras.
Coalgebraic logic for endofunctors on $\cat{Set}$ has been well investigated and
still is an active area of research, see e.g.~\cite{CirEA08,KupPat11}.
In this setting, modal operators can be defined using the notion of relation
lifting~\cite{Mos99} or predicate lifting~\cite{Pat03b}.
Coalgebraic logic in the category of Stone coalgebras has been studied
in~\cite{KupKurVen04,HanKup04,EnqSou17,BezEnqGro20}, and there is
a fairly extensive literature on the design of a coalgebraic modal logic based
on a general Stone-type duality (or dual adjunction), see for
instance~\cite{BonKur05,BonKur06,chen:cate14,Kli07} and references therein.

In this paper we investigate some links between coalgebraic logic and geometric
logic.
That is, we use methods from coalgebraic logic to introduce modal operators
to the language of geometric logic, with the intention of studying
interpretations of these logics in certain topological coalgebras.
Note that extensions of geometric logic with the basic modalities $\Box$ and
$\Diamond$, which are closely related to the topological Vietoris construction,
have received much attention in the literature, see~\cite{Vic89} for some early
history.
A first step towards developing coalgebraic geometric logic was taken
in~\cite{VVV13}, where a method is explored to lift a functor on $\cat{Set}$
to a functor on the category $\cat{KHaus}$ of compact Hausdorff spaces, and the
connection is investigated between the lifted functor and a relation-lifting
based ``cover'' modality.

Our aim here is to develop a framework for the coalgebraic geometric logics that
arise if we extend geometric logic with modalities that are induced by
appropriate \emph{predicate liftings}.
Guided by the connection between geometric logic and topological spaces, we
choose the base category of our framework to be $\cat{Top}$ itself, or one of
its full subcategories such as $\cat{Sob}$ (sober spaces), $\cat{KSob}$ (compact
sober spaces) or $\cat{KHaus}$ (compact Hausdorff spaces).
On this base category $\cat{C}$ we then consider an arbitrary endofunctor
$\fun{T}$ which serves as the type of our topological coalgebras.
Furthermore, we shall see that if we want our formulas to be interpreted as
\emph{open sets} of the coalgebra carrier, we need the predicate liftings that
interpret the modalities of the language to satisfy some natural \emph{openness}
condition.
Summarizing, we shall study the coalgebraic geometric logic induced by
(1) a functor $\fun{T}: \cat{C} \to \cat{C}$, where $\cat{C}$ is a full
subcategory of $\cat{Top}$, and
(2) a set $\Lambda$ of open predicate liftings for $\fun{T}$.
As running examples we take the combination of the basic modalities for the
Vietoris functor, and that of the monotone box and diamond modalities for
various topological manifestations of the monotone neighbourhood functor on
$\cat{Set}$.
The structures providing the semantics for our coalgebraic geometric logics are
the \emph{$\fun{T}$-models} comprised of a $\fun{T}$-coalgebra together with a
valuation mapping proposition letters to open sets in the coalgebra carrier.
\medskip

\noindent
The main results that we report on here are the following:
\begin{itemize}
\item
Section~\ref{sec:mon} contains a detailed description of the \emph{monotone
neighbourhood functor} on $\cat{KHaus}$, which naturally extends the monotone
functor on $\cat{Stone}$~\cite{HanKup04} that corresponds to monotone modal logic.
\item
In Section~\ref{sec:ax} we discuss derivation systems for coalgebraic geometric
logic, based on \emph{consequence pairs}, and derive a general completeness result.
\item
After that, in Section~\ref{sec:lift} we adapt the method of~\cite{KupKurPat04} in order to
lift a $\cat{Set}$-functor together with a collection of predicate liftings to
an endofunctor on $\cat{Top}$.
We obtain the Vietoris functor and monotone functor on $\cat{KHaus}$ as
restrictions of such lifted functors.
\item
In Section~\ref{sec:final}, we construct a final object in the
category of $\fun{T}$-models, where $\fun{T}$ is an endofunctor on $\cat{Top}$
which preserves sobriety and admits a Scott-continuous, characteristic geometric
modal signature.
\item
Finally, in Section~\ref{sec-bisim} we transfer the notion of
$\Lambda$-bisimilarity from~\cite{GorSch13,BakHan17} to our setting, and we
compare this to geometric modal equivalence, behavioural equivalence and
Aczel-Mendler bisimilarity.
Our main finding is that on the categories $\cat{Top}$, $\cat{Sob}$ and
$\cat{KSob}$, the first three notions coincide, provided $\Lambda$ and $\fun{T}$
meet some reasonable conditions.
\end{itemize}
We finish the paper with listing some questions for further research.

\section{Preliminaries}

  We briefly fix notation and review some preliminaries.

\subsection{Categories and functors}
  We use a $\cat{bold}$ $\cat{font}$ for categories. We assume familiarity with
  the following categories and functors:
  \begin{itemize}
    \item $\cat{Set}$ is the category of sets and functions;
    \item $\cat{Top}$ is the category of topological spaces and continuous
          functions;
    \item $\cat{KHaus}$ and $\cat{Stone}$ are the full subcategories of
          $\cat{Top}$ whose objects are compact Hausdorff spaces and Stone spaces,
          respectively;
    \item $\cat{BA}$ is the category of Boolean algebras and Boolean algebra
          morphisms.
  \end{itemize}
  Categories can be connected by functors. We use a $\fun{sans}$ $\fun{serif}$ $\fun{font}$ for
  functors. In particular, the following functors are regularly used in this
  paper:
  \begin{itemize}
    \item $\fun{U} : \cat{Top} \to \cat{Set}$ is the forgetful functor sending a
          topological space to its underlying set. The functor $\fun{U}$
          restricts to every subcategory of $\cat{Top}$, in which case we shall
          abuse notation and also call it $\fun{U}$;
    \item $\fun{P} : \cat{Set} \to \cat{Set}$ and $\breve{\fun{P}} :
          \cat{Set}^{\op} \to \cat{Set}$ are the covariant and contravariant
          powerset functor respectively;
    \item $\fun{Q} : \cat{Set}^{\op} \to \cat{BA}$ sends a set to its powerset
          Boolean algebra and a function to the inverse image map viewed as
          morphism in $\cat{BA}$;
    \item $\fun{\Omega} : \cat{Top} \to \cat{Set}$ is the contravariant functor
          that sends a topological space to
          its set of opens.
  \end{itemize}
  Note that $\breve{\fun{P}} = \fun{U}_{\cat{BA}} \circ \fun{Q}$,
  where $\fun{U}_{\cat{BA}} : \cat{BA} \to \cat{Set}$ is the obvious forgetful functor.
  More categories and
  functors will be defined along the way. We use the symbol $\equiv$ for
  categorical equivalence.

\subsection{Coalgebra}\label{subsec:coalg}

  Let $\cat{C}$ be a category and $\fun{T}$ an endofunctor on $\cat{C}$. A
  \emph{$\fun{T}$-coalgebra} is a pair $(X, \gamma)$ where $X$ is an object in
  $\cat{C}$ and $\gamma : X \to \fun{T}X$ is a morphism in $\cat{C}$.
  A \emph{$\fun{T}$-coalgebra morphism} between two $\fun{T}$-coalgebras
  $(X, \gamma)$ and $(X', \gamma')$ is a morphism $f : X \to X'$ in $\cat{C}$
  satisfying $\gamma' \circ f = \fun{T}f \circ \gamma$.
  The collection of $\fun{T}$-coalgebras and $\fun{T}$-coalgebra morphisms forms
  a category, which we shall denote by $\cat{Coalg}(\fun{T})$. The category
  $\cat{C}$ is called the \emph{base category} of $\cat{Coalg}(\fun{T})$.

  The notion of an \emph{algebra} for $\fun{T}$ is defined dually, and gives
  rise to the category $\cat{Alg}(\fun{T})$.

\begin{exa}[Kripke frames]
  Kripke frames correspond 1-1 with $\fun{P}$-coalgebras. For a Kripke frame
  $(X, R)$ define $\gamma_R : X \to \fun{P}X : x \mapsto \{ y \mid xRy \}$.
  Then $(X, \gamma_R)$ is a $\fun{P}$-coalgebra. Conversely, for a
  $\fun{P}$-coalgebra $(X, \gamma)$ define $R_{\gamma}$ by $xR_{\gamma}y$ iff
  $y \in \gamma(x)$. Then $(X, R_{\gamma})$ is a Kripke frame. It is not hard to
  see that $R_{\gamma_R} = R$ and $\gamma_{R_{\gamma}} = \gamma$, so we obtain a
  bijection between Kripke frames and $\fun{P}$-coalgebras. Moreover, bounded
  morphisms between Kripke frames are precisely $\fun{P}$-coalgebra morphisms.
  Thus, we have
  \begin{equation*}
    \cat{Krip} \cong \cat{Coalg}(\fun{P}),
  \end{equation*}
  where $\cat{Krip}$ is the category of Kripke frames and bounded morphisms.
\end{exa}

\begin{exa}[Monotone neighbourhood frames]\label{exm-mon-set}
  Let $\fun{D} : \cat{Set} \to \cat{Set}$ be the functor given on objects by
  \begin{equation*}
    \fun{D}X = \{ W \subseteq \fun{P}X \mid \text{ if }a \in W
      \text{ and } a \subseteq b \text{ then } b \in W \},
  \end{equation*}
  where $X$ is a set. For a morphism $f : X \to X'$ define
  \begin{equation*}
    \fun{D}f
      : \fun{D}X \to \fun{D}X'
      : W \mapsto \{ a' \in \fun{P}X' \mid f^{-1}(a') \in W \}.
  \end{equation*}
  Then the category of monotone frames and bounded morphisms is isomorphic to
  $\cat{Coalg}(\fun{D})$ \cite{Che80, Han03, HanKup04}.
\end{exa}

\subsection{Coalgebraic logic for \texorpdfstring{$\cat{Set}$}{Set}-coalgebras}

  Let $\fun{T}$ be a $\cat{Set}$-functor and $\Phi$ a set of proposition letters.
  A \emph{$\fun{T}$-model} is a triple $(X, \gamma, V)$ where $(X, \gamma)$ is a
  $\fun{T}$-coalgebra and $V : \Phi \to \fun{P}X$ is a valuation of the
  proposition letters. An \emph{$n$-ary predicate lifting for $\fun{T}$} is a
  natural transformation
  \begin{equation*}
    \lambda : \breve{\fun{P}}^n \to \breve{\fun{P}} \circ \fun{T},
  \end{equation*}
  where $\breve{\fun{P}}^n$ denotes the $n$-fold product of the contravariant
  powerset functor.
  A predicate lifting is called \emph{monotone in its $i$-th argument} if for
  all sets $X$ and subsets $a_1, \ldots, a_n, b \subseteq X$ we have
  $\lambda_X(a_1, \ldots, a_i, \ldots, a_n) \subseteq
  \lambda_X(a_1, \ldots, a_i \cup b, \ldots, a_n)$.

  For a set $\Lambda$ of predicate liftings for $\fun{T}$, define the
  \emph{language} $\ML(\Lambda)$ by
  \begin{equation*}
    \phi ::= p \mid \neg \phi
               \mid \phi \wedge \phi
               \mid \heartsuit^{\lambda}(\phi_1, \ldots, \phi_n),
  \end{equation*}
  where $p \in \Phi$ and $\lambda \in \Lambda$ is $n$-ary.
  The \emph{semantics} of $\phi \in \ML(\Lambda)$ on a $\fun{T}$-model
  $\mf{X} = (X, \gamma, V)$ is given recursively by
  \begin{align*}
    &\llb p \rrb^{\mf{X}} = V(p),
      \quad \llb \phi_1 \wedge \phi_2 \rrb^{\mf{X}}
        = \llb \phi_1 \rrb^{\mf{X}} \cap \llb \phi_2 \rrb^{\mf{X}},
      \quad \llb \neg \phi \rrb^{\mf{X}}
        = X \setminus \llb \phi \rrb^{\mf{X}}, \\
    &\llb \heartsuit^{\lambda}(\phi_1, \ldots, \phi_n) \rrb^{\mf{X}}
        = \gamma^{-1}(\lambda(\llb \phi_1 \rrb^{\mf{X}}, \ldots, \llb
          \phi_n \rrb^{\mf{X}})),
  \end{align*}
  where $p \in \Phi$ and $\lambda$ ranges over $\Lambda$.

\begin{exa}[Kripke models]\label{exm-krip-pred}
  Consider for $\fun{P}$-models the predicate liftings $\lambda^{\Box},
  \lambda^{\Diamond} : \breve{\fun{P}} \to \breve{\fun{P}} \circ \fun{P}$
  given by
  \[
    \lambda^{\Box}_X(a) = \{ b \in \fun{P}X \mid b \subseteq a \}, \qquad
    \lambda^{\Diamond}_X(a) = \{ b \in \fun{P}X \mid b \cap a \neq \emptyset \}.
  \]
  Then $\lambda^{\Box}$ and $\lambda^{\Diamond}$ yield the usual Kripke semantics
  of $\Box$ and $\Diamond$.
\end{exa}

\begin{exa}[Monotone neighbourhood frames]\label{exm-mon-set-pred}
  Monotone neighbourhood models are precisely $\fun{D}$-models, where $\fun{D}$
  is the functor defined in Example~\ref{exm-mon-set}. The usual semantics for
  the box and diamond in this setting can be obtained from the predicate liftings
  given by
  \begin{equation}
    \lambda^{\Box}_X(a) = \{ W \in \fun{D}X \mid a \in W \}, \qquad
    \lambda^{\Diamond}_X(a) = \{ W \in \fun{D}X \mid X \setminus a \notin W \}.
  \end{equation}
\end{exa}

  We refer to~\cite{KupPat11} for many more examples of coalgebraic logics for
  $\cat{Set}$-functors.

\subsection{Geometric logic}\label{subsec:gl}

  Let $\Phi$ be a set of proposition letters. The language $\GL(\Phi)$ of geometric
  formulas is given by
  \[
    \phi ::= \top \mid \bot
                  \mid p
                  \mid \phi \wedge \phi
                  \mid \bigvee_{i \in I} \phi_i
  \]
  where $p \in \Phi$ and $I$ is some index set. \emph{Coherent} formulas are
  defined in the same way, but without infinitary disjuctions. These are also known
  as the formulas from \emph{positive} logic.

  We describe the logical system as a collection of
  binary consequence pairs, written as $\phi \cp \psi$.
  A \emph{geometric logic} is a collection consequence pairs closed under the
  following rules:
  \emph{identity}
          \[
            \phi \cp \phi,
          \]
  \emph{cut}
          \[
            \dfrac{\phi \cp \psi \quad \psi \cp \chi}{\phi \cp \chi},
          \]
  the \emph{conjunction rules}
          \[
            \phi \cp \top, \qquad
            \phi \wedge \psi \cp \phi, \qquad
            \phi \wedge \psi \cp \psi, \qquad
            \dfrac{\phi \cp \psi \quad \phi \cp \chi}{\phi \cp \psi \wedge \chi},
          \]
  the \emph{disjunction rules}
          \[
            \phi \cp \bigvee S \quad (\phi \in S), \qquad
            \dfrac{\phi \cp \psi \quad (\text{all } \phi \in S)}{\bigvee S \cp \psi}
          \]
  and \emph{frame distributivity}
          \[
            \phi \wedge \bigvee S \cp \bigvee \{ \phi \wedge \psi \mid \psi \in S \}.
          \]

  Note that these are in fact all schemata.
  We write $\ms{GL}$ for the minimal geometric logic, i.e.~the
  smallest collection of consequence pairs closed under the axioms and rules
  given above. We write $\phi \vdash_{\ms{GL}} \psi$ if the consequence
  pair $\phi \cp \psi$ is in $\ms{GL}$.

  Note that frame distributivity allows us to reduce every formula to a
  disjunction of finite conjunctions of proposition letters. Therefore, modulo
  equivalence, the formulas form a set. A collection $S$ of geometric formulas is
  called \emph{directed} if for every pair $\phi, \psi \in S$ there exists
  $\chi \in S$ such that $\phi \vdash \chi$ and $\psi \vdash \chi$.

  The topological semantics and algebraic semantics of geometric logic are given
  by topological spaces and frames.

\subsection{Frames and spaces}

  A \emph{frame} is a complete lattice $F$ in which for all $a \in F$ and
  $S \subseteq F$ the infinite distributive law holds:
  \begin{equation*}
    a \wedge \bigvee S = \bigvee \{ a \wedge s \mid s \in S \}.
  \end{equation*}
  A \emph{frame homomorphism} is a function between frames that preserves finite
  meets and arbitrary joins.

  For $a, b \in F$ we say that $a$ is \emph{well inside} $b$, notation:
  $a \wi b$, if there is a $c \in F$ such that $c \wedge a = \bot$ and
  $c \vee b = \top$. An element $a \in F$ is called \emph{regular} if
  $a = \bigvee \{ b \in F \mid b \wi a \}$ and a frame is called \emph{regular}
  if all of its elements are regular. The \emph{negation} of $a \in F$ is defined
  as $\nega a = \bigvee \{ b \in F \mid a \wedge b = \bot \}$.
  A frame is said to be \emph{compact} if $\bigvee S = \top$ implies that there
  is a finite subset $S' \subseteq S$ such that $\bigvee S' = \top$.

\begin{lem}\label{lem-wi-nega}
  For all elements $a, b$ in a frame $F$ we have $a \wi b$ iff
  $\nega a \vee b = \top$.
\end{lem}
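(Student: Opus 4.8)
The plan is to prove the two implications separately, in both cases using the element $\nega a$ itself as the relevant witness in the definition of $\wi$.

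For the implication from right to left, I would assume $\nega a \vee b = \top$ and claim that $c := \nega a$ witnesses $a \wi b$. The requirement $c \vee b = \top$ is then exactly the hypothesis, so the only thing to verify is $\nega a \wedge a = \bot$. This is where the frame structure is actually used: by the infinite distributive law,
\[
  \nega a \wedge a
    = \Bigl(\bigvee \{ d \in F \mid a \wedge d = \bot \}\Bigr) \wedge a
    = \bigvee \{ d \wedge a \mid a \wedge d = \bot \}
    = \bot,
\]
since every term of the last join equals $\bot$. Hence $\nega a \wedge a = \bot$ and $\nega a \vee b = \top$, so $a \wi b$.

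For the converse, suppose $a \wi b$, witnessed by some $c \in F$ with $c \wedge a = \bot$ and $c \vee b = \top$. Then $c$ is a member of the set $\{ d \in F \mid a \wedge d = \bot \}$ over which the join defining $\nega a$ is taken, so $c \le \nega a$. Monotonicity of $\vee$ then yields $\top = c \vee b \le \nega a \vee b \le \top$, whence $\nega a \vee b = \top$.

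I do not expect any genuine obstacle here: apart from the one application of the infinite distributive law to compute $\nega a \wedge a = \bot$, the argument is pure order-theoretic bookkeeping (that $\nega a$ is the largest element meeting $a$ to $\bot$, and monotonicity of joins). The main point worth stating clearly in the write-up is precisely that $\nega a$ is simultaneously a valid choice of $c$ and the join of all valid choices of $c$, which is what makes the equivalence go through in both directions.
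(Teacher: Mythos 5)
Your proof is correct and is exactly the standard argument: the paper itself does not spell out a proof but simply cites Johnstone (III.1.1), where the same reasoning appears — $\nega a$ meets $a$ to $\bot$ by the infinite distributive law, and it dominates every witness $c$, so it is a witness iff any witness exists. Nothing further is needed.
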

\begin{proof}
  See~\cite[III1.1]{Joh82}.
\end{proof}

\begin{lem}\label{lem-reg-elt}
  Finite meets and arbitrary joins of regular elements are regular.
\end{lem}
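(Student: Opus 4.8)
The plan is to reduce everything to the elementary inequality $\bigvee \{ b \in F \mid b \wi a \} \le a$, which holds in every frame: if $b \wi a$ with witness $c$ (so $c \wedge b = \bot$ and $c \vee a = \top$), then $b = b \wedge (c \vee a) = (b \wedge c) \vee (b \wedge a) = b \wedge a \le a$. Consequently an element $a$ is regular exactly when the reverse inequality $a \le \bigvee \{ b \mid b \wi a \}$ holds, and this is all that needs to be checked in each case.

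Next I would record two monotonicity properties of the well-inside relation $\wi$. First, if $b \wi a$ and $a \le a'$, then $b \wi a'$, since the same witness $c$ still satisfies $c \vee a' \ge c \vee a = \top$. Second, if $b_1 \wi a_1$ and $b_2 \wi a_2$, witnessed by $c_1$ and $c_2$ respectively, then $b_1 \wedge b_2 \wi a_1 \wedge a_2$, witnessed by $c_1 \vee c_2$: on the one hand $(c_1 \vee c_2) \wedge b_1 \wedge b_2 \le (c_1 \wedge b_1) \vee (c_2 \wedge b_2) = \bot$, and on the other, using the frame law, $(c_1 \vee c_2) \vee (a_1 \wedge a_2) = (c_1 \vee c_2 \vee a_1) \wedge (c_1 \vee c_2 \vee a_2) \ge (c_1 \vee a_1) \wedge (c_2 \vee a_2) = \top$.

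For arbitrary joins, let $(a_j)_{j \in J}$ be regular. By the first monotonicity property, $b \wi a_j$ implies $b \wi \bigvee_{j} a_j$, so
\[
  \bigvee_{j \in J} a_j = \bigvee_{j \in J} \bigvee \{ b \mid b \wi a_j \} \le \bigvee \{ b \mid b \wi \textstyle\bigvee_{j} a_j \} \le \bigvee_{j \in J} a_j,
\]
which gives regularity of $\bigvee_{j} a_j$; the case $J = \emptyset$ is the observation that $\bot$ is regular because $\bot \wi \bot$ (witnessed by $\top$). For finite meets it suffices, by induction, to treat the empty meet and binary meets. The empty meet $\top$ is regular because $\top \wi \top$, witnessed by $\bot$. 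For a binary meet of regular elements $a_1, a_2$, applying the infinite distributive law twice rewrites $a_1 \wedge a_2$ as the join of all $b_1 \wedge b_2$ with $b_1 \wi a_1$ and $b_2 \wi a_2$; by the second monotonicity property each such $b_1 \wedge b_2$ is well inside $a_1 \wedge a_2$, so $a_1 \wedge a_2 \le \bigvee \{ b \mid b \wi a_1 \wedge a_2 \}$ and $a_1 \wedge a_2$ is regular. I do not expect a genuine obstacle here; the only point requiring care is the bookkeeping in the binary-meet case—checking that the double use of infinite distributivity is legitimate (it is, since $F$ is a frame) and that every joinand $b_1 \wedge b_2$ is truly well inside $a_1 \wedge a_2$, which is precisely the content of the second monotonicity property.
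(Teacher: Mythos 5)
Your proof is correct and follows essentially the same route as the paper: establish that $\wi$ is preserved by binary meets and is monotone in its second argument, then use frame distributivity to conclude. The only cosmetic difference is that you witness $b_1 \wedge b_2 \wi a_1 \wedge a_2$ explicitly by $c_1 \vee c_2$, whereas the paper uses the negation $\nega(c\wedge d)$ via Lemma~\ref{lem-wi-nega}; your added care about the empty meet and join and about the easy inequality $\bigvee\{b \mid b \wi a\}\le a$ is fine but not a change of method.
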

\begin{proof}
  It is known that $d \leq c \wi a \leq b$ implies $d \wi b$.
  We first show that $c \wi a$ and $d \wi b$ implies $c \wedge d \wi a \wedge b$.
  It is clear that $c \wedge d \wi a$ and $c \wedge d \wi b$. Since
  $\nega(c \wedge d) \vee (a \wedge b) = (\nega(c \wedge d) \vee a) \wedge
  (\nega(c \wedge d) \vee b) = \top \wedge \top = \top$ we know $c \wedge d \wi a
  \wedge b$.

  Now suppose $a$ and $b$ are regular elements, then
  \begin{equation*}
    a \wedge b
      = \bigvee \{ c \mid c \wi a \} \wedge \bigvee \{ d \mid d \wi b \}
      = \bigvee \{ c \wedge d \mid c \wi a, d \wi b \}
      \leq \bigvee \{ c \mid c \wi a \wedge b \}
      \leq a \wedge b,
  \end{equation*}
  so $a \wedge b$ is regular. If $a_i$ is regular for all $i$ in some index set
  $I$, then
  \begin{equation*}
    \bigvee_{i \in I} a_i
      = \bigvee_{i \in I} \Big( \bigvee \{ c \mid c \wi a_i \} \Big)
      \leq \bigvee \Big\{ c \mid c \wi \bigvee_{i \in I} a_i \Big\}
      \leq \bigvee_{i \in I} a_i,
  \end{equation*}
  so an arbitrary join of regular elements is regular.
\end{proof}

  Frames can be presented by generators and relations.

\begin{defi}\label{ch3-def-frm-pres}
  A \emph{presentation} is a pair $\langle G, R \rangle$ where $G$ is a set of
  generators and $R$ is a collection of relations between expressions constructed
  from the generators using arbitrary joins and finite meets.

  Let $F$ be a frame and $\fun{Z}F$ its underlying set. We say that
  $\langle G, R \rangle$ \emph{presents} $F$ if there is an assignment
  $f : G \to \fun{Z}F$ of the generators such that~\ref{eq-presen-1}, \ref{eq-presen-2} and~\ref{eq-presen-3} hold:
  \begin{enumerate}[(i)]
    \item\label{eq-presen-1} The set $\{ f(g) \mid g \in G \}$ generates $F$,
          that is, every element of $F$ can be obtained from
          $\{ f(g) \mid g \in G \}$ using finite meets and arbitrary joins in $F$.
  \end{enumerate}
  The assignment $f$ can be extended to an assignement $\tilde{f}$ for any
  expression $x$ build from the generators in $G$ using $\wedge$ and $\bigvee$.
  We require:
  \begin{enumerate}[(i)]
  \setcounter{enumi}{1}
    \item\label{eq-presen-2} If $x = x'$ is a relation in $R$, then
          $\tilde{f}(x) = \tilde{f}(x')$ in $F$.
    \item\label{eq-presen-3} For any frame $F'$ and assignment $f' : G \to \fun{Z}F'$
          satisfying property~\ref{eq-presen-2} there exists a frame homomorphism
          $h : F \to F'$ such that the diagram
          \begin{equation*}
            \begin{tikzcd}[arrows=-latex]
              G     \arrow[r, "f"]
                    \arrow[dr, "f'" below left]
                & \fun{Z}F
                    \arrow[d, "\fun{Z}h"] \\
                & \fun{Z}F'
            \end{tikzcd}
          \end{equation*}
          commutes.
  \end{enumerate}
\end{defi}

  The frame homomorphism from~\ref{eq-presen-3} is necessarily unique, because
  the image of the generating set $\{ f(g) \mid g \in G \}$ under $h$ is
  determined by the diagram. A detailed account of frame presentations may be
  found in Chapter~4 of~\cite{Vic89}.

  We may also use inequalities when presenting a frame.
  An inequality $x \leq x'$ can simply be viewed as shorthand for
  $x = x \wedge x'$.

\begin{rem}\label{rem-gen-frm-hom}
  We will regularly define a frame homomorphism $F \to F'$ from a frame $F$
  presented by $\langle G, R \rangle$ to some frame $F'$.
  By Definition~\ref{ch3-def-frm-pres} it suffices to give an assignment
  $f' : G \to F'$ such that~\ref{eq-presen-2} holds, because this yields
  a unique frame homomorphism $F \to F'$. By
  abuse of notation, we will denote the unique frame homomorphism $F \to F'$ such
  that the diagram in~\ref{eq-presen-3} commutes with $f'$ as well.
\end{rem}

  The next fact allows us to define a frame by specifying generators and
  relations. A proof can be found in~\cite[Proposition~II2.11]{Joh82}.

\begin{fact}
  Any presentation by generators and relations presents a unique frame.
\end{fact}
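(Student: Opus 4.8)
The plan is to build the presented frame as a quotient of a free frame and then extract existence and uniqueness from universal properties. First I would invoke (or construct) the free frame $\mathbb{F}_G$ on the set $G$ of generators, together with an insertion $\eta : G \to \fun{Z}\mathbb{F}_G$ through which every function from $G$ into the underlying set of a frame factors uniquely as a frame homomorphism out of $\mathbb{F}_G$. Concretely one can take $\mathbb{F}_G$ to be the frame of down-sets of the poset $\fun{P}_{\mathrm{fin}}G$ of finite subsets of $G$ ordered by reverse inclusion (meets given by union, top by $\emptyset$): this is the free meet-semilattice-with-top on $G$, and the down-set frame of a meet-semilattice is the free frame on it, with $\eta(g) = \{\, A \in \fun{P}_{\mathrm{fin}}G \mid g \in A \,\}$. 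Extending $\eta$ to the canonical interpretation $\tilde\eta$ of formal $\wedge$/$\bigvee$-expressions over $G$, each relation $x = x'$ in $R$ yields a pair $(\tilde\eta(x), \tilde\eta(x'))$ of elements of $\mathbb{F}_G$; call this set of pairs $\Theta_R$.

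Next I would force these identifications by passing to a quotient. Frame congruences on a fixed frame are closed under arbitrary intersections, so there is a least frame congruence $\theta \supseteq \Theta_R$; equivalently one takes the least nucleus $j$ on $\mathbb{F}_G$ satisfying $j(\tilde\eta(x)) = j(\tilde\eta(x'))$ for every relation and sets $F := (\mathbb{F}_G)_j$, the frame of $j$-fixed points. Either way one obtains a frame $F$ and a surjective frame homomorphism $q : \mathbb{F}_G \to F$ that is universal among frame homomorphisms out of $\mathbb{F}_G$ identifying both components of each pair in $\Theta_R$. Set $f := \fun{Z}q \circ \eta$.

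It then remains to check the three clauses of Definition~\ref{ch3-def-frm-pres} and uniqueness. Clause~\ref{eq-presen-1} is immediate since $\eta(G)$ generates $\mathbb{F}_G$ and $q$ is onto; clause~\ref{eq-presen-2} holds because $\tilde f = \fun{Z}q \circ \tilde\eta$ and $q$ identifies $\tilde\eta(x)$ with $\tilde\eta(x')$. For clause~\ref{eq-presen-3}, an assignment $f' : G \to \fun{Z}F'$ satisfying~\ref{eq-presen-2} extends by freeness to a unique frame homomorphism $g : \mathbb{F}_G \to F'$ with $\fun{Z}g \circ \eta = f'$; since $f'$ satisfies~\ref{eq-presen-2}, $g$ identifies the components of every pair in $\Theta_R$, so it factors uniquely through $q$ as a frame homomorphism $h : F \to F'$ with $\fun{Z}h \circ f = f'$. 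Uniqueness of the presented frame is then the usual argument: two presentations of $\langle G, R \rangle$ induce, via~\ref{eq-presen-3}, mutually inverse generator-preserving frame homomorphisms, the composites being the identities by the uniqueness remark following Definition~\ref{ch3-def-frm-pres}.

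The only step with real content is the existence of the quotient \emph{as a frame}, i.e.\ that forcing a collection of relations does not break the infinite distributive law. This is precisely what the nucleus construction secures (or, more abstractly, the fact that $\cat{Frm}$ arises as a variety of infinitary algebras and is hence closed under quotients); everything else is bookkeeping with universal properties. I would therefore expect the bulk of the write-up to be the verification that the least nucleus forcing $\Theta_R$ exists and that its fixpoint frame carries the stated universal property — both standard, and spelled out in detail in \cite{Joh82}.
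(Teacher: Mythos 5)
Your construction is correct and is essentially the standard argument that the paper itself defers to (\cite[Proposition II2.11]{Joh82}): free frame on the generators via down-sets of the free meet-semilattice, quotient by the least congruence (equivalently, nucleus) forcing the relations, and then read off the universal property. The one step you rightly flag as having content — that the quotient exists \emph{as a frame} — is exactly what the nucleus/infinitary-variety argument in Johnstone supplies, so nothing is missing.
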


  The collection of open sets of a topological space $\topo{X}$ forms a frame,
  denoted $\fun{opn}\topo{X}$. A continuous map $f : \topo{X} \to \topo{X}'$
  induces $\fun{opn} f = f^{-1} : \fun{opn}\topo{X}' \to \fun{opn}\topo{X}$ and
  with this definition $\fun{opn}$ is a contravariant functor $\cat{Top} \to
  \cat{Frm}$. A frame is called \emph{spatial} if it isomorphic to $\fun{opn}
  \topo{X}$ for some topological space $\topo{X}$.

  A \emph{point} of a frame $F$ is a frame homomorphism $p : F \to 2$, with
  $2 = \{ \top, \bot \}$ the two-element frame. Let $\fun{pt} F$ be the
  collection of points of $F$ endowed with the topology $\{ \tilde{a} \mid a \in
  F \}$, where $\tilde{a} = \{ p \in \fun{pt} F \mid p(a) = \top \}$. For a frame
  homomorphism $f : F \to F'$ define $\fun{pt} f : \fun{pt} F' \to \fun{pt} F$ by
  $p \mapsto p \circ f$. The assignment $\fun{pt}$ defines a contravariant functor
  $\cat{Frm} \to \cat{Top}$.
  A topological space that arises as the space of points of a lattice is called
  \emph{sober}. The \emph{sobrification} of a topological space $\topo{X}$ is
  $\fun{pt}(\fun{opn}\topo{X})$.

  We denote by $\cat{Sob}$ and $\cat{KSob}$ the full subcategories of $\cat{Top}$
  whose objects are sober spaces and compact sober spaces, respectively. Where
  $\cat{Frm}$ is the category of frames and frame homomorphisms, $\cat{SFrm}$,
  $\cat{KSFrm}$ and $\cat{KRFrm}$ are the full subcategories of $\cat{Frm}$ whose
  objects are spatial frames, compact spatial frames and compact regular frames,
  respectively. The functor $\fun{Z} : \cat{Frm} \to \cat{Set}$ is the forgetful
  functor sending a frame to the underlying set, and restricts to every
  subcategory of $\cat{Frm}$. Note that $\fun{\Omega} = \fun{Z} \circ \fun{opn}$.

\begin{fact}\label{fact-pt-opn}
  The functor $\fun{pt}$ is dually adjoint to $\fun{opn}$.
  This adjunction restricts to a duality between the category of spatial frames
  and the category of sober spaces,
  \begin{equation*}
    \cat{SFrm} \equiv \cat{Sob}^{\op}.
  \end{equation*}
  This duality restricts to the dualities
  \begin{equation*}
    \cat{KSFrm} \equiv \cat{KSob}^{\op}
  \end{equation*}
  and
  \begin{equation*}
    \cat{KRFrm} \equiv \cat{KHaus}^{\op}.
  \end{equation*}
\end{fact}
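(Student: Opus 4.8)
The plan is to exhibit the unit and counit of the (dual) adjunction $\fun{opn} \dashv \fun{pt}$ explicitly and then to read off the subcategories of objects on which they become isomorphisms.

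For a frame $F$ I would set $\eta_F \colon F \to \fun{opn}(\fun{pt} F)$, $a \mapsto \tilde a$, and for a space $\topo{X}$ I would set $\epsilon_\topo{X} \colon \topo{X} \to \fun{pt}(\fun{opn}\topo{X})$, $x \mapsto \hat x$, where $\hat x \colon \fun{opn}\topo{X} \to 2$ is the frame homomorphism with $\hat x(U) = \top$ exactly when $x \in U$. Because every point $p \colon F \to 2$ preserves finite meets and arbitrary joins we get $\widetilde{a \wedge b} = \tilde a \cap \tilde b$ and $\widetilde{\bigvee S} = \bigcup_{s \in S} \tilde s$, so $\eta_F$ is a frame homomorphism; and $\epsilon_\topo{X}$ is continuous since the $\epsilon_\topo{X}$-preimage of the basic open $\tilde U$ of $\fun{pt}(\fun{opn}\topo{X})$ is just $U$. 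It then remains to check that $\eta$ and $\epsilon$ are natural and satisfy the two triangle identities $\fun{opn}\epsilon_\topo{X} \circ \eta_{\fun{opn}\topo{X}} = \id$ and $\fun{pt}\eta_F \circ \epsilon_{\fun{pt} F} = \id$; equivalently, that sending a frame homomorphism $h \colon F \to \fun{opn}\topo{X}$ to the continuous map $x \mapsto \hat x \circ h$ defines a bijection $\cat{Frm}(F, \fun{opn}\topo{X}) \to \cat{Top}(\topo{X}, \fun{pt} F)$, natural in $F$ and $\topo{X}$, with inverse $g \mapsto (a \mapsto g^{-1}(\tilde a))$. All of this is a routine unwinding of the definitions.

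For the duality two observations do the work. First, $\eta_F$ is always surjective, since by construction the opens of $\fun{pt} F$ are precisely the sets $\tilde a$; hence $\eta_F$ is an isomorphism iff it is injective, and injectivity of $\eta_F$ is exactly the assertion that $F$ is spatial. Second, $\fun{pt} F$ is always sober --- one proves $\epsilon_{\fun{pt} F}$ surjective by restricting a point of $\fun{opn}(\fun{pt} F)$ along $\eta_F$, and injective from the triangle identity --- so $\epsilon_\topo{X}$ is an isomorphism iff $\topo{X}$ is sober. Since a dual adjunction restricts to a dual equivalence between the full subcategories of objects at which the unit, respectively the counit, is invertible, and since $\fun{opn}$ always lands in spatial frames while $\fun{pt}$ always lands in sober spaces, we obtain $\cat{SFrm} \equiv \cat{Sob}^{\op}$. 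To descend to $\cat{KSFrm} \equiv \cat{KSob}^{\op}$ it is enough to observe that $\fun{opn}$ and $\fun{pt}$ identify topological compactness of a space with lattice-theoretic compactness of its frame of opens: for a space $\topo{X}$, $\fun{opn}\topo{X}$ is a compact frame iff $\topo{X}$ is a compact space (both say every open cover of $\topo{X}$ has a finite subcover), and for a spatial frame $F$ one transports a cover $\bigcup_i \tilde{a_i} = \fun{pt} F$ of $\fun{pt} F$ to the equation $\bigvee_i a_i = \top$ in $F$ using injectivity of $\eta_F$.

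The last claim, $\cat{KRFrm} \equiv \cat{KHaus}^{\op}$, is the substantial one and I expect it to be the main obstacle, since it rests on the nontrivial theorem that every compact regular frame is spatial (the frame-theoretic core of the representation of compact Hausdorff spaces, depending on a form of the prime ideal theorem). Granting that, one uses Lemma~\ref{lem-wi-nega} --- i.e.\ $a \wi b$ iff $\nega a \vee b = \top$ --- to match regularity of a spatial frame $F$ with the Hausdorff axiom on $\fun{pt} F$: given $p \neq q$ with, say, $p(a) = \top \neq q(a)$, writing $a$ as a join of elements well inside it yields, through the complements witnessing $\wi$, disjoint opens separating $p$ and $q$; conversely the frame of opens of a compact Hausdorff space is regular. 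As every compact Hausdorff space is sober ($T_2$, hence $T_0$, with all irreducible closed sets singletons) and thus sits in $\cat{KSob}$ with compact regular frame of opens, the equivalence $\cat{SFrm} \equiv \cat{Sob}^{\op}$ restricts to $\cat{KRFrm} \equiv \cat{KHaus}^{\op}$. Since all three dualities are classical, one may alternatively just invoke \cite{Joh82}.
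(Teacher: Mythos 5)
Your proposal is correct, but note that the paper does not actually prove this statement: it is labelled a \emph{Fact} and the text immediately below it defers the proof to Section~C1.2 of \cite{Joh02}. What you have written is precisely the standard argument one finds there (and in \cite{Joh82}): exhibit the comparison maps $\eta_F\colon a\mapsto\tilde a$ and $\epsilon_{\topo{X}}\colon x\mapsto\hat x$, observe that $\eta_F$ is always surjective so that it is invertible exactly on the spatial frames while $\epsilon_{\topo{X}}$ is invertible exactly on the sober spaces, and restrict the dual adjunction to its fixed points; then match compactness of a space with compactness of its open-set frame, and use regularity (via Lemma~\ref{lem-wi-nega}) to recover the Hausdorff property of $\fun{pt}F$. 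You also correctly isolate the one genuinely non-formal ingredient, namely that every compact regular frame is spatial, which needs a choice principle and cannot be obtained by the fixed-point formalism alone. Two very minor points worth making explicit if you were to write this out in full: the paper's definition of \emph{sober} is ``arises as $\fun{pt}$ of some frame,'' so your characterisation via invertibility of $\epsilon_{\topo{X}}$ should be reconciled with that definition (this is exactly your observation that $\epsilon_{\fun{pt}F}$ is always an isomorphism); and similarly ``$\eta_F$ injective'' should be reconciled with the paper's definition of spatial as ``isomorphic to some $\fun{opn}\topo{X}$,'' which takes one extra line (points of the form $\hat x$ separate opens). Neither is a gap, just bookkeeping.
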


  For a more thorough exposition of frames and spaces, and a proof of the
  statements in Fact~\ref{fact-pt-opn}, we refer to Section C1.2 of~\cite{Joh02}.
  We explicitly mention one isomorphism which is part of this duality, for we
  will encounter it later on.

\begin{rem}\label{rem-isbell-iso}
  Let $\topo{X}$ be a sober space. Then Fact~\ref{fact-pt-opn} entails that
  there is an isomorphism $\topo{X} \to \fun{pt}(\fun{opn}\topo{X})$. This
  isomorphism is given by $x \mapsto p_x$, where $p_x$ is the point given by
  \begin{equation*}
    p_x
      : \fun{opn}\topo{X} \to 2
      : \left\{\begin{array}{ll}
          a \mapsto \top &\text{ if $x \in a$} \\
          a \mapsto \bot &\text{ if $x \notin a$}
        \end{array}\right.
  \end{equation*}
for all $x \in \topo{X}$ and $a \in \fun{\Omega}\topo{X}$.
\end{rem}

\section{Logic for topological coalgebras}\label{sec:logic}

  Although not all of our results can be proved for every full subcategory of
  $\cat{Top}$, we will give the basic definitions in full generality. To this
  end, we let $\cat{C}$ be some full subcategory of $\cat{Top}$ and define
  coalgebraic logic with $\cat{C}$ as base category.
  In particular $\cat{C} =
  \cat{KHaus}$ and $\cat{C} = \cat{Sob}$ will be of interest. Throughout this
  section $\fun{T}$ is an arbitrary endofunctor on $\cat{C}$.
  Recall that $\fun{\Omega} : \cat{Top} \to \cat{Set}$ sends a topological
  spaces to its set of opens, while $\fun{opn} : \cat{Top} \to \cat{Frm}$ takes
  a space to its collection of opens viewed as a frame.
  Also, recall that $\Phi$ is an arbitrary but fixed set of proposition letters.
  We begin with defining the topological version of a predicate
  lifting, called an \emph{open} predicate lifting.

\subsection{Open predicate liftings}
\begin{defi}\label{def:open-pred-lift}
  An \emph{open predicate lifting} for $\fun{T}$ is a natural transformation
  \begin{equation*}
    \lambda : \fun{\Omega}^n \to \fun{\Omega} \circ \fun{T}.
  \end{equation*}
  An open predicate lifting is called \emph{monotone in its $i$-th argument}
  if for every $\topo{X} \in \cat{C}$ and all $a_1, \ldots, a_n, b \in
  \fun{\Omega}\topo{X}$ we have $\lambda_{\topo{X}}(a_1, \ldots, a_i, \ldots, a_n)
  \subseteq \lambda_{\topo{X}}(a_1, \ldots,a_i \cup b, \ldots, a_n)$, and
  \emph{monotone} if it is monotone in every argument.
  It is called \emph{Scott-continuous} in its $i$-th argument if for every
  $\topo{X} \in \cat{C}$ and every directed set $A \subseteq \fun{\Omega}\topo{X}$
  we have
  \[
    \lambda_{\topo{X}}(a_1, \ldots, \bigcup A, \ldots, a_n) =
    \bigcup_{b \in A}\lambda_{\topo{X}}(a_1, \ldots, b, \ldots, a_n)
  \]
  and \emph{Scott-continuous} if it is Scott-continuous in every argument.

  A collection of open predicate liftings for $\fun{T}$ is called a
  \emph{geometric modal signature} for $\fun{T}$.
  A geometric modal signature for a functor $\fun{T}$ is called \emph{monotone}
  if every open predicate lifting in it is monotone, \emph{Scott-continuous} if
  every open predicate lifting in it is Scott-continuous, and \emph{characteristic}
  if for every topological space $\topo{X}$ in $\cat{C}$ the collection
  \[
    \{ \lambda_{\topo{X}}(a_1, \ldots, a_n) \mid \lambda \in \Lambda
        \text{ $n$-ary}, a_i \in \fun{\Omega}\topo{X} \}
  \]
  is a sub-base for the
  topology on $\fun{T}\topo{X}$.
\end{defi}

\begin{rem}
  Using the fact that for any two (open) sets $a, b$ the set $\{ a, a \cup b \}$
  is directed, it is easy to see that Scott-continuity implies monotonicity.

  Scott-continuity will play a r\^{o}le in Section~\ref{sec-final}, where it is
  used to show that the collection of formulas modulo (semantic) equivalence is
  a set, rather than a proper class.
\end{rem}

  Let $\Sier$ be the Sierpinski space, i.e.~the two-element set $2 = \{ 0, 1 \}$
  topologised by $\{ \emptyset, \{ 1 \}, 2 \}$. For a topological space $\topo{X}$
  and $a \subseteq \fun{U}\topo{X}$ let $\chi_a : \topo{X} \to \Sier$ be the
  characteristic map (i.e.~$\chi_a(x) = 1$ iff $x \in a$). Note that $\chi_a$ is
  continuous if and only if $a \in \fun{\Omega}\topo{X}$. Analogously to
  predicate liftings for $\cat{Set}$-functors~\cite[Proposition 43]{Sch05}, one
  can classify $n$-ary predicate liftings as open subsets of $\fun{T}\Sier^n$.
  This elucidates the analogy with predicate liftings for $\cat{Set}$-functors.

\begin{prop}\label{prop-pred-lift-sier}
  Suppose $\Sier \in \cat{C}$, then there is a bijective correspondence between
  $n$-ary open predicate liftings and elements of $\fun{\Omega T}\Sier^n$. This
  correspondence is given as follows: To an open predicate lifting $\lambda$
  assign the set
  $\lambda_{\Sier^n}(\pi_1^{-1}(\{ 1 \}), \ldots, \pi_n^{-1}(\{ 1 \}))
  \in \fun{\Omega T}\Sier^n$, where $\pi_i : \Sier^n \to \Sier$ is the $i$-th
  projection, and conversely, for $c \in \fun{\Omega T}\Sier^n$ define
  $\lambda^c : \fun{\Omega}^n \to \fun{\Omega T}$ by
  $\lambda^c_{\topo{X}}(a_1, \ldots, a_n) = (\fun{T}\langle \chi_{a_1},
  \ldots, \chi_{a_n}\rangle)^{-1}(c)$.
\end{prop}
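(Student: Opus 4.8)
The plan is to set up the two maps $\lambda \mapsto c_\lambda$ and $c \mapsto \lambda^c$ explicitly and show they are mutually inverse; the naturality squares do all the real work.

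First I would fix the description of the assignment in the forward direction: given an $n$-ary open predicate lifting $\lambda : \fun{\Omega}^n \to \fun{\Omega}\circ\fun{T}$, set $c_\lambda := \lambda_{\Sier^n}(\pi_1^{-1}(\{1\}),\ldots,\pi_n^{-1}(\{1\})) \in \fun{\Omega T}\Sier^n$. This is well-defined since each $\pi_i^{-1}(\{1\})$ is open in $\Sier^n$ (the $\pi_i$ being continuous), and $\lambda_{\Sier^n}$ outputs an open of $\fun{T}\Sier^n$. In the other direction, given $c \in \fun{\Omega T}\Sier^n$, define $\lambda^c_{\topo{X}}(a_1,\ldots,a_n) := (\fun{T}\langle \chi_{a_1},\ldots,\chi_{a_n}\rangle)^{-1}(c)$; here each $\chi_{a_i} : \topo{X}\to\Sier$ is continuous because $a_i$ is open, so the tuple $\langle \chi_{a_1},\ldots,\chi_{a_n}\rangle : \topo{X}\to\Sier^n$ is continuous, $\fun{T}$ of it is continuous, and the preimage of the open $c$ is open — so $\lambda^c_{\topo{X}}$ lands in $\fun{\Omega T}\topo{X}$ as required.

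The first substantive step is to verify that $\lambda^c$ is actually a natural transformation $\fun{\Omega}^n \to \fun{\Omega}\circ\fun{T}$. Given a continuous $f : \topo{X}\to\topo{Y}$ and opens $b_i \in \fun{\Omega}\topo{Y}$, one must check $\lambda^c_{\topo{X}}(f^{-1}b_1,\ldots,f^{-1}b_n) = (\fun{T}f)^{-1}\lambda^c_{\topo{Y}}(b_1,\ldots,b_n)$. This reduces to the pointwise identity $\chi_{f^{-1}b_i} = \chi_{b_i}\circ f$, hence $\langle\chi_{f^{-1}b_1},\ldots,\chi_{f^{-1}b_n}\rangle = \langle\chi_{b_1},\ldots,\chi_{b_n}\rangle\circ f$; applying $\fun{T}$ and taking preimages of $c$ through functoriality of $\fun{T}$ and of preimage gives exactly the naturality square. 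The second substantive step is the two round-trip computations. For $\lambda^{c_\lambda} = \lambda$: evaluate $\lambda^{c_\lambda}_{\topo{X}}(a_1,\ldots,a_n) = (\fun{T}\langle\chi_{a_1},\ldots,\chi_{a_n}\rangle)^{-1}(\lambda_{\Sier^n}(\pi_1^{-1}\{1\},\ldots,\pi_n^{-1}\{1\}))$; now apply naturality of $\lambda$ along the map $\langle\chi_{a_1},\ldots,\chi_{a_n}\rangle : \topo{X}\to\Sier^n$, using that $\langle\chi_{a_1},\ldots,\chi_{a_n}\rangle^{-1}(\pi_i^{-1}\{1\}) = \chi_{a_i}^{-1}(\{1\}) = a_i$, to conclude the expression equals $\lambda_{\topo{X}}(a_1,\ldots,a_n)$. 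For $c_{\lambda^c} = c$: compute $c_{\lambda^c} = \lambda^c_{\Sier^n}(\pi_1^{-1}\{1\},\ldots,\pi_n^{-1}\{1\}) = (\fun{T}\langle\chi_{\pi_1^{-1}\{1\}},\ldots,\chi_{\pi_n^{-1}\{1\}}\rangle)^{-1}(c)$, and observe that $\chi_{\pi_i^{-1}\{1\}} = \pi_i : \Sier^n\to\Sier$, so the tuple $\langle\chi_{\pi_1^{-1}\{1\}},\ldots,\chi_{\pi_n^{-1}\{1\}}\rangle = \langle\pi_1,\ldots,\pi_n\rangle = \id_{\Sier^n}$; hence the whole thing is $(\fun{T}\,\id_{\Sier^n})^{-1}(c) = \id^{-1}(c) = c$.

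I expect the main obstacle to be a bookkeeping one rather than a conceptual one: one must be scrupulous about the identification $\chi_{\pi_i^{-1}\{1\}} = \pi_i$ and $\langle\pi_1,\ldots,\pi_n\rangle = \id_{\Sier^n}$, which is exactly where the specific topology $\{\emptyset,\{1\},2\}$ on $\Sier$ is used — it is what makes $\{1\}$ open (so $\pi_i^{-1}\{1\}$ is a legitimate open argument) and what makes a continuous map $\topo{X}\to\Sier$ the same thing as an open subset of $\topo{X}$ via its characteristic function. The hypothesis $\Sier\in\cat{C}$ is needed precisely so that $\Sier^n$ (a finite product, hence still in $\cat{C}$ if $\cat{C}$ is closed under the relevant limits; in the cases of interest one checks this directly) is an object on which $\fun{T}$ and the liftings can be evaluated. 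Everything else is the standard Yoneda-style argument: an open predicate lifting is determined by its value on the "generic" tuple of opens over the classifying object $\Sier^n$, and that value can be anything open.
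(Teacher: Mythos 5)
Your proof is correct and is exactly the Yoneda-style argument that the paper alludes to by citing the analogous classification of predicate liftings for $\cat{Set}$-functors; the paper itself omits the verification, and your two round-trip computations plus the naturality check for $\lambda^c$ supply precisely what is needed. You also rightly flag the only loose end, namely that the statement tacitly requires $\Sier^n$ (not just $\Sier$) to lie in $\cat{C}$ so that $\fun{T}$ can be applied to it and to the tupling maps $\langle \chi_{a_1}, \ldots, \chi_{a_n}\rangle$.
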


  Furthermore, there is a bijective correspondence between open
  predicate liftings and continuous functions $\fun{T}\Sier^n \to \Sier$.
  This is established by identifying elements of $\fun{\Omega}\fun{T}\Sier^n$
  with their characteristic map $\fun{T}\Sier^n \to \Sier$.
  This view on predicate liftings has been investigated in~\cite[Section~7]{BalKurVel15}.

\begin{defi}\label{def:language}
  The \emph{language} induced by a geometric modal signature $\Lambda$ is the
  collection $\GML(\Phi, \Lambda)$ of formulas defined by the grammar
  \begin{equation*}
    \phi ::= \top \mid p
                  \mid \phi_1 \wedge \phi_2
                  \mid \bigvee_{i \in I} \phi_i
                  \mid \heartsuit^{\lambda}(\phi_1, \ldots, \phi_n),
  \end{equation*}
  where $p$ ranges over the set $\Phi$ of proposition letters, $I$ is some index
  set, and $\lambda \in \Lambda$ is $n$-ary. Abbreviate $\bot := \bigvee \emptyset$.
  We call a formula in $\GML(\Phi, \Lambda)$ \emph{coherent} if it does not involve any
  infinite disjunctions.
\end{defi}

\subsection{Interpretation and examples}
  The language $\GML(\Phi, \Lambda)$ is interpreted in so-called geometric
  $\fun{T}$-models.

\begin{defi}
  A \emph{geometric $\fun{T}$-model} is a triple $\mf{X} = (\topo{X}, \gamma, V)$
  where $(\topo{X}, \gamma)$ is a $\fun{T}$-coalgebra and
  $V : \Phi \to \fun{\Omega}\topo{X}$ is a valuation of the proposition letters.
  A map $f : \topo{X} \to \topo{X}'$ is a \emph{geometric $\fun{T}$-model morphism}
  from $(\topo{X}, \gamma, V)$ to $(\topo{X}', \gamma', V')$ if $f$ is a coalgebra
  morphism between the underlying coalgebras and $V = f^{-1} \circ V'$. The
  collection of geometric $\fun{T}$-models and geometric $\fun{T}$-model morphisms
  forms a category, which we denote by $\cat{Mod}(\fun{T})$.
\end{defi}

\begin{defi}\label{def-gml-sem}
  The \emph{semantics} of $\phi \in \GML(\Phi, \Lambda)$ on a geometric $\fun{T}$-model
  $\mf{X} = (\topo{X}, \gamma, V)$ is given recursively by
  \begin{align*}
    &\llb \top \rrb^{\mf{X}} = X, \quad
      \llb p \rrb^{\mf{X}} = V(p), \quad
      \llb \phi \wedge \psi \rrb^{\mf{X}}
        = \llb \phi \rrb^{\mf{X}} \cap \llb \psi \rrb^{\mf{X}}, \quad
      \llb \bigvee_{i \in I} \phi_i \rrb^{\mf{X}} = \bigcup_{i \in I} \llb \phi_i \rrb^{\mf{X}}, \\
    &\llb \heartsuit^{\lambda}(\phi_1, \ldots, \phi_n) \rrb^{\mf{X}}
        = \gamma^{-1}( \lambda_{\topo{X}}(\llb \phi_1 \rrb^{\mf{X}}, \ldots,
        \llb \phi_n \rrb^{\mf{X}})).
  \end{align*}
  We write $\mf{X}, x \Vdash \phi$ iff $x \in \llb \phi \rrb^{\mf{X}}$. Two
  states $x$ and $x'$ are called \emph{modally equivalent} if they satisfy the
  same formulas, notation: $x \modeq x'$.
  We say that $\phi$ is a \emph{semantic consequence} of $\psi$ in $\cat{Mod}(\fun{T})$,
  notation: $\phi \Vdash_{\fun{T}} \psi$, if $\llb \phi \rrb^{\mf{X}} \subseteq
  \llb \psi \rrb^{\mf{X}}$ for all $\mf{X} \in \cat{Mod}(\fun{T})$.
\end{defi}

  The following proposition shows that morphisms preserve truth. Its proof is
  similar to the proof of Theorem~6.17 in~\cite{Ven17}.

\begin{prop}\label{prop-mor-pres-truth}
  Let $\Lambda$ be a geometric modal signature for $\fun{T}$.
  Let $\mf{X} = (\topo{X}, \gamma, V)$ and $\mf{X}' = (\topo{X}', \gamma', V')$
  be geometric $\fun{T}$-models and let $f : \mf{X} \to \mf{X}'$ be a geometric
  $\fun{T}$-model morphism. Then for all $\phi \in \GML(\Phi, \Lambda)$ and
  $x \in \topo{X}$ we have
  \begin{equation*}
    \mf{X}, x \Vdash \phi \quad\text{iff}\quad \mf{X}', f(x) \Vdash \phi.
  \end{equation*}
\end{prop}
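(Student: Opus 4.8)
The plan is to prove the statement by induction on the structure of $\phi \in \GML(\Phi, \Lambda)$. The key observation is that, since $f$ is a geometric $\fun{T}$-model morphism, it is in particular a continuous map, and the defining conditions $f^{-1} \circ V' = V$ and $\gamma' \circ f = \fun{T}f \circ \gamma$ will be used to handle the base case of proposition letters and the modal case, respectively. Concretely, it suffices to show the stronger statement that $\llb \phi \rrb^{\mf{X}} = f^{-1}(\llb \phi \rrb^{\mf{X}'})$ for all $\phi$, since $\mf{X}, x \Vdash \phi$ iff $x \in \llb \phi \rrb^{\mf{X}}$ iff $f(x) \in \llb \phi \rrb^{\mf{X}'}$ iff $\mf{X}', f(x) \Vdash \phi$.

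First I would dispatch the easy cases. For $\phi = \top$ we have $\llb \top \rrb^{\mf{X}} = X = f^{-1}(X') = f^{-1}(\llb \top \rrb^{\mf{X}'})$. For $\phi = p$ we have $\llb p \rrb^{\mf{X}} = V(p) = f^{-1}(V'(p)) = f^{-1}(\llb p \rrb^{\mf{X}'})$ by the condition $f^{-1} \circ V' = V$. For conjunctions and arbitrary disjunctions, the claim follows from the induction hypothesis together with the fact that preimage commutes with intersections and arbitrary unions: $f^{-1}(\llb \phi_1 \rrb^{\mf{X}'} \cap \llb \phi_2 \rrb^{\mf{X}'}) = f^{-1}(\llb \phi_1 \rrb^{\mf{X}'}) \cap f^{-1}(\llb \phi_2 \rrb^{\mf{X}'})$ and similarly for $\bigcup_{i \in I}$.

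The main case, and the only one requiring real work, is the modal case $\phi = \heartsuit^{\lambda}(\phi_1, \ldots, \phi_n)$. Here I would compute, writing $a_j = \llb \phi_j \rrb^{\mf{X}'} \in \fun{\Omega}\topo{X}'$ and using the induction hypothesis $\llb \phi_j \rrb^{\mf{X}} = f^{-1}(a_j)$:
\begin{align*}
  \llb \heartsuit^{\lambda}(\phi_1, \ldots, \phi_n) \rrb^{\mf{X}}
    &= \gamma^{-1}(\lambda_{\topo{X}}(f^{-1}(a_1), \ldots, f^{-1}(a_n))) \\
    &= \gamma^{-1}((\fun{\Omega}f)(\lambda_{\topo{X}'}(a_1, \ldots, a_n))) \\
    &= \gamma^{-1}((\fun{T}f)^{-1}(\lambda_{\topo{X}'}(a_1, \ldots, a_n))) \\
    &= (\fun{T}f \circ \gamma)^{-1}(\lambda_{\topo{X}'}(a_1, \ldots, a_n)) \\
    &= (\gamma' \circ f)^{-1}(\lambda_{\topo{X}'}(a_1, \ldots, a_n)) \\
    &= f^{-1}(\gamma'^{-1}(\lambda_{\topo{X}'}(a_1, \ldots, a_n))) \\
    &= f^{-1}(\llb \heartsuit^{\lambda}(\phi_1, \ldots, \phi_n) \rrb^{\mf{X}'}).
\end{align*}
The crucial second step is the naturality of $\lambda : \fun{\Omega}^n \to \fun{\Omega} \circ \fun{T}$ applied to the $\cat{C}$-morphism $f : \topo{X} \to \topo{X}'$, which gives the commuting square $\lambda_{\topo{X}} \circ (\fun{\Omega}f)^n = (\fun{\Omega}\fun{T}f) \circ \lambda_{\topo{X}'}$; since $\fun{\Omega}f = f^{-1}$ and $\fun{\Omega}\fun{T}f = (\fun{T}f)^{-1}$, this is exactly the passage from line one to line three. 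The fourth step uses the coalgebra morphism condition $\fun{T}f \circ \gamma = \gamma' \circ f$. I expect this naturality step to be the heart of the argument — everything else is bookkeeping with preimages — but even it is routine once one unwinds that $\fun{\Omega}$ sends a continuous map to its inverse-image function. This completes the induction and hence the proof.
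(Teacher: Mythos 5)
Your proof is correct and is exactly the standard structural induction that the paper itself defers to (it gives no proof, only the remark that the argument is "similar to the proof of theorem 6.17 in \cite{Ven17}"): the strengthened claim $\llb \phi \rrb^{\mf{X}} = f^{-1}(\llb \phi \rrb^{\mf{X}'})$, with the valuation condition handling atoms, preimages commuting with $\cap$ and $\bigcup$ handling the lattice connectives, and naturality of $\lambda$ combined with $\gamma' \circ f = \fun{T}f \circ \gamma$ handling the modal case. No gaps; the only implicit point worth noting is that the induction also guarantees each $\llb \phi_j \rrb^{\mf{X}'}$ is open, so that $\lambda_{\topo{X}'}$ may legitimately be applied to it.
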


  We state the notion of behavioural equivalence for future reference.

\begin{defi}\label{def-beh-eq}
  Let $\mf{X} = (\topo{X}, \gamma, V)$ and $\mf{X}' = (\topo{X}', \gamma', V')$
  be two geometric $\fun{T}$-models and $x \in \topo{X}$, $x' \in \topo{X}'$ two
  states. We say that $x$ and $x'$ are \emph{behaviourally equivalent} in
  $\cat{Mod}(\fun{T})$, notation: $x \simeq_{\cat{Mod}(\fun{T})} x'$, if there
  exists a geometric $\fun{T}$-model $\mf{Y}$ and $\fun{T}$-model morphisms
  \begin{equation*}
    \begin{tikzcd}
      \mf{X} \arrow[r, "f" above] &\mf{Y} &\mf{X}' \arrow[l, "f'" above]
    \end{tikzcd}
  \end{equation*}
  such that $f(x) = f'(x')$.
\end{defi}

  As an immediate consequence of Proposition~\ref{prop-mor-pres-truth} we find
  that behavioural equivalence implies modal equivalence. We will see in
  Section~\ref{sec:final} that, under mild conditions, the converse is true
  as well.

  Let us give some concrete examples of functors.

\begin{exa}[Trivial functor]\label{exm-functor-F}
  Let $\two = \{ 0, 1 \}$ be topologised by $\{ \emptyset, \{ 0, 1 \} \}$
  (the trivial topology). Define the functor $\fun{F} : \cat{Top} \to \cat{Top}$
  by $\fun{F}\topo{X} = \two$ for every $\topo{X} \in \cat{Top}$ and
  $\fun{F}f = \id_{\two}$, the identity map on $\two$, for every
  continuous function $f$. This is clearly a functor.
  Consider the open predicate lifting
  $\lambda : \fun{\fun{\Omega}} \to \fun{\fun{\Omega}} \circ \fun{F}$ given by
  $\lambda_{\topo{X}}(a) = \fun{U}\two$ for all $a \in \fun{\Omega}\topo{X}$.
  For an $\fun{F}$-model $\mf{X} = (\topo{X}, \gamma, V)$ we then have
  $\mf{X}, x \Vdash \heartsuit^{\lambda}\phi$ iff
  $\gamma(x) \in \lambda(\llb \phi \rrb^{\mf{X}})$ iff
  $\llb \phi \rrb^{\mf{X}} \in \fun{\Omega}\topo{X}$.
  So $\heartsuit^{\lambda} = \top$.
\end{exa}

  Next we have a look at the Vietoris functor on $\cat{KHaus}$. Coalgebras for
  this functor have also been studied in~\cite{BBH15}, where they are used to
  interpret the positive modal logic from~\cite{Dun95,CelJan99}.
  In Section~\ref{sec:mon} we study the example of the \emph{monotone functor},
  which gives rise to monotone modal geometric logic.

\begin{exa}[Vietoris functor]\label{exm-viet}
  For a compact Hausdorff space $\topo{X}$, let $\fun{V}_{\cat{kh}}\topo{X}$
  be the collection of closed subsets of $\topo{X}$ topologized by the subbase
  \begin{equation*}
    \boxdot a := \{ b \in \fun{V}_{\cat{kh}}\topo{X} \mid b \subseteq a \},
    \quad \diamonddot a := \{ b \in \fun{V}_{\cat{kh}}\topo{X} \mid a \cap b
    \neq \emptyset \},
  \end{equation*}
  where $a$ ranges over $\fun{\Omega}\topo{X}$. For a continuous map $f : \topo{X}
  \to \topo{X}'$ define $\fun{V}_{\cat{kh}}f : \fun{V}_{\cat{kh}}\topo{X} \to
  \fun{V}_{\cat{kh}}\topo{X}'$ by $\fun{V}_{\cat{kh}}f(a) = f[a]$. If $\topo{X}$
  is compact Hausdorff, then so is
  $\fun{V}_{\cat{kh}}\topo{X}$~\cite[Theorem~4.9]{Mic51},
  and if $f : \topo{X} \to \topo{X}'$ is a continuous
  map between compact Hausdorff spaces, then $\fun{V}_{\cat{kh}}f$ is well defined
  and continuous~\cite[Lemma~3.8]{KupKurVen04}, so $\fun{V}_{\cat{kh}}$ defines an
  endofunctor on $\cat{KHaus}$.

  Let $\mf{X} = (\topo{X}, \gamma, V)$ be a $\fun{V}_{\cat{kh}}$-model. If we set
  \begin{equation*}
    \lambda^{\Box}_{\topo{X}}
      : \fun{\Omega}\topo{X} \to \fun{\Omega}(\fun{V}_{\cat{kh}}\topo{X})
      : a \mapsto \{ b \in \fun{V}_{\cat{kh}}\topo{X} \mid b \subseteq a \},
  \end{equation*}
  where $\topo{X} \in \cat{Top}$, then we have $\mf{X}, x \Vdash \Box\phi$ iff
  $\gamma(x) \in \lambda^{\Box}_{\topo{X}}(\llb \phi \rrb^{\mf{X}})$ iff
  $\gamma(x) \subseteq \llb \phi \rrb^{\mf{X}}$ iff every successor of $x$
  satisfies $\phi$. Similarly $\lambda^{\Diamond}_{\topo{X}} : \fun{\Omega}\topo{X}
  \to \fun{\Omega} \circ \fun{V}_{\cat{kh}}\topo{X}$, given by
  $\lambda^{\Diamond}_{\topo{X}}(a) = \diamonddot a$ yields the usual semantics
  of the diamond modality.
\end{exa}

\subsection{Strong predicate liftings}
  In Section~\ref{sec-bisim} it turns out to be useful to have a slightly
  stronger notion of open predicate liftings, called strong open predicate
  liftings, as this allows us to prove that behavioural equivalence implies
  so-called $\Lambda$-bisimilarity.
  Whereas the action of open predicate liftings is defined only on open subsets,
  a strong open predicate lifting acts on \emph{every} subset of elements of a
  topological space.
  Recall that $\fun{U} : \cat{Top} \to \cat{Set}$ denotes the forgetful functor.

\begin{defi}
  A \emph{strong open predicate lifting} for $\fun{T} : \cat{C} \to \cat{C}$ is
  a natural transformation
  \[
    \mu : (\breve{\fun{P}} \circ \fun{U})^n
      \to \breve{\fun{P}} \circ \fun{U} \circ \fun{T}
  \]
  such that for all $\topo{X} \in \cat{C}$ and $a_1, \ldots, a_n \in
  \fun{\Omega}\topo{X}$ the set $\mu_{\topo{X}}(a_1, \ldots, a_n)$ is open
  in $\fun{T}\topo{X}$. Monotonicity and Scott-continuity of strong open
  predicate liftings are defined in the standard way.

  We call an open predicate lifting (from Definition~\ref{def:open-pred-lift})
  \emph{extendable} if it is the restriction of some
  strong open predicate lifting and \emph{monotone extendable} if it is the
  restriction of a monotone strong open predicate lifting.
  We call a geometric modal signature $\Lambda$ \emph{(monotone) extendable}
  if all predicate liftings in it are (monotone) extendable.
\end{defi}

  Evidently, every strong open predicate lifting restricts to an open predicate
  lifting, and it is only this weaker notion of open predicate lifting that has
  an effect on the semantics.
  Our notion of strong open predicate lifting is similar to the notion of a
  \emph{topological predicate lifting} for endofunctors on $\cat{Stone}$, which
  was introduced in~\cite{EnqSou17}.

\begin{exa}
  The predicate lifting corresponding to the box modality from Example~\ref{exm-viet}
  is (monotone) extendable, for it is the restriction of $\mu : \fun{U} \to
  \fun{U} \circ \fun{V}_{\cat{kh}}$ given by $\mu_{\topo{X}}(u)
  = \{ b \in \fun{V}_{\cat{kh}}\topo{X} \mid b \subseteq u \}$.
  Likewise, all other predicate liftings from Examples~\ref{exm-functor-F}, \ref{exm-viet} and the monotone functor from Section~\ref{sec:mon} are extendable
  as well.
\end{exa}

  We devote the remainder of this section to investigating strong open predicate
  liftings.
  Recall from Example~\ref{exm-functor-F} that $\two$ denotes the two-element set
  with the trivial topology. We claim that natural transformations
  $\mu : (\breve{\fun{P}} \circ \fun{U})^n \to \breve{\fun{P}} \circ \fun{U}
  \circ \fun{T}$ correspond one-to-one with elements of $\fun{\breve{P}UT}\two$,
  provided $\two \in \cat{C}$: To a natural transformation $\mu$ associate
  the set $\mu_{\two}(p_1^{-1}(\{ 1 \}), \ldots, p_n^{-1}(\{ 1 \}))$, where
  $p_i : \two^n \to \two$ denotes the $i$-th projection.
  Conversely, for $c \in \fun{\breve{P}UT}\two$ define $\mu^c$ by
  $\mu^c_{\topo{X}}(a_1, \ldots, a_n) = (\fun{T}\langle \chi_{a_1}', \ldots,
  \chi_{a_n}' \rangle)^{-1}(c)$, where $\topo{X}$ is a topological space,
  $a \subseteq \fun{U}\topo{X}$ and $\chi_a' : \topo{X} \to \two$ is the
  characteristic map. Note that $\chi'_a$ is continuous regardless of whether $a$
  is open or not, hence $\fun{T}$ acts on all $\chi_a'$. Details of the bijection
  are left to the reader.

\begin{prop}\label{prop-ext-open}
  Let $\fun{T}$ be an endofunctor on $\cat{C}$ and suppose that $\cat{C}$ contains
  the spaces $\two$ and $\Sier$.
  Let $s : \Sier \to \two$ be the identity map and let
  $c \in \fun{\breve{P}UT}\two^n$. The natural transformation $\mu^c$ is a
  strong open predicate lifting if and only if $(\fun{T}s^n)^{-1}(c) \subseteq
  \fun{T}\Sier^n$ is open.
\end{prop}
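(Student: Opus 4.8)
The plan is to work directly from the defining formula $\mu^c_{\topo{X}}(a_1,\dots,a_n) = (\fun{T}\langle \chi_{a_1}',\dots,\chi_{a_n}'\rangle)^{-1}(c)$, taking for granted (as announced just before the statement) that $\mu^c$ is a well-defined natural transformation $(\breve{\fun{P}}\circ\fun{U})^n \to \breve{\fun{P}}\circ\fun{U}\circ\fun{T}$; the only thing at issue is the openness clause, i.e.\ that $\mu^c_{\topo{X}}(a_1,\dots,a_n)$ is open in $\fun{T}\topo{X}$ whenever $\topo{X}\in\cat{C}$ and $a_1,\dots,a_n\in\fun{\Omega}\topo{X}$. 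The key point is that openness of the arguments makes the characteristic maps factor through $\Sier$: for $a_i\in\fun{\Omega}\topo{X}$ the map $\chi_{a_i}:\topo{X}\to\Sier$ is continuous (the observation recorded just before Proposition~\ref{prop-pred-lift-sier}), and $\chi_{a_i}' = s\circ\chi_{a_i}$ since $s$ is the identity on underlying sets. Hence, pairing and applying the functor $\fun{T}$,
\[
  \mu^c_{\topo{X}}(a_1,\dots,a_n)
    = \big(\fun{T}(s^n\circ\langle\chi_{a_1},\dots,\chi_{a_n}\rangle)\big)^{-1}(c)
    = \big(\fun{T}\langle\chi_{a_1},\dots,\chi_{a_n}\rangle\big)^{-1}\big((\fun{T}s^n)^{-1}(c)\big).
\]

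For the ``if'' direction, suppose $(\fun{T}s^n)^{-1}(c)$ is open in $\fun{T}\Sier^n$. Since $\langle\chi_{a_1},\dots,\chi_{a_n}\rangle:\topo{X}\to\Sier^n$ is continuous (a pairing of continuous maps), it is a morphism of $\cat{C}$, so $\fun{T}\langle\chi_{a_1},\dots,\chi_{a_n}\rangle:\fun{T}\topo{X}\to\fun{T}\Sier^n$ is continuous and the right-hand side of the displayed equation is open in $\fun{T}\topo{X}$. As $\topo{X}$ and the open sets $a_i$ were arbitrary, $\mu^c$ satisfies the openness clause and is a strong open predicate lifting.

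For the ``only if'' direction, assume $\mu^c$ is a strong open predicate lifting and instantiate the openness clause at the space $\Sier^n$ --- available here exactly as $\Sier^n$ is used in Proposition~\ref{prop-pred-lift-sier} --- with the opens $a_i := \pi_i^{-1}(\{ 1 \})\in\fun{\Omega}\Sier^n$, where $\pi_i:\Sier^n\to\Sier$ is the $i$-th projection. One checks immediately that $\chi_{a_i} = \pi_i$, whence $\langle\chi_{a_1},\dots,\chi_{a_n}\rangle = \id_{\Sier^n}$ and the displayed equation degenerates to $\mu^c_{\Sier^n}(a_1,\dots,a_n) = (\fun{T}s^n)^{-1}(c)$. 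By the openness clause this set is open in $\fun{T}\Sier^n$, which is what we wanted.

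I do not expect a real obstacle here; the proof is a direct manipulation of preimages combined with functoriality. The two places to be careful are (i) that the factorisation $\chi_a' = s\circ\chi_a$ is legitimate precisely because the argument $a$ is open --- this is where the hypotheses on the arguments in the openness clause get used --- and (ii) that the test opens on $\Sier^n$ are chosen so that $\langle\chi_{a_1},\dots,\chi_{a_n}\rangle$ is genuinely the identity, so that no spurious application of $\fun{T}$ to a non-identity map survives and we recover $(\fun{T}s^n)^{-1}(c)$ verbatim.
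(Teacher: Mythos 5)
Your proof is correct and follows essentially the same route as the paper's: both arguments hinge on the factorisation $\chi'_a = s\circ\chi_a$ for open $a$ (giving the right-to-left direction via continuity of $\fun{T}\chi_a$) and on evaluating the openness clause at the Sierpinski space with the canonical open(s) to recover $(\fun{T}s^n)^{-1}(c)$ for the left-to-right direction. The only difference is presentational: the paper treats $n=1$ and declares the general case similar, whereas you write out the $n$-ary case with the pairing $\langle\chi_{a_1},\dots,\chi_{a_n}\rangle$ explicitly.
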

\begin{proof}
  We give the proof for the case $n = 1$, the general case being similar.
  Left to right follows from the fact that $\{ 1 \}$ is open in $\Sier$, hence
  $\mu_{\Sier}^c(\{ 1 \}) = (\fun{T}\chi'_{\{ 1 \}})^{-1}(c) = (\fun{T}s)^{-1}(c)$
  must be open in $\fun{T}\Sier$.
  For the converse, let $\topo{X}$ be a topological space and
  $a \in \fun{\Omega}\topo{X}$. We need to show that $\mu^c_{\topo{X}}(a)$ is open.
  Since $a$ is open, the characteristic map $\chi_a: \topo{X} \to \Sier$ is
  continuous and hence $\chi'_a = s \circ \chi_a$. We have
  \begin{align*}
    \mu^c_{\topo{X}}(a)
      &= (\fun{T}\chi_a')^{-1}(c) &\text{(Definition of $\mu^c$)} \\
      &= (\fun{T}(s \circ \chi_a))^{-1}(c) &\text{($\chi'_a = s \circ \chi_a$)} \\
      &= (\fun{T}s \circ \fun{T}\chi_a)^{-1}(c) &\text{(Definition of functors)} \\
      &= (\fun{T}\chi_a)^{-1} \circ (\fun{T}s)^{-1}(c). &\text{(Definition of inverse)}
  \end{align*}
  Since $\fun{T}\chi_a$ is continuous and $(\fun{T}s)^{-1}(c)$ is assumed to be
  open in $\fun{T}\Sier$, the set $\mu^c_{\topo{X}}(a)$ is open in
  $\fun{T}\topo{X}$.
\end{proof}

  The following proposition gives two sufficient conditions on $\fun{T}$ for its
  open predicate liftings to be extendable. For a full subcategory $\cat{C}$ of
  $\cat{Top}$ let $\cat{preC}$ denote the category of topological spaces in
  $\cat{C}$ and (not necessarily continuous) functions.

\begin{prop}
  Let $\fun{T}$ be an endofunctor on $\cat{C}$ and suppose $\two, \Sier \in
  \cat{C}$.
  \begin{enumerate}
    \item If $\fun{T}$ preserves injective functions then every open predicate
          lifting for $\fun{T}$ is extendable.
    \item If $\fun{T}$ extends to $\cat{preC}$, then every open predicate
          lifting for $\fun{T}$ is extendable.
  \end{enumerate}
\end{prop}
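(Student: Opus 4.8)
The plan is to use the characterization from Proposition~\ref{prop-ext-open}, which reduces strongness of $\mu^c$ to openness of $(\fun{T}s^n)^{-1}(c) \subseteq \fun{T}\Sier^n$, together with the bijection between open predicate liftings and elements of $\fun{\Omega T}\Sier^n$ from Proposition~\ref{prop-pred-lift-sier}. So let $\lambda$ be an arbitrary open predicate lifting for $\fun{T}$. By Proposition~\ref{prop-pred-lift-sier} it is of the form $\lambda^{c_0}$ for $c_0 = \lambda_{\Sier^n}(\pi_1^{-1}(\{1\}),\ldots,\pi_n^{-1}(\{1\})) \in \fun{\Omega T}\Sier^n$. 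To exhibit $\lambda$ as the restriction of a strong open predicate lifting, I would produce a suitable $c \in \fun{\breve P U T}\two^n$ with $\mu^c$ strong and $\mu^c$ restricting to $\lambda$ on open arguments; by Proposition~\ref{prop-ext-open} the latter amounts to $(\fun{T}s^n)^{-1}(c)$ being open, and the restriction condition will hold provided $(\fun{T}s^n)^{-1}(c) = c_0$, since for open $a_i$ we have $\chi'_{a_i} = s \circ \chi_{a_i}$ and hence $\mu^c_{\topo X}(a_1,\ldots,a_n) = (\fun{T}\langle\chi_{a_1},\ldots,\chi_{a_n}\rangle)^{-1}((\fun{T}s^n)^{-1}(c))$. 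Thus the whole problem collapses to: find $c \subseteq \fun{U T}\two^n$ with $(\fun{T}s^n)^{-1}(c) = c_0$.

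For part~(1), I would use that $s : \Sier \to \two$ is a bijective continuous map, so $s^n : \Sier^n \to \two^n$ is injective (as a function), whence by hypothesis $\fun{T}s^n : \fun{T}\Sier^n \to \fun{T}\two^n$ is injective in $\cat{C}$, and therefore injective as a map of underlying sets (applying $\fun{U}$). Then I would simply take $c := \fun{T}s^n[c_0] \subseteq \fun{U T}\two^n$, the direct image. Because $\fun{T}s^n$ is injective on underlying sets, taking direct image and then preimage recovers the original set: $(\fun{T}s^n)^{-1}(c) = (\fun{T}s^n)^{-1}(\fun{T}s^n[c_0]) = c_0$, which is open by assumption. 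Hence by Proposition~\ref{prop-ext-open}, $\mu^c$ is a strong open predicate lifting, and by the computation above its restriction to open arguments is exactly $\lambda = \lambda^{c_0}$. So $\lambda$ is strong.

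For part~(2), the hypothesis gives an extension $\fun{T} : \cat{preC} \to \cat{preC}$, i.e.\ $\fun{T}$ acts on not-necessarily-continuous maps compatibly with its action on continuous ones. The key point is that $s : \Sier \to \two$ has a (non-continuous) set-theoretic inverse $r : \two \to \Sier$, namely the identity map going the other way, which is a morphism in $\cat{preC}$ with $r \circ s = \id_{\Sier}$ and $s \circ r = \id_{\two}$. Applying the extended $\fun{T}$ (and functoriality in $\cat{preC}$), $\fun{T}s^n$ has the two-sided inverse $\fun{T}r^n$ on underlying sets, so $\fun{T}s^n$ is a bijection $\fun{U T}\Sier^n \to \fun{U T}\two^n$. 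Then the same choice $c := \fun{T}s^n[c_0]$ works: $(\fun{T}s^n)^{-1}(c) = c_0$ is open, so Proposition~\ref{prop-ext-open} again yields that $\mu^c$ is strong and restricts to $\lambda$.

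The routine verification underlying both parts is that the bijection of Proposition~\ref{prop-pred-lift-sier}, the bijection between strong liftings and elements of $\fun{\breve P U T}\two^n$ preceding Proposition~\ref{prop-ext-open}, and the restriction map from strong to open liftings are all mutually compatible under the maps induced by $s$ — i.e.\ that $(\fun{T}s^n)^{-1}$ really does implement ``restrict a strong lifting to an open one'' at the level of their classifying elements; this is exactly the displayed computation $\mu^c_{\topo X}(\vec a) = (\fun{T}\langle\vec\chi_a\rangle)^{-1}((\fun{T}s^n)^{-1}(c))$ for open $\vec a$, using $\chi'_{a_i} = s\circ\chi_{a_i}$ and functoriality, and I would spell it out only briefly. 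The main obstacle, such as it is, is simply making sure that injectivity/bijectivity of $\fun{T}s^n$ is used at the level of underlying \emph{sets} (so that set-theoretic direct image behaves well), which in part~(1) requires noting that $\fun{U}$ preserves injections and in part~(2) requires the non-continuous inverse $r$ to live in $\cat{preC}$; neither is deep, but both are the crux of why the two hypotheses suffice.
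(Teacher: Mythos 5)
Your proof is correct and follows essentially the same route as the paper: classify $\lambda$ by $c_0\in\fun{\Omega T}\Sier^n$, set $c=\fun{T}s^n[c_0]$, use injectivity of $\fun{T}s^n$ to get $(\fun{T}s^n)^{-1}(c)=c_0$, and invoke Proposition~\ref{prop-ext-open}. The only (harmless) variation is in part~(2), where you apply the retraction argument directly to $s$ to conclude $\fun{T}s^n$ is a bijection, whereas the paper shows more generally that extending to $\cat{preC}$ forces $\fun{T}$ to preserve all injections and then reduces to part~(1).
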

\begin{proof}
  For the first item, let $c \in \fun{\Omega T}\Sier^n$ determine the $n$-ary
  open predicate lifting $\lambda^c$. Since $s^n$ is injective, by assumption
  $\fun{T}s^n$ is as well, and hence $c = (\fun{UT}s^n)^{-1}((\fun{UT}s^n)[c])$.
  Proposition~\ref{prop-ext-open} now implies that $\mu^{(\fun{UT}s^n)[c]}$ is a
  strong open predicate lifting. It is easy to see that $\mu^{(\fun{UT}s^n)[c]}$
  extends $\lambda^c$, hence the latter is extendable.

  For the second item we show that, under the assumption, $\fun{T}$ preserves
  injective functions. Let $f : \topo{X} \to \topo{Y}$ be an injective function
  in $\cat{C}$, then there exists a (not necessarily continuous) function
  $g : \topo{Y} \to \topo{X}$ satisfying $g \circ f = \id_{\topo{X}}$. Then
  $\fun{T}g \circ \fun{T}f = \fun{T}(g \circ f) = \fun{T}\id_{\topo{X}}
  = \id_{\fun{T}\topo{X}}$, so $\fun{T}f$ has a (set-theoretic) left-inverse,
  hence is injective.
\end{proof}

  Monotone open predicate lifting (hence also Scott-continuous ones) for an
  endofunctor on $\cat{KHaus}$ are always extendable:

\begin{prop}\label{prop-strong-pred-lift}
  Let $\fun{T}$ be an endofunctor on $\cat{KHaus}$ and $\Lambda$ a monotone
  geometric modal signature for $\fun{T}$. Then $\Lambda$ is monotone extendable.
\end{prop}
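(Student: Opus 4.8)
The plan is to show that every monotone open predicate lifting $\lambda \in \Lambda$ extends to a monotone strong open predicate lifting $\mu$, i.e.\ a natural transformation $(\breve{\fun P} \circ \fun U)^n \to \breve{\fun P} \circ \fun U \circ \fun T$ that acts on \emph{all} subsets, agrees with $\lambda$ on open subsets, is monotone, and sends tuples of opens to opens. The key idea is to exploit the good behaviour of $\cat{KHaus}$: for any space $\topo X \in \cat{KHaus}$ and any subset $a \subseteq \fun U\topo X$ (not necessarily open), the collection of open sets containing $a$ is a \emph{filtered} (codirected) family, and we can define $\mu_{\topo X}(a_1,\dots,a_n)$ by an appropriate intersection/union over open approximations of the $a_i$. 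Concretely, since $\lambda$ is monotone, the natural guess is
\[
  \mu_{\topo X}(a_1, \ldots, a_n) := \bigcap \{ \lambda_{\topo X}(u_1, \ldots, u_n) \mid u_i \in \fun{\Omega}\topo X,\ a_i \subseteq u_i \}.
\]
First I would check that this agrees with $\lambda$ on tuples of opens: if each $a_i$ is open, then $(a_1,\dots,a_n)$ is itself the smallest member of the indexing family, and monotonicity of $\lambda$ gives $\mu_{\topo X}(a_1,\dots,a_n) = \lambda_{\topo X}(a_1,\dots,a_n)$. Monotonicity of $\mu$ is immediate, since enlarging the $a_i$ shrinks the indexing set of the intersection.

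The substantive points are naturality and openness. For naturality, given a continuous $f : \topo X \to \topo Y$ in $\cat{KHaus}$ and subsets $b_i \subseteq \fun U\topo Y$, I must show $(\fun{UT}f)^{-1}(\mu_{\topo Y}(\vec b)) = \mu_{\topo X}(f^{-1}[\vec b])$. The inclusion that follows formally from naturality of $\lambda$ on opens and the fact that $f^{-1}$ of an open containing $b_i$ is an open containing $f^{-1}[b_i]$ handles one direction; the reverse direction is where compactness enters, because not every open $u_i \supseteq f^{-1}[b_i]$ need be of the form $f^{-1}[v_i]$ for an open $v_i \supseteq b_i$. Here I would use normality of compact Hausdorff spaces (and continuity/closedness properties of $f$) to separate $f^{-1}[b_i]$ from the closed complement of $u_i$, pushing forward along $f$ to find a suitable $v_i$; alternatively one can first reduce to the case where $f$ is a continuous surjection with a closed section-like behaviour. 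For openness of $\mu_{\topo X}(\vec a)$ when the $a_i$ are open — which is the requirement in the definition of strong open predicate lifting — note that in that case $\mu_{\topo X}(\vec a) = \lambda_{\topo X}(\vec a)$, which is open by definition of open predicate lifting, so this costs nothing.

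I expect the main obstacle to be the reverse inclusion in the naturality square, i.e.\ showing that open over-approximations of $f^{-1}[b]$ can be "transferred" through $f$. The cleanest route is probably to factor $f : \topo X \to \topo Y$ through its image (a closed, hence compact Hausdorff, subspace of $\topo Y$) as a surjection followed by a closed embedding, handle each factor separately, and for the surjective factor invoke the fact that continuous surjections between compact Hausdorff spaces are closed quotient maps, so that an open $u \supseteq f^{-1}[b]$ has open saturation and $\topo Y \setminus f[\topo X \setminus u]$ is an open set $v$ with $b \subseteq v$ and $f^{-1}[v] \subseteq u$; then monotonicity of $\lambda$ finishes the step. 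If the intersection formula proves awkward to verify naturality for directly, a fallback is to use Proposition~\ref{prop-ext-open}: classify $\lambda$ by an open $c \in \fun{\Omega T}\Sier^n$ and build $\mu$ from a set $c' \in \fun{\breve P U T}\two^n$ with $(\fun T s^n)^{-1}(c') = c$, using monotonicity to extend $c$ across the (continuous, non-open-map) comparison $s : \Sier \to \two$ — monotonicity of $\lambda$ is exactly what guarantees such a $c'$ exists, and Proposition~\ref{prop-ext-open} then delivers strongness for free.
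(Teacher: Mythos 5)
Your proposal is correct and follows essentially the same route as the paper: the same candidate $\mu_{\topo{X}}(\vec a) = \bigcap\{\lambda_{\topo{X}}(\vec u) \mid u_i \text{ open},\ a_i \subseteq u_i\}$, with the reverse inclusion in the naturality square handled exactly as in the paper by taking $v = \topo{Y}\setminus f[\topo{X}\setminus u]$, which is open because continuous maps between compact Hausdorff spaces are closed. The detour through factoring $f$ over its image is unnecessary (the displayed formula works for arbitrary continuous $f$ in $\cat{KHaus}$), but it does no harm.
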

\begin{proof}
  Let $\lambda \in \Lambda$. We need to show that $\lambda$ is the restriction
  of some monotone strong predicate lifting. Define
  \begin{equation*}
    \tilde{\lambda}_{\topo{X}}
      : \breve{\fun{P}}^n\fun{U}\topo{X} \to \breve{\fun{P}}\fun{UT}\topo{X}
      : (b_1, \ldots, b_n) \mapsto \bigcap \{ \lambda_{\topo{X}}(a_1, \ldots, a_n)
        \mid a_i \in \fun{\Omega}\topo{X} \text{ and } a_i \supseteq b_i \}.
  \end{equation*}
  Monotonicity of $\lambda_{\topo{X}}$ ensures $\tilde{\lambda}_{\topo{X}}(a)
  = \lambda_{\topo{X}}(a)$ for all $a \in \fun{\Omega}\topo{X}$ and
  $\tilde{\lambda}$ is monotone by construction. So we only need to show that
  $\tilde{\lambda}$ is indeed a strong open predicate lifting, i.e.~a natural
  transformation $\breve{\fun{P}}^n\fun{U}\topo{X} \to
  \breve{\fun{P}}\fun{UT}\topo{X}$. We assume $\lambda$ to be unary,
  the general case being similar.

  For a continuous map $f : \topo{X} \to \topo{X}'$ between compact Hausdorff
  spaces we need to show that
  $\tilde{\lambda}_{\topo{X}} \circ f^{-1}
  = (\fun{T}f)^{-1} \circ \tilde{\lambda}_{\topo{X}'}$. Since, by naturality
  of $\lambda$, the right hand side is equal to
  $\bigcap \{ \lambda_{\topo{X}}(f^{-1}(a'))
  \mid a' \in \fun{\Omega}\topo{X}' \text{ and } b' \subseteq a' \}$,
  it suffices to show
  \begin{equation}\label{eq1}
    \bigcap \{ \lambda_{\topo{X}}(c) \mid c \in \fun{\Omega}\topo{X}
    \text{ and }  f^{-1}(b') \subseteq c \}
	= \bigcap \{ \lambda_{\topo{X}}(f^{-1}(a')) \mid a' \in \fun{\Omega}\topo{X}'
	\text{ and } b' \subseteq a' \}.
  \end{equation}
  If $a'$ is an open superset of $b'$ then clearly $f^{-1}(b') \subseteq
  f^{-1}(a')$. So every element in the intersection of the right hand side is
  contained in the one on the left hand side and therefore we have $\subseteq$
  in~\eqref{eq1}.
  For the converse, suppose $c \in \fun{\Omega}\topo{X}$ and $f^{-1}(b')
  \subseteq c$. Then the set $a' = X' \setminus f[X \setminus c]$ is open,
  contains $b'$, and satisfies $f^{-1}(b') \subseteq f^{-1}(a') \subseteq c$.
  Therefore $\lambda_{\topo{X}}(f^{-1}(a'))$ is one of the elements in the
  intersection on the left hand side of~\eqref{eq1}. Since
  $\lambda_{\topo{X}}(f^{-1}(a')) \subseteq \lambda_{\topo{X}}(c)$ this shows
  ``$\supseteq$'' in~\eqref{eq1}.
\end{proof}

\section{The monotone neighbourhood functor on \texorpdfstring{$\cat{KHaus}$}{KHaus}}\label{sec:mon}

  In this section we define the monotone neighbourhood functor on $\cat{Frm}$ and
  show that it (individually) preserves regularity and compactness. This functor
  is a variation of the Vietoris Locale~\cite[Section~1]{Joh85}.
  Subsequently, we give a functor on $\cat{KHaus}$ which is dual to the
  restriction of the monotone neighbourhood functor to $\cat{KRFrm}$.

\subsection{The monotone neighbourhood frame}

\begin{defi}\label{def-functor-M}
  For a frame $F$, let $\fun{M}F$ be the frame generated by $\Box a, \Diamond a$,
  where $a$ ranges over $F$, subject to the relations
  \begin{multicols}{2}
    \begin{enumerate}[($M_1$)]
      \item\label{eq-fun-M-1} $\Box (a \wedge b) \leq \Box a$
      \item\label{eq-fun-M-2} $\Box a \wedge \Diamond b = \bot \;
                              \text{ whenever } a \wedge b = \bot$
      \item\label{eq-fun-M-3} $\Box \bigveeup A = \bigveeup
                              \{ \Box a \mid a \in A \}$
      \item\label{eq-fun-M-4} $\Diamond a \leq \Diamond(a \vee b)$
      \item\label{eq-fun-M-5} $\Box a \vee \Diamond b = \top \; \text{ whenever }
                              a \vee b = \top$
      \item\label{eq-fun-M-6} $\Diamond \bigveeup A = \bigveeup \{ \Diamond a
                              \mid a \in A \}$,
    \end{enumerate}
  \end{multicols}
  \noindent
  where $a, b \in F$ and $A$ is a directed subset of $F$.
  For a homomorphism $f : F \to F'$ define $\fun{M}f : \fun{M}F \to \fun{M}F'$ on
  generators by $\Box a \mapsto \Box f(a)$ and $\Diamond a \mapsto \Diamond f(a)$.
  The assignment $\fun{M}$ defines a functor on $\cat{Frm}$.
\end{defi}

  The proof of the following proposition closely resembles that of Proposition~%
  III4.3 in~\cite{Joh82}. In a similar manner one can show that $\fun{M}$
  preserves complete regularity and zero-dimensionaity.

\begin{prop}\label{prop-M-pres-reg}
  If $F$ is a regular frame, then so is $\fun{M}F$.
\end{prop}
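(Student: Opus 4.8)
The plan is to follow the pattern of Johnstone's proof of Proposition~III.4.3 in \cite{Joh82}. The key reduction is this: every element of $\fun{M}F$ is a join of finite meets of the generators $\Box a$ and $\Diamond a$ (this is frame distributivity applied to the presentation, exactly as in the reduction of geometric formulas discussed in Subsection~\ref{subsec:gl}), so by Lemma~\ref{lem-reg-elt} it suffices to prove that each generator $\Box a$ and each generator $\Diamond a$ is a regular element of $\fun{M}F$.

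The core step is the claim that for $a, b \in F$ with $b \wi a$ one has $\Box b \wi \Box a$ and $\Diamond b \wi \Diamond a$ in $\fun{M}F$. Given a witness $c \in F$ for $b \wi a$, i.e.\ $c \wedge b = \bot$ and $c \vee a = \top$, the element $\Diamond c$ witnesses $\Box b \wi \Box a$: relation~\ref{eq-fun-M-2} gives $\Box b \wedge \Diamond c = \bot$ and relation~\ref{eq-fun-M-5} gives $\Box a \vee \Diamond c = \top$; symmetrically, $\Box c$ witnesses $\Diamond b \wi \Diamond a$, again by relations~\ref{eq-fun-M-2} and~\ref{eq-fun-M-5}. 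Now, since $F$ is regular, $a = \bigvee\{ b \in F \mid b \wi a \}$, and this set is directed: it contains $\bot$, and if $b_1 \wi a$ and $b_2 \wi a$ then, by Lemma~\ref{lem-wi-nega} and infinite distributivity, $\nega(b_1 \vee b_2) \vee a = (\nega b_1 \vee a) \wedge (\nega b_2 \vee a) = \top$, so $b_1 \vee b_2 \wi a$. Hence relations~\ref{eq-fun-M-3} and~\ref{eq-fun-M-6} apply, and combining them with the previous claim gives
\[
  \Box a = \bigveeup \{ \Box b \mid b \wi a \} \leq \bigvee\{ x \in \fun{M}F \mid x \wi \Box a \} \leq \Box a,
\]
and symmetrically $\Diamond a = \bigveeup \{ \Diamond b \mid b \wi a \} \leq \bigvee\{ x \in \fun{M}F \mid x \wi \Diamond a \} \leq \Diamond a$, so both $\Box a$ and $\Diamond a$ are regular.

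Putting these together: every generator of $\fun{M}F$ is regular, hence by Lemma~\ref{lem-reg-elt} so is every finite meet of generators, and hence so is every join of such meets, i.e.\ every element of $\fun{M}F$. Thus $\fun{M}F$ is regular. The only step that is not a mechanical unwinding of the relations \ref{eq-fun-M-1}--\ref{eq-fun-M-6} is the opening claim that every element of $\fun{M}F$ is a join of finite meets of generators; but this is a general fact about frame presentations (cf.\ Chapter~4 of \cite{Vic89}) and can be invoked rather than reproved, so I do not anticipate any genuine obstacle.
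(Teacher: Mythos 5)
Your proof is correct and follows essentially the same route as the paper's: reduce to the generators via Lemma~\ref{lem-reg-elt}, show $\Box b \wi \Box a$ (and $\Diamond b \wi \Diamond a$) whenever $b \wi a$ using relations \ref{eq-fun-M-2} and \ref{eq-fun-M-5}, and then conclude by directedness of $\{ b \mid b \wi a \}$ together with \ref{eq-fun-M-3} and \ref{eq-fun-M-6}. The only cosmetic differences are that you use an arbitrary witness $c$ where the paper takes the canonical one $\nega b$ via Lemma~\ref{lem-wi-nega}, and that you spell out the directedness check and the $\Diamond$ case that the paper leaves implicit.
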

\begin{proof}
  We need to show that for all $c \in \fun{M}F$ we have $c = \bigvee \{ d \in
  \fun{M}F \mid d \wi c \}$. It follows from Lemma~\ref{lem-reg-elt} that it
  suffices to focus on the generators of $\fun{M}F$. Let $a \in F$, then we know
  $\bigvee \{ d \in \fun{M}F \mid d \wi \Box a \} \leq \Box a$. Suppose $b \wi a$
  in $F$, then by Lemma~\ref{lem-wi-nega} $\nega b \vee a = \top$ and hence
  $\Diamond\nega b \vee \Box a = \top$. Also $\nega b \wedge b = \bot$ so it
  follows from~\ref{eq-fun-M-2} that $\Diamond\nega b \wedge \Box b = \bot$. This
  proves $\Box b \wi \Box a$, because the element $\Diamond\nega b$ is such that
  $\Diamond\nega b \vee \Box a = \top$ and $\Diamond\nega b \wedge \Box b =
  \bot$. Since $F$ is regular and $\{ b \in F \mid b \wi a \}$ is directed, it
  follows that
  \begin{equation*}
    \Box a
      = \Box \bigveeup \{ b \in F \mid b \wi a \}
      = \bigveeup \{ \Box b \in \fun{M}F \mid b \wi a \}
      \leq \bigvee \{ d \in \fun{M}F \mid d \wi \Box a \}
      \leq \Box a
  \end{equation*}
  so $\Box a = \bigvee \{ d \in \fun{M}F \mid d \wi \Box a \}$. In a similar
  fashion one may show that $\Diamond a = \bigvee \{ d \in \fun{M}F \mid d \wi
  \Diamond a \}$. This proves the proposition.
\end{proof}

  We now prove that the functor $\fun{M}$ preserves compactness. We proceed
  in a similar manner as~\cite[Theorem~4.2]{VVV13}. This relies on an auxiliary definition
  and lemma (Definition~\ref{def:M-prime} and Lemma~\ref{lem:M-prime}), in
  which we give an alternative description of $\fun{M}F$. We then prove that
  this alternative description preserves compactness.

  In~\cite[Corollary~3.42]{Gro18} we proved the same result by first giving a duality
  result between frames and topological spaces, and then proving preservation
  of compactness on the topological side. The main difference between that
  proof and the one we present here is that the current one is constructive.

  Write $\fun{P}_{\omega}$ of the finite powerset functor and recall that
  $\fun{Z} : \cat{Frm} \to \cat{Set}$ is the forgetful functor.

\begin{defi}\label{def:M-prime}
  For a frame $F$ define $\fun{M'}F$ to be the free frame generated by
  $\fun{P}_{\omega}\fun{Z}F \times \fun{P}_{\omega}\fun{Z}F$, qua join-semilattice
  (that is, the
  join in $\fun{M'}F$ is given by $(\gamma, \delta) \vee (\gamma', \delta')
  = (\gamma \cup \gamma', \delta \cup \delta')$),
  subject to
  \begin{multicols}{2}
    \begin{enumerate}[($M'_1$)]
      \item\label{eq-fun-Mp-1} 
            $(\gamma \cup \{ a \wedge b \}, \delta) \leq (\gamma \cup \{ a \}, \delta)$
      \item\label{eq-fun-Mp-2} 
            $(\gamma \cup \{ a \}, \delta) \wedge (\gamma, \delta \cup \{ b \})
            \leq (\gamma, \delta)$ \\ if $a \wedge b = 0$
      \item\label{eq-fun-Mp-3} 
            $(\gamma \cup \{ \bigveeup A \}, \delta) \leq \bigveeup_{a \in A}
            (\gamma \cup \{ a \}, \delta)$
      \item\label{eq-fun-Mp-4} 
            $(\gamma, \delta \cup \{ a \}) \leq (\gamma, \delta \cup \{ a \vee b \})$
      \item\label{eq-fun-Mp-5} 
            $\top \leq (\gamma \cup \{ a \}, \delta \cup \{ b \})$ \\ if $a \vee b = 1$
      \item\label{eq-fun-Mp-6} 
            $(\gamma, \{ \bigveeup A \} \cup \delta) \leq
            \bigveeup_{a \in A}(\gamma, \{ a \} \cup \delta)$
    \end{enumerate}
  \end{multicols}
  \noindent
\end{defi}

  This results in a frame isomorphic to $\fun{M}F$:

\begin{lem}\label{lem:M-prime}
  Let $F$ be a frame. Then $\fun{M}F \cong \fun{M'}F$.
\end{lem}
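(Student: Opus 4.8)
The plan is to exhibit frame homomorphisms $\Phi : \fun{M}F \to \fun{M'}F$ and $\Psi : \fun{M'}F \to \fun{M}F$ and check they are mutually inverse. Both frames are given by presentations (Definition \ref{ch3-def-frm-pres}), so by Remark \ref{rem-gen-frm-hom} it suffices in each direction to specify the map on generators and verify that the defining relations of the source are respected in the target. The underlying idea is that a pair $(\gamma, \delta) = (\{a_1,\dots,a_k\}, \{b_1,\dots,b_\ell\}) \in \fun{P}_\omega\fun{Z}F \times \fun{P}_\omega\fun{Z}F$ should correspond to the element $\bigvee_i \Box a_i \vee \bigvee_j \Diamond b_j$ of $\fun{M}F$, while conversely a single generator $\Box a$ corresponds to $(\{a\}, \emptyset)$ and $\Diamond b$ to $(\emptyset, \{b\})$.

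First I would define $\Phi : \fun{M}F \to \fun{M'}F$ on generators by $\Box a \mapsto (\{a\}, \emptyset)$ and $\Diamond b \mapsto (\emptyset, \{b\})$, and check the six relations \ref{eq-fun-M-1}--\ref{eq-fun-M-6}: each follows fairly directly from the correspondingly-numbered relation \ref{eq-fun-Mp-1}--\ref{eq-fun-Mp-6} of $\fun{M'}F$, taking $\gamma, \delta$ empty (and for the join relations \ref{eq-fun-M-3}, \ref{eq-fun-M-6} using that $\bigvee$ of singletons $(\{a\},\emptyset)$ over $a \in A$ is computed correctly — note that in $\fun{M'}F$ only finite joins are controlled by the semilattice structure, but arbitrary joins come for free from the free-frame construction, so $\bigvee_{a\in A}(\{a\},\emptyset)$ makes sense). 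Conversely I would define $\Psi : \fun{M'}F \to \fun{M}F$ by sending the join-semilattice generator $(\gamma, \delta)$ to $\bigvee_{a \in \gamma}\Box a \vee \bigvee_{b \in \delta}\Diamond b$ (with the empty join being $\bot$). One must check this is a join-semilattice homomorphism from $(\fun{P}_\omega\fun{Z}F \times \fun{P}_\omega\fun{Z}F, \cup)$ — immediate since $\bigvee$ distributes over the union of index sets — and then that the six relations \ref{eq-fun-Mp-1}--\ref{eq-fun-Mp-6} hold in $\fun{M}F$; each reduces, after distributing the join $\bigvee_{\gamma}\Box a \vee \bigvee_{\delta}\Diamond b$, to the corresponding relation \ref{eq-fun-M-1}--\ref{eq-fun-M-6} together with the lattice laws in $\fun{M}F$ (e.g. for \ref{eq-fun-Mp-2}, $(\bigvee_\gamma \Box a_i \vee \Box a) \wedge (\bigvee_\gamma \Box a_i \vee \Diamond b)$ expands by distributivity and the cross-term $\Box a \wedge \Diamond b$ vanishes by \ref{eq-fun-M-2}).

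Finally I would verify $\Psi \circ \Phi = \id$ and $\Phi \circ \Psi = \id$. The first is trivial on generators: $\Psi\Phi(\Box a) = \Psi(\{a\},\emptyset) = \Box a$, similarly for $\Diamond$. For the second, $\Phi\Psi(\gamma,\delta) = \bigvee_{a\in\gamma}(\{a\},\emptyset) \vee \bigvee_{b\in\delta}(\emptyset,\{b\})$, and using the semilattice-join description of $\fun{M'}F$'s generators this equals $(\gamma,\delta)$; since both composites are frame homomorphisms agreeing with the identity on a generating set, they are the identity. I expect the main obstacle to be purely bookkeeping: making sure the free-frame-qua-join-semilattice construction in Definition \ref{def:M-prime} is handled correctly — in particular that the join $(\gamma,\delta)\vee(\gamma',\delta') = (\gamma\cup\gamma',\delta\cup\delta')$ is the \emph{finite} join in $\fun{M'}F$ while infinite joins (needed for relations \ref{eq-fun-Mp-3}, \ref{eq-fun-Mp-6} and for $\Phi$ to respect \ref{eq-fun-M-3}, \ref{eq-fun-M-6}) are the ambient frame joins — and checking the relations match up index-by-index without sign errors. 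There is no deep step; the content is that the two presentations describe the same frame because a general element of $\fun{M}F$ can always be brought to the normal form ``join of $\Box$'s and $\Diamond$'s'' using \ref{eq-fun-M-3} and \ref{eq-fun-M-6} to pull directed joins outside, which is exactly what the generating set of $\fun{M'}F$ encodes.
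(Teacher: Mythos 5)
Your proposal is correct and follows essentially the same route as the paper: the paper's proof defines exactly the same two maps on generators ($\Box a \mapsto (\{a\},\emptyset)$, $\Diamond a \mapsto (\emptyset,\{a\})$ and $(\gamma,\delta) \mapsto \bigvee_{c\in\gamma}\Box c \vee \bigvee_{d\in\delta}\Diamond d$) and leaves the verification that the relations are respected and that the maps are mutually inverse as a routine check. Your version merely spells out that check in more detail, including the correct care point about the join-semilattice versus ambient frame joins in the presentation of $\fun{M'}F$.
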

\begin{proof}
  Define $\fun{M'}F \to \fun{M}F : (\gamma, \delta) \mapsto \bigvee_{c \in \gamma}
  \Box c \vee \bigvee_{d \in \delta} \Diamond d$ and
  \[
    \fun{M}F \to \fun{M'}F : \left\{\begin{array}{l}
                               \Box a \mapsto (\{ a \}, \emptyset) \\
                               \Diamond a \mapsto (\emptyset, \{ a \})
                             \end{array}\right.
  \]
  Clearly these define a bijection. Furthermore it
  is straightforward to verify that these maps are well defined by checking that
  the images of generators satisfy relations of the respective frame.
\end{proof}

\begin{thm}\label{thm-M-cpt}
  Suppose $F$ is compact. Then $\fun{M}F$ is compact.
\end{thm}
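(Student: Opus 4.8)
The plan is to prove compactness for $\fun{M'}F$ instead, which by Lemma~\ref{lem:M-prime} is isomorphic to $\fun{M}F$. The point of the presentation in Definition~\ref{def:M-prime} is that every generating relation $(M'_1)$--$(M'_6)$ has a single generator (or $\top$) on the left and either a single generator or a \emph{directed} join of generators on the right, so that $\fun{M'}F$ is described by an explicit coverage; this is exactly what makes a constructive argument possible, and it follows the strategy of \cite[Theorem~4.2]{VVV13}. Since $\fun{M'}F$ is a frame presented over the join-semilattice $\fun{P}_{\omega}\fun{Z}F \times \fun{P}_{\omega}\fun{Z}F$, each of its elements is a join of finite meets of generators, so I would introduce, by mutual induction, a syntactic covering relation $G \cp \mathcal{S}$ between a finite set $G$ of generators (read as $\bigwedge G$) and a set $\mathcal{S}$ of finite sets of generators (read as $\bigvee_{H \in \mathcal{S}} \bigwedge H$), with clauses: a base clause ($G \cp \mathcal{S}$ whenever $H \subseteq G$ for some $H \in \mathcal{S}$), monotonicity in $\mathcal{S}$, a cut rule, the join-semilattice clauses (splitting a generator $(\gamma \cup \gamma', \delta \cup \delta')$ into $(\gamma, \delta)$ and $(\gamma', \delta')$), and one clause per relation $(M'_i)$ — those for $(M'_3)$ and $(M'_6)$ having a premise for each element of the directed set $A$.

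The crux of the whole argument is then to show that $G \cp \mathcal{S}$ holds if and only if $\bigwedge G \le \bigvee_{H \in \mathcal{S}} \bigwedge H$ in $\fun{M'}F$; equivalently, that the $\cp$-closed down-sets of finite sets of generators, ordered by inclusion, form a frame with the universal property of Definition~\ref{ch3-def-frm-pres} with respect to the presentation of $\fun{M'}F$. Soundness (the ``only if'') is a routine induction on derivations. Completeness (the ``if'') is the main technical obstacle: one must check that $\cp$-closed down-sets form a frame, that the assignment $a \mapsto \{\,G \mid (\{a\}, \emptyset) \in G\,\}$ and $a \mapsto \{\,G \mid (\emptyset, \{a\}) \in G\,\}$ validates the relations, and that the resulting frame has the required universal property — in effect a cut-elimination / coherence argument for the calculus $\cp$.

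Granting this, compactness of $\fun{M'}F$ becomes combinatorial. The top element of $\fun{M'}F$ is $\bigwedge \emptyset$, so a cover $\top = \bigvee_{i \in I} c_i$ translates into $\emptyset \cp \mathcal{S}$ with $\mathcal{S} = \bigcup_{i \in I} \mathcal{S}_i$, where $\mathcal{S}_i$ lists the finite meets of generators occurring as disjuncts of $c_i$. I would prove, by induction on the derivation, that $G \cp \mathcal{S}$ implies $G \cp \mathcal{S}_0$ for some \emph{finite} $\mathcal{S}_0 \subseteq \mathcal{S}$; taking $G = \emptyset$ then yields a finite subcover. For the base clause, cut, monotonicity, the join-semilattice clauses, and the clauses for the finitary relations $(M'_1), (M'_2), (M'_4)$ this step is immediate, and $(M'_5)$ — the only clause that can put $\emptyset$ on the left — refers merely to a single pair $a \vee b = 1$ in $F$, so it is harmless. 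The genuine difficulty is in the clauses for the directed-join relations $(M'_3)$ and $(M'_6)$, where a derivation of, say, $\{(\gamma \cup \{\bigveeup A\}, \delta)\} \cp \mathcal{S}$ is assembled from derivations of $\{(\gamma \cup \{a\}, \delta)\} \cp \mathcal{S}$ for all $a \in A$, and one must extract a single finite $\mathcal{S}_0$ that works uniformly. This is exactly where compactness of $F$ enters: by tracking, through the induction, the family of elements of $F$ whose pairwise joins are forced to equal $\top_F$ (the side-conditions accumulated from applications of $(M'_5)$), one uses compactness — together with directedness — of $F$ to reduce it to a finite subfamily and correspondingly to shrink $\mathcal{S}$ to a finite $\mathcal{S}_0$. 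Working out this bookkeeping is, in effect, the content of the theorem, and it runs in parallel with the treatment of the Vietoris locale in \cite[Theorem~4.2]{VVV13}.
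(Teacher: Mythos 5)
Your reduction to $\fun{M'}F$ via Lemma~\ref{lem:M-prime}, and your observation that the relations $(M'_1)$--$(M'_6)$ are shaped precisely so that a coverage-style argument applies, match the setup of the paper's proof. But the lemma on which your finite-subcover extraction rests is false. You propose to show, by induction on derivations, that $G \cp \mathcal{S}$ implies $G \cp \mathcal{S}_0$ for some finite $\mathcal{S}_0 \subseteq \mathcal{S}$, for an \emph{arbitrary} finite set $G$ of generators. That would say that every finite meet of generators is a finite (compact) element of $\fun{M'}F$, and relation $(M'_3)$ itself refutes it: take $G = \{(\{\bigveeup A\}, \emptyset)\}$ and $\mathcal{S} = \{\,\{(\{a\},\emptyset)\} \mid a \in A\,\}$ for a directed $A$ with $u := \bigveeup A \notin A$. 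Then $G \cp \mathcal{S}$ must be derivable in any calculus that is complete for the presentation (it is an instance of $(M'_3)$), but $\Box u \leq \bigvee_{a \in A_0}\Box a$ for a finite $A_0 \subseteq A$ would give $\Box u \leq \Box a^*$ for an upper bound $a^*$ of $A_0$ in $A$, hence $\Box u = \Box a^*$ since $\Box$ is monotone by $(M'_1)$. This fails already for the compact frame $F = \fun{opn}[0,1]$ with $u = (0,1)$ and $A$ the intervals $(1/n, 1-1/n)$: under the isomorphism of Theorem~\ref{thm:D-M-dual-obj} one has $\Box u = \dbox u$, and the up-set $W_K = \{ v \mid K \subseteq v\}$ of a closed $K$ with $K \subseteq u$ and $K \not\subseteq a^*$ lies in $\dbox u \setminus \dbox a^*$. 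So the induction cannot go through as stated, and no bookkeeping of side conditions from $(M'_5)$ repairs it: the obstruction is not the shape of the derivations but that the conclusion is false for most $G$.

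What has to be isolated is which $G$ are \emph{forced to denote $\top$}, and this is exactly what the paper packages into a single map $\phi : \fun{M'}F \to 2$ sending $(\gamma,\delta)$ to $1$ iff some $c \in \gamma$ satisfies $c \vee \bigvee\delta = \top_F$. The paper also sidesteps your first ``technical obstacle'' (the coherence/cut-elimination argument for $\cp$) by invoking the preframe coverage theorem: since all the relations are join-stable, $\fun{M'}F$ \emph{qua preframe} is presented by the same generators and relations, so it suffices to check that $\phi$ respects $(M'_1)$--$(M'_6)$ --- compactness of $F$ entering only at $(M'_3)$ and $(M'_6)$ --- and that $\phi$ is right adjoint to $! : 2 \to \fun{M'}F$; the existence of such a preframe homomorphism is equivalent to compactness. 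If you wish to keep a derivation-based argument, the invariant you must carry through the induction is $\phi$ itself: prove that $G \cp \mathcal{S}$ and $\phi(\bigwedge G) = 1$ imply $\phi(\bigwedge H) = 1$ for some $H \in \mathcal{S}$, which amounts to the same case analysis the paper performs.
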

\begin{proof}
  The frame $\fun{M}D$ is compact iff there is a preframe homomorphism
  $\phi : \fun{M}D \to 2$ that is right adjoint to the unique frame homomorphism
  $! : 2 \to \fun{M}D$, where $2 = \{ 0, 1 \}$ is the two-element frame.

  By Proposition~\ref{lem:M-prime} we have $\fun{M}D \cong \fun{M'}D$, and since
  all the relations in Definition~\ref{def:M-prime} are join-stable, we can use the
  preframe coverage theorem~\cite[Theorem~5.1]{JohVic06} to find that $\fun{M'}F$ viewed
  as a preframe is the preframe generated by $\fun{P}_{\omega}\fun{Z}F \times
  \fun{P}_{\omega}\fun{Z}F$ qua poset, subject to the
  the relations from Definition~\ref{def:M-prime}.

  Define
  \[
    \phi : \fun{M'}F \to 2
         : (\gamma, \delta) \mapsto
           \left\{\begin{array}{ll} 1 & \text{ iff there exists a $c \in \gamma$ such
                                               that $c \vee (\bigvee \delta) = 1$} \\
                                    0 & \text{ otherwise}
           \end{array} \right. . 
  \]
  First we check that $\phi$ is indeed a pre-frame homomorphism. Since $\phi$ is
  defined on generators, it suffices to show that it preservers the
  relations~\ref{eq-fun-Mp-1} to~\ref{eq-fun-Mp-6}, because if it does it can be lifted in a
  unique way to a preframe homomorphism $\fun{M'}F \to 2$.
  It is clear that $\phi$ is a monotone morphism (hence preserves the poset
  structure of the generators). We check that $\phi$ preserves the
  relations one by one.
  \begin{enumerate}[($M'_1$)]
    \item If $\phi(\gamma \cup \{ a \}, \delta) = 0$ then $c \vee \bigvee \delta = \top_F$
          for all $c \in \gamma$ and $(a \wedge b) \vee \bigvee \delta \leq a \vee
          \bigvee \delta = \top_F$.

    \item Suppose $\phi(\gamma \cup \{a \}, \delta) = 1$ and $\phi(\gamma, \delta
          \cup \{ b \}) = 1$. Then either there is some $c \in \gamma$ such that
          $c \vee \bigvee \delta = \top_F$, which implies $\phi(\gamma, \delta) = 1$, or
          $a \vee \bigvee \delta = \top_F$. In the latter case, note that we also have some
          $c' \in \gamma$ such that $c' \vee \bigvee \delta \vee b = \top_F$, so that
          \[
            c' \vee \bigvee \delta = c' \vee \bigvee \delta \vee (a \wedge b) =
            (a \vee \bigvee \delta \vee c') \wedge (c' \vee \bigvee \delta \vee b) =
            \top_F \wedge \top_F = \top_F.
          \]
          The first equality holds because $a \wedge b = \bot_F$. Again we find
          $\phi(\gamma, \delta) = 1$.

    \item Suppose $\phi(\gamma \cup \{\bigveeup A \}, \delta) = 1$, then either
          $c \vee (\bigvee \delta) = \top_F$ for some $c \in \gamma$, or $\top_F = (\bigveeup A)
          \vee (\bigvee \delta) = \bigveeup_{a \in A}(a \vee (\bigvee \delta))$ (note
          that the latter is indeed a directed set, because $A$ is). By compactness of
          $F$ this gives $a \vee (\bigvee \delta) = \top_F$ for some $a \in A$. So both
          cases yield $\phi\big(\bigveeup_{a \in A}(\gamma \cup \{ a \}, \delta)\big) = 1$.

    \item If $\phi(\gamma, \delta \cup \{ a \}) = 1$, then $c \vee \bigvee (\delta
          \cup \{ a\}) = \top_F$ for some $c \in \gamma$, so $c \vee \bigvee (\delta \cup
          \{ a \vee b\}) \geq c \vee \bigvee (\delta \cup \{ a \}) = \top_F$.

    \item If $a \vee b = \top_F$, then $a \vee \bigvee (\delta \cup \{ b \}) = \top_F$ so
          $\phi(\gamma \cup \{ a \}, \delta \cup \{ b \}) = 1$.

    \item Suppose $\phi(\gamma, \{ \bigveeup A \} \cup \delta) = 1$, then, for some
          $c \in \gamma$, we have
          \[
            \top_F = c \vee \bigvee(\{ \bigveeup A \} \cup \delta)
              = \bigveeup(c \vee a \vee \bigvee \delta)
          \]
          and by compactness we must have $c \vee \bigveeup(\{ a \} \cup \delta) = \top_F$
          for one of the $a$.
          (The set $\{ c \vee a \vee \bigvee \delta \mid a \in A \}$ is directed
          and by~\ref{eq-fun-Mp-4}.)
\end{enumerate}

\noindent
  Lastly, we need to verify that $\phi$ is right-adjoint to $! : 2 \to \fun{M'}L$
  (defined by $1 \mapsto \top_{\fun{M'}F} = (\{ \top_F \}, \{ \top_F \})$, and
  $0 \mapsto \bot_{\fun{M'}F} = (\emptyset, \emptyset)$). It suffices to show that
  $\phi(!(p)) \geq p$ and $!(\phi(\gamma, \delta)) \leq (\gamma, \delta)$. For the first,
  suppose $p = 1$, then $!(p)$ is the equivalence class of $(\{\top_F\}, \{ \top_F\})$
  and $\phi(!(p)) = 1$. For the second, if $\phi(\gamma, \delta) = 1$, then there are
  $c \in \gamma$ such that $c \vee (\bigvee \delta) = \top_F$ (in particular
  $\delta \neq \emptyset$) and hence
  \[
    \top_{\fun{M'}F} = (\{ \top_F \}, \delta)
                     = (\{ c \vee (\bigvee \delta) \}, \delta)
                     \leq (\{ c \}, \delta)
                     \leq (\gamma, \delta).
  \]
  The first inequality follows from recalling that $\delta$ is a finite set and
  applying~\ref{eq-fun-Mp-6} repeatedly.
  This completes the proof.
\end{proof}

  We now know that $\fun{M}$ restricts to an endofunctor on $\cat{KRFrm}$.
  We write $\fun{M}_{\cat{kr}}$ for this restriction.

\begin{rem}
  The category $\cat{Loc}$ of locales and locale morphisms is the opposite of
  $\cat{Frm}$. Therefore, we can also view $\fun{M}$ as an endofunctor on
  locales and $\fun{M}A$ as the monotone neighbourhood locale, where $A$ is
  a locale.
\end{rem}

\subsection{Monotone neighbourhood functor on \texorpdfstring{$\cat{KHaus}$}{KHaus}}

  We now describe the topological manifestation of the monotone neighbourhood
  functor.

\begin{defi}\label{def-monotone-functor}\label{def-mon-khaus}
  Let $\topo{X} = (X, \tau)$ be a compact Hausdorff space. Let
  $\fun{D}_{\cat{kh}}\topo{X}$ be the collection of sets $W \subseteq \fun{P}X$
  such that $u \in W$ iff there exists a closed $c \subseteq u$ such that every
  open superset of $c$ is in $W$. Endow $\fun{D}_{\cat{kh}}\topo{X}$ with the
  topology generated by the subbase
  \begin{equation*}
    \dbox a := \{ W \in \fun{D}_{\cat{kh}}\topo{X} \mid a \in W \},
    \quad
    \ddiamond a := \{ W \in \fun{D}_{\cat{kh}}\topo{X} \mid X \setminus a
      \notin W \},
  \end{equation*}
  where $a$ ranges over $\fun{\Omega}\topo{X}$.
  For continuous functions $f : \topo{X} \to \topo{X}'$ define
  $\fun{D}_{\cat{kh}}f
    : \fun{D}_{\cat{kh}}\topo{X} \to \fun{D}_{\cat{kh}}\topo{X}'
    : W \mapsto \{ a \in \fun{P}X \mid f^{-1}(a) \in W \}$.
\end{defi}

\begin{lem}\label{lem-Df-well}
  If $f : \topo{X} \to \topo{X}'$ is a morphism in $\cat{KHaus}$, then
  $\fun{D}_{\cat{kh}}f$ is a well-defined continuous function from
  $\fun{D}_{\cat{kh}}\topo{X}$ to $\fun{D}_{\cat{kh}}\topo{X}'$.
\end{lem}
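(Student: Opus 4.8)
The plan is to check two things: first, that $\fun{D}_{\cat{kh}}f(W) := \{\, a' \in \fun{P}X' \mid f^{-1}(a') \in W \,\}$ genuinely lands in $\fun{D}_{\cat{kh}}\topo{X}'$ whenever $W \in \fun{D}_{\cat{kh}}\topo{X}$, and then that the resulting map is continuous. I would begin by recording the auxiliary observation that every $W \in \fun{D}_{\cat{kh}}\topo{X}$ is monotone (up-closed): if $u \in W$ and $u \subseteq v$, the defining ``iff'' provides a closed $c \subseteq u \subseteq v$ all of whose open supersets lie in $W$, and this same $c$ then witnesses $v \in W$.

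For well-definedness, fix $W \in \fun{D}_{\cat{kh}}\topo{X}$ and write $V := \fun{D}_{\cat{kh}}f(W)$; I must verify that for every $u' \subseteq X'$ one has $u' \in V$ iff there is a closed $c' \subseteq u'$ whose open supersets all lie in $V$. For the forward implication, from $f^{-1}(u') \in W$ choose a closed $c \subseteq f^{-1}(u')$ with every open superset in $W$; since $\topo{X}$ is compact Hausdorff and $f$ continuous, $f[c]$ is compact, hence closed in $\topo{X}'$, it satisfies $f[c] \subseteq u'$, and any open $a' \supseteq f[c]$ yields an open set $f^{-1}(a') \supseteq c$, so $f^{-1}(a') \in W$, i.e.\ $a' \in V$. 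For the converse, given a closed $c' \subseteq u'$ with every open superset in $V$, I take $f^{-1}(c')$ as a closed subset of $f^{-1}(u')$ and show that each open $a \supseteq f^{-1}(c')$ in $\topo{X}$ lies in $W$: the set $b' := X' \setminus f[X \setminus a]$ is open (this is exactly the closed-map argument already used in the proof of Proposition~\ref{prop-strong-pred-lift}, relying on compactness of $X$ and Hausdorffness of $X'$), it contains $c'$ because $f^{-1}(c') \subseteq a$, and it satisfies $f^{-1}(b') \subseteq a$; hence $f^{-1}(b') \in W$ and, by monotonicity of $W$, $a \in W$. The defining condition for $W$ applied to the closed set $f^{-1}(c') \subseteq f^{-1}(u')$ then gives $f^{-1}(u') \in W$, i.e.\ $u' \in V$, as required.

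Continuity is then immediate from the structure of the subbase: I would compute the preimages of the subbasic opens of $\fun{D}_{\cat{kh}}\topo{X}'$ and observe that $(\fun{D}_{\cat{kh}}f)^{-1}(\dbox a') = \dbox\!\big(f^{-1}(a')\big)$ and $(\fun{D}_{\cat{kh}}f)^{-1}(\ddiamond a') = \ddiamond\!\big(f^{-1}(a')\big)$, using $f^{-1}(X' \setminus a') = X \setminus f^{-1}(a')$ in the diamond case; since $f^{-1}(a')$ is open, both are subbasic opens of $\fun{D}_{\cat{kh}}\topo{X}$.

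I expect the one genuinely delicate point to be the converse direction of well-definedness --- namely producing, for an arbitrary open $a \supseteq f^{-1}(c')$, an open set $b'$ on the $\topo{X}'$ side that still contains $c'$ and pulls back inside $a$. This is precisely where compactness and Hausdorffness (forcing continuous maps to be closed) are essential; the forward direction, monotonicity, and continuity are then routine bookkeeping.
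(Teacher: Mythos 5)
Your proof is correct, and its forward direction and continuity computation follow exactly the paper's argument: push the witnessing closed set forward along $f$ (closed because $\topo{X}$ is compact and $\topo{X}'$ Hausdorff) to witness membership in $\fun{D}_{\cat{kh}}f(W)$, and then check $(\fun{D}_{\cat{kh}}f)^{-1}(\dbox a') = \dbox f^{-1}(a')$ and $(\fun{D}_{\cat{kh}}f)^{-1}(\ddiamond a') = \ddiamond f^{-1}(a')$ on the subbase. The one genuine difference is that you also verify the \emph{converse} half of the defining biconditional for membership in $\fun{D}_{\cat{kh}}\topo{X}'$ --- that a subset $u'$ admitting a closed $c' \subseteq u'$ all of whose open supersets lie in $\fun{D}_{\cat{kh}}f(W)$ is itself in $\fun{D}_{\cat{kh}}f(W)$ --- via the up-closure of $W$ and the closed-map construction $b' = X' \setminus f[X \setminus a]$. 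The paper's proof only spells out the ``only if'' half, so your version is the more complete of the two; the extra step is exactly the delicate point you flag, and your handling of it (pulling $c'$ back to $f^{-1}(c')$ and showing every open $a \supseteq f^{-1}(c')$ contains some $f^{-1}(b')$ with $b'$ an open superset of $c'$) is sound.
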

\begin{proof}
  {\it $\fun{D}_{\cat{kh}}f$ is well-defined.} \; Let
  $W \in \fun{D}_{\cat{kh}}\topo{X}$. We need to show that $\fun{D}_{\cat{kh}}f(W)
  \in \fun{D}_{\cat{kh}}\topo{X}'$. Suppose $a' \in \fun{D}_{\cat{kh}}f(W)$.
  Then $f^{-1}[a'] \in W$, so there exists a closed $c \subseteq f^{-1}[a']$ such
  that $c \in W$. Since $\topo{X}$ is compact and $\topo{X}'$ is Hausdorff,
  $f[c]$ is a closed subset of $\topo{X}'$. In addition we have $f[c] \subseteq a'$.
  Suppose $f[c] \subseteq b$ for some open $b \in \fun{\Omega}\topo{X}'$, then
  $c \subseteq f^{-1}[b]$ so $f^{-1}[b] \in W$ and hence
  $b \in \fun{D}_{\cat{kh}}f(W)$. So all open supersets of $f[c]$ are in
  $\fun{D}_{\cat{kh}}f(W)$, and therefore $f[c] \in \fun{D}_{\cat{kh}}f(W)$.
  Thus, for $a' \in \fun{D}_{\cat{kh}}f(W)$, there exists a closed subset (in
  this case $f[c]$) of $a'$ with the property that every open superset is in
  $\fun{D}_{\cat{kh}}f(W)$.

\bigskip\noindent
  {\it $\fun{D}_{\cat{kh}}f$ is continuous.} \; For continuity we need to show
  that both $(\fun{D}_{\cat{kh}}f)^{-1}(\dbox a')$ and
  $(\fun{D}_{\cat{kh}}f)^{-1}(\ddiamond a')$ are open in
  $\fun{D}_{\cat{kh}}\topo{X}$, whenever $a' \in \fun{\Omega}(\topo{X}')$. It
  follows from a straightforward computation that $(\fun{D}_{\cat{kh}}f)^{-1}
  (\dbox a') = \dbox f^{-1}(a')$, which is open in $\fun{D}_{\cat{kh}}\topo{X}$
  by definition, and similarly $(\fun{D}_{\cat{kh}}f)^{-1}(\ddiamond a') =
  \ddiamond f^{-1}(a') \in \fun{\Omega}\fun{D}_{\cat{kh}}\topo{X}$.
\end{proof}

  For the time being, we regard $\fun{D}_{\cat{kh}}$ as a functor
  $\cat{KHaus} \to \cat{Top}$, because we have no evidence yet that it
  restricts to an endofunctor on $\cat{KHaus}$.
  We aim to prove that $\fun{D}_{\cat{kh}}$ is dual to the restriction of
  $\fun{M}$ to $\cat{KRFrm}$. As a corollary, we then obtain that
  $\fun{D}_{\cat{kh}}$ indeed restricts to $\cat{KHaus}$.

\begin{thm}\label{thm:D-M-dual-obj}
  If $\topo{X}$ is a compact Hausdorff space then
  \begin{equation*}
    \fun{pt}(\fun{M}(\fun{opn}\topo{X})) \cong \fun{D}_{\cat{kh}}\topo{X}.
  \end{equation*}
\end{thm}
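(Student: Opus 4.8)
The plan is to exhibit a homeomorphism directly, by identifying the points of $\fun{M}(\fun{opn}\topo{X})$ with the elements of $\fun{D}_{\cat{kh}}\topo{X}$ and checking that this identification matches the two topologies. A point of $\fun{M}(\fun{opn}\topo{X})$ is a frame homomorphism $p : \fun{M}(\fun{opn}\topo{X}) \to 2$; since $\fun{M}F$ is presented by generators $\Box a, \Diamond a$ ($a \in \fun{opn}\topo{X}$) subject to \ref{eq-fun-M-1}--\ref{eq-fun-M-6}, such a $p$ is determined by the pair of sets $B_p = \{ a \in \fun{\Omega}\topo{X} \mid p(\Box a) = \top \}$ and $D_p = \{ a \in \fun{\Omega}\topo{X} \mid p(\Diamond a) = \top \}$, and these pairs are exactly those satisfying the relational constraints translated to $2$. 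The first step is to translate those constraints: \ref{eq-fun-M-3}/\ref{eq-fun-M-6} say $B_p, D_p$ are Scott-open in $\fun{\Omega}\topo{X}$ (closed under directed joins in the appropriate variable and upward closed by \ref{eq-fun-M-1}/\ref{eq-fun-M-4}), while \ref{eq-fun-M-2}/\ref{eq-fun-M-5} are the compatibility conditions $a \wedge b = \bot \Rightarrow \neg(a \in B_p \wedge b \in D_p)$ and $a \vee b = \top \Rightarrow (a \in B_p \vee b \in D_p)$. Define $\Theta(p) := \{ u \in \fun{P}X \mid \exists\, \text{closed } c \subseteq u \text{ with every open superset of } c \text{ in } B_p \} \cup \{ u \mid X \setminus u \notin D_p' \}$ — more precisely, I would define the underlying neighbourhood collection $W_p \subseteq \fun{P}X$ to be the "closedification" of $B_p$, namely $W_p = \{ u \subseteq X \mid \exists\, c \text{ closed}, c \subseteq u, \text{ and } a \in B_p \text{ for all open } a \supseteq c \}$, and then verify $W_p \in \fun{D}_{\cat{kh}}\topo{X}$ directly from its definition, and that $\dbox a \ni W_p \iff a \in B_p$ and $\ddiamond a \ni W_p \iff a \in D_p$.

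Next I would check the two bijection conditions. Going from a point $p$ to $W_p$ and back: $a \in B_{p}$ should recover to $a \in W_p$, which holds because an open set $a$ has itself as a closed-from-below witness only after intersecting with a suitable closed set — here compactness and regularity of $\topo{X}$ are essential, since in a compact regular frame every open is the directed join of opens well inside it, and a closed set $c$ with $a \supseteq c \supseteq (\text{interior of } c)$ can be found below any element of $B_p$ using Scott-openness of $B_p$. Conversely, starting from $W \in \fun{D}_{\cat{kh}}\topo{X}$, set $B_W = \{ a \in \fun{\Omega}\topo{X} \mid a \in W \}$ and $D_W = \{ a \in \fun{\Omega}\topo{X} \mid X \setminus a \notin W \}$; one checks $B_W, D_W$ define a point (the relational constraints \ref{eq-fun-M-1}--\ref{eq-fun-M-6} for $2$ hold: the Scott-continuity conditions follow from compactness, exactly as in the proof of Theorem~\ref{thm-M-cpt}, and the compatibility conditions follow from the defining property of $\fun{D}_{\cat{kh}}\topo{X}$ together with compactness/Hausdorffness separating disjoint closed sets), and that $W_{(B_W, D_W)} = W$ again using the closed-witness clause in the definition of $\fun{D}_{\cat{kh}}\topo{X}$.

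Finally, the topological part: $\fun{pt}(\fun{M}(\fun{opn}\topo{X}))$ carries the topology generated by sets $\widetilde{\Box a}$ and $\widetilde{\Diamond a}$ (images of generators), and $\fun{D}_{\cat{kh}}\topo{X}$ carries the topology generated by $\dbox a$ and $\ddiamond a$; the correspondence $p \leftrightarrow W_p$ sends $\widetilde{\Box a}$ to $\dbox a$ and $\widetilde{\Diamond a}$ to $\ddiamond a$ by construction, and since a frame homomorphism's image on generators generates the frame, these subbasic opens correspond, so the bijection is a homeomorphism. I would also remark that naturality in $\topo{X}$ — which gives the natural isomorphism of functors, needed for the corollary that $\fun{D}_{\cat{kh}}$ restricts to $\cat{KHaus}$ — follows by chasing generators through $\fun{M}(\fun{opn} f)$ and $\fun{D}_{\cat{kh}}f$, both of which act by $a \mapsto f^{-1}(a)$ on the relevant indexing opens.

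The main obstacle I expect is the direction showing $a \in B_p \Rightarrow a \in W_p$, i.e. that every open set in the $\Box$-filter of a point genuinely contains a closed set all of whose open supersets lie in that filter. This is where compactness and regularity of $\topo{X}$ do the real work: one must use that $a = \bigcup\{ a' \in \fun{\Omega}\topo{X} \mid a' \wi a \}$ is a directed union (Lemma~\ref{lem-reg-elt} and the remarks on regular frames), invoke Scott-openness of $B_p$ (from \ref{eq-fun-M-3}) to get some $a' \wi a$ in $B_p$, and then take $c$ to be a closed set with $a' \subseteq c \subseteq a$, checking every open superset of $c$ lies in $B_p$ by upward closure. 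Dually for the $\Diamond$-side. Getting the interplay between "well inside" in the frame $\fun{opn}\topo{X}$ and closed sets in the space $\topo{X}$ exactly right — and making sure the Scott-openness is used in the correct variable — is the delicate book-keeping that the rest of the argument reduces to.
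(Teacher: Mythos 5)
Your proposal is correct and follows essentially the same route as the paper's proof: both construct the bijection pointwise via the generators $\Box a$, $\Diamond a$, both hinge on the same key lemma --- that $a \in W_p$ iff $p(\Box a)=\top$ and $X\setminus a \in W_p$ iff $p(\Diamond a)=\bot$, proved exactly as you sketch via compact regularity, $a = \bigveeup\{a' \mid a' \wi a\}$, and relations \ref{eq-fun-M-2}, \ref{eq-fun-M-3}, \ref{eq-fun-M-5}, \ref{eq-fun-M-6} --- and both conclude by matching the subbases $\widetilde{\Box a} \leftrightarrow \dbox a$ and $\widetilde{\Diamond a} \leftrightarrow \ddiamond a$. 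The only cosmetic difference is that you define $W_p$ as the ``closedification'' of the box-filter $B_p$, whereas the paper defines it as the up-set of $\{X\setminus a \mid p(\Diamond a) = \bot\}$; the step you defer with ``dually for the $\Diamond$-side'' is precisely the paper's Lemma \ref{lem:DM-2} and does go through.
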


  We temporarily fix a compact Hausdorff space $\topo{X}$ and define the two maps
  constituting a homeomorphism.

\begin{defi}
  For a compact Hausdorff space $\topo{X}$, define $\zeta : \fun{pt} \circ \fun{M}
  \circ \fun{opn}\topo{X} \to \fun{D}_{\cat{kh}}\topo{X}$ by sending a prime
  filter $p$ to
  \begin{equation*}
    W_p := {\uparrow}\{ X \setminus a \mid p(\Diamond a) = \bot \}.
  \end{equation*}
\end{defi}

  We have $W_p \in \fun{D}_{\cat{kh}}\topo{X}$ because it is the up-set of a
  collection of closed sets; indeed, for each $b \in W_p$ there exists a closed
  subset $X \setminus a \subseteq b$ with $p(\Diamond a) = \bot$ and by
  definition all open supersets of $X \setminus a$ are in $W_p$. Therefore
  $\zeta$ is well defined.
  In the converse direction we define:

\begin{defi}\label{def:theta}
  For a compact Hausdorff space $\topo{X}$, define
  \[
    \theta : \fun{D}_{\cat{kh}}\topo{X} \to \fun{pt} \circ \fun{M} \circ
    \fun{opn}\topo{X} : W \mapsto p_W,
  \]
  where $p_W$ is given on generators by
  \[
    p_W
      : \fun{M} \circ \fun{opn}\topo{X} \to 2
      : \left\{\begin{array}{ll}
          \Box a \mapsto \top &\text{ iff } a \in W  \\
          \Diamond a \mapsto \bot &\text{ iff } X \setminus a \in W
        \end{array}\right.
  \]
\end{defi}

\begin{lem}
  The assignment $\theta$ is well defined.
\end{lem}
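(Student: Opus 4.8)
The plan is to show that, for each $W \in \fun{D}_{\cat{kh}}\topo{X}$, the assignment $p_W$ prescribed on generators in Definition~\ref{def:theta} extends to a well-defined frame homomorphism $p_W : \fun{M}(\fun{opn}\topo{X}) \to 2$, i.e.\ a point of $\fun{M}(\fun{opn}\topo{X})$, equivalently a well-defined element of $\fun{pf}(\fun{M}(\fun{opn}\topo{X}))$. Since $\fun{M}(\fun{opn}\topo{X})$ is the frame presented by the generators $\Box a, \Diamond a$ subject to $(M_1)$--$(M_6)$, by Remark~\ref{rem-gen-frm-hom} it suffices to verify that the images of these generators under $p_W$ satisfy each of the six relations in the two-element frame $2$; the unique extension to a frame homomorphism is then automatic, and with it the well-definedness of $\theta$.

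The first fact I would record is that every $W \in \fun{D}_{\cat{kh}}\topo{X}$ is an up-set in $\fun{P}X$: if $u \in W$ and $u \subseteq v$, then a closed witness $c \subseteq u$ for $u \in W$ also satisfies $c \subseteq v$, so $v \in W$ by the defining property of $\fun{D}_{\cat{kh}}\topo{X}$. Given this, the relations $(M_1)$, $(M_2)$, $(M_4)$ and $(M_5)$ reduce to elementary observations. For $(M_1)$ one uses $a \cap b \subseteq a$ together with upward closure; for $(M_2)$, that $a \cap b = \emptyset$ means $a \subseteq X \setminus b$, so if $a \in W$ then $X \setminus b \in W$ and hence $p_W(\Diamond b) = \bot$; for $(M_4)$, that $X \setminus (a \cup b) \subseteq X \setminus a$; and for $(M_5)$, that $a \cup b = X$ means $X \setminus b \subseteq a$, so either $X \setminus b \notin W$ (giving $p_W(\Diamond b) = \top$) or $X \setminus b \in W$ (giving $a \in W$ and $p_W(\Box a) = \top$). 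In each case the required inequality in $2$ follows immediately.

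The substantive relations are $(M_3)$ and $(M_6)$, where compactness of $\topo{X}$ enters; recall that in $\fun{opn}\topo{X}$ a directed join is a directed union of opens. For $(M_3)$, with $A \subseteq \fun{opn}\topo{X}$ directed, I must show $\bigcup A \in W$ iff $a \in W$ for some $a \in A$; the nontrivial direction uses that a closed witness $c \subseteq \bigcup A$ is compact (as $\topo{X}$ is compact), hence $c \subseteq a$ for some $a \in A$, and then $a$, being an open superset of $c$, lies in $W$. For $(M_6)$ I must show $X \setminus \bigcup A \notin W$ iff $X \setminus a \notin W$ for some $a \in A$; contrapositively, that $\bigcap_{a\in A}(X\setminus a) \in W$ whenever every $X \setminus a \in W$. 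Here the closed set $c := \bigcap_{a\in A}(X\setminus a) = X \setminus \bigcup A$ serves as its own witness: given any open $b \supseteq c$, its closed (hence compact) complement satisfies $X \setminus b \subseteq \bigcup_{a\in A} a$, so by compactness $X \setminus b \subseteq a_0$ for some $a_0 \in A$, i.e.\ $X \setminus a_0 \subseteq b$; since $X \setminus a_0 \in W$, upward closure gives $b \in W$. Thus every open superset of $c$ is in $W$, whence $c \in W$.

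I expect $(M_6)$ to be the main obstacle: it is the only relation that requires producing a fresh closed witness, and it is the place where one must genuinely combine compactness of $\topo{X}$, the full defining property of $\fun{D}_{\cat{kh}}\topo{X}$, and the correspondence between directed unions of opens and filtered intersections of closed sets. Once $(M_1)$--$(M_6)$ are all verified, Remark~\ref{rem-gen-frm-hom} yields the frame homomorphism $p_W : \fun{M}(\fun{opn}\topo{X}) \to 2$, so $\theta(W) = p_W$ is a well-defined point and $\theta$ is well defined.
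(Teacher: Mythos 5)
Your proof is correct and follows the same strategy as the paper: verify that the images of the generators under $p_W$ satisfy the relations $(M_1)$--$(M_6)$ in $2$ and then invoke the universal property of the presentation. The only difference is one of thoroughness --- the paper explicitly checks only $(M_1)$--$(M_3)$ and declares the rest ``similar'', whereas you spell out $(M_4)$--$(M_6)$, and your treatment of $(M_6)$ (constructing the closed witness $X \setminus \bigcup A$ and using compactness of complements of open supersets) is a genuinely useful elaboration, since that case is the one not literally analogous to the others.
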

\begin{proof}
  Since $p_W$ is a frame homomorphisms defined on generators, it suffices to
  check that the $p_W(\Box a)$ and $p_W(\Diamond a)$ (where the $a$ range over
  $\fun{\Omega}\topo{X}$) satisfy~\ref{eq-fun-M-1} through~\ref{eq-fun-M-6} from
  Definition~\ref{def-functor-M}. Let us check~\ref{eq-fun-M-1}, \ref{eq-fun-M-2}
  and~\ref{eq-fun-M-3}, items~\ref{eq-fun-M-4}, \ref{eq-fun-M-5}
  and~\ref{eq-fun-M-6} being similar.
  \begin{enumerate}[($M_1$)]
    \item If $p_W(\Box(a \cap b)) = \top$ then $a \cap b \in W$. Since $W$ is
          upward closed $a \in W$, so $p_W(\Box a) = \top$.
    \item If $a \cap b = \emptyset$ then $a \subseteq X \setminus b$. Suppose
          $p_W(\Box a) = \top$ then $a \in W$ so $X \setminus b \in W$ so
          $p_W(\Diamond b) = \bot$ hence $p_W(\Box a) \wedge p_W(\Diamond b) =
          \bot$.
    \item We claim that for all $W \in \fun{D}_{\cat{kh}}\topo{X}$ and directed
          sets $A \subseteq \fun{\Omega}\topo{X}$ we have $\bigcupup A \in W$ iff
          there is $a \in A$ with $a \in W$. The direction from right to left
          follows from the fact that $W$ is upwards closed. Conversely, suppose
          $\bigcupup A \in W$, then there is a closed set $k \subseteq \bigcupup
          A$ with $k \in W$. The elements of $A$ now cover the closed therefore
          compact set $k$, so there is a finite $A' \subseteq A$ with
          $k \subseteq \bigcup A'$ and since $A$ is directed there is $a \in A$
          with $\bigcup A' \subseteq a$. As $k \in W$ and $k \subseteq a$ it
          follows that $a \in W$.

          Now we have $p_W(\Box \bigcupup A) = \top$ iff $\bigcupup A \in W$ iff
          there is $a \in A$ with $a \in W$ iff $\bigveeup \{ p_W(\Box a) \mid a
          \in A \} = \top$. \qedhere
  \end{enumerate}
\end{proof}

\noindent
  The following lemma is key for proving that $\zeta$ and $\theta$ are
  continuous and each other's inverses.

\begin{lem}\label{lem:DM-2}
  For all $p \in \fun{pt} \circ \fun{M} \circ \fun{opn}\topo{X}$ we have
  $X \setminus a \in W_p$ iff $p(\Diamond a) = \bot$ and $a \in W_p$ iff
  $p(\Box a) = \top$.
\end{lem}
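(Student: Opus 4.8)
The plan is to prove the two equivalences separately, using throughout that a point $p$ is by definition a frame homomorphism $\fun{M}(\fun{opn}\topo{X}) \to 2$, so it preserves finite meets and arbitrary joins (in particular it is monotone), and that the generators $\Box a, \Diamond a$ of $\fun{M}(\fun{opn}\topo{X})$ satisfy the relations \ref{eq-fun-M-1}--\ref{eq-fun-M-6} of Definition \ref{def-functor-M}. Recall also that, by definition of $W_p$, a set $b \subseteq X$ lies in $W_p$ exactly when $X \setminus a \subseteq b$ for some open $a$ with $p(\Diamond a) = \bot$.

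For the equivalence $X \setminus a \in W_p \iff p(\Diamond a) = \bot$: the implication from right to left is immediate, since $p(\Diamond a) = \bot$ puts $X \setminus a$ itself in the generating family of $W_p$. For the converse, if $X \setminus a \in W_p$ then $X \setminus a' \subseteq X \setminus a$, i.e.\ $a \subseteq a'$, for some open $a'$ with $p(\Diamond a') = \bot$; now $a \subseteq a'$ gives $a \vee a' = a'$, so \ref{eq-fun-M-4} yields $\Diamond a \le \Diamond a'$ and hence $p(\Diamond a) \le p(\Diamond a') = \bot$.

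For the equivalence $a \in W_p \iff p(\Box a) = \top$: from left to right, if $a \in W_p$ pick an open $a'$ with $p(\Diamond a') = \bot$ and $X \setminus a' \subseteq a$; the latter says $a \vee a' = \top$, so \ref{eq-fun-M-5} gives $\Box a \vee \Diamond a' = \top$, and applying $p$ yields $p(\Box a) = p(\Box a) \vee \bot = \top$. The right-to-left implication is the heart of the lemma, and the step I expect to be the main obstacle. Assume $p(\Box a) = \top$. Since $\topo{X}$ is compact Hausdorff, $\fun{opn}\topo{X}$ is a regular frame (Fact \ref{fact-pt-opn}), so $a = \bigveeup \{ b \mid b \wi a \}$; this family is directed, because if $b \wi a$ and $b' \wi a$ are witnessed by $c$ and $c'$ respectively (so $c \wedge b = \bot$, $c \vee a = \top$, and symmetrically for $c'$, $b'$), then $c \wedge c'$ witnesses $b \vee b' \wi a$. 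Hence by \ref{eq-fun-M-3} we get $\Box a = \bigveeup \{ \Box b \mid b \wi a \}$, and since $p$ preserves this join and takes values in $2$, there is some $b \wi a$ with $p(\Box b) = \top$. Fix a witness $c$ for $b \wi a$, so $c \wedge b = \bot$ and $c \vee a = \top$. From $c \wedge b = \bot$ and \ref{eq-fun-M-2} we get $\Box b \wedge \Diamond c = \bot$, whence $p(\Diamond c) = p(\Box b) \wedge p(\Diamond c) = \bot$. Therefore $X \setminus c$ belongs to the generating family of $W_p$, and $c \vee a = \top$ gives $X \setminus c \subseteq a$, so $a \in W_p$ by upward closure.

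Apart from this last direction, where regularity of the compact Hausdorff space $\topo{X}$ and the approximation relation \ref{eq-fun-M-3} are genuinely needed, everything is a formal manipulation of \ref{eq-fun-M-1}--\ref{eq-fun-M-6}; the one point to keep straight is the direction in which the modal operators are monotone relative to the inclusion order on opens (\ref{eq-fun-M-1} makes $\Box$ monotone, \ref{eq-fun-M-4} makes $\Diamond$ monotone).
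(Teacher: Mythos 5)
Your proof is correct and uses the same ingredients as the paper's: the definition of $W_p$ as an up-set, monotonicity of $\Box$ and $\Diamond$ via \ref{eq-fun-M-1}/\ref{eq-fun-M-4}, relations \ref{eq-fun-M-2} and \ref{eq-fun-M-5}, and — for the crucial implication — regularity of $\fun{opn}\topo{X}$ together with \ref{eq-fun-M-3} applied to the directed family $\{b \mid b \wi a\}$. The only (cosmetic) difference is that you prove $p(\Box a)=\top \Rightarrow a \in W_p$ directly by extracting a single $b \wi a$ with $p(\Box b)=\top$, whereas the paper argues the contrapositive by showing $p(\Box a')=\bot$ for every $a' \wi a$; both hinge on exactly the same decomposition of $\Box a$ as a directed join.
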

\begin{proof}
  If $p(\Diamond a) = \bot$ then $X \setminus a \in W_p$. Conversely, Suppose
  $X \setminus a \in W_p$, then there is some $b$ with $p(\Diamond b) = \bot$ and
  $X \setminus b \subseteq X \setminus a$. Therefore $a \subseteq b$ and
  $p(\Diamond a) \leq p(\Diamond b) = \bot$. This proves $X \setminus a \in W_p$
  iff $p(\Diamond a) = \bot$.

  If $a \in W_p$ then there is $X \setminus b \subseteq a$ in $W_p$, so
  $p(\Diamond b) = \bot$. Then $a \cup b = X$, so it follows
  from~\ref{eq-fun-M-5} of Definition~\ref{def-functor-M} that $p(\Box a) = \top$.
  If $a \notin W_p$ and $a' \wi a$, then there exists $b$ with $b \cap a' =
  \emptyset$ and $b \cup a = \topo{X}$. Since $X \setminus b \subseteq a$, set
  set $X \setminus b$ is not in $W_p$ and hence we must have $p(\Diamond b) =
  \top$. As $a' \cap a = \emptyset$ it follows from~\ref{eq-fun-M-2} that
  $p(\Box a') = p(\emptyset) = \bot$. Now we use~\ref{eq-fun-M-3} and the fact
  that $a = \bigveeup \{ a' \mid a' \wi a \}$ (this is true because $\topo{X}$ is
  assumed to be compact Hausdorff so $\fun{opn}\topo{X}$ is compact regular) to find
  \begin{equation*}
    p(\Box a)
      = \bigveeup \{ p(\Box a') \mid a' \wi a \}
      = \bigveeup \{ \bot \mid  a' \wi a \}
      = \bot.
  \end{equation*}
  It follows that $a \in W_p$ iff $p(\Box a) = \top$.
\end{proof}

  We have now aquired sufficient knowledge to prove Theorem~\ref{thm:D-M-dual-obj}.

\begin{proof}[Proof of Theorem~\ref{thm:D-M-dual-obj}]
  We claim that the maps $\zeta$ and $\theta$ define a homeomorphism between
  $\fun{D}_{\cat{kh}}\topo{X}$ and $\fun{pt}(\fun{M}(\fun{opn}\topo{X}))$. First we prove
  that they are each other's inverses, by showing that for all
  $p \in \fun{pt}(\fun{M}(\fun{opn}\topo{X}))$ and $W \in \fun{D}_{\cat{kh}}\topo{X}$ we
  have $p_{W_p} = p$ and $W_{p_W} = W$.

  In order to prove that (the frame homomorphisms) $p$ and $p_{W_p}$ coincide, it
  suffices to show that they coincide on the generators of
  $\fun{M}(\fun{opn}\topo{X})$. By Definition~\ref{def:theta} and Lemma~\ref{lem:DM-2} we have
  \begin{equation*}
    p(\Box a) = \top
      \iff a \in W_p
      \iff p_{W_p}(\Box a) = \top
  \end{equation*}
  and
  \begin{equation*}
    p(\Diamond a) = \bot
      \iff X \setminus a \notin W_p
      \iff p_{W_p}(\Diamond a) = \bot.
  \end{equation*}

  In order to show that $W = W_{p_W}$ it suffices to show that
  $X \setminus a \in W$ iff $X \setminus a \in W_{p_W}$ for all open sets $a$,
  because elements of $\fun{D}_{\cat{kh}}\topo{X}$ are uniquely determined by the
  closed sets they contain. This follows immediately from the definitions and
  Lemma~\ref{lem:DM-2}, as
  \begin{equation*}
    X \setminus a \in W
      \iff p_W(\Diamond a) = \bot
      \iff X \setminus a \in W_{p_W}.
  \end{equation*}
  Therefore $\zeta = \theta^{-1}$.

  We complete the proof by showing that $\zeta$ and $\theta$ are continuous.
  The opens of $\fun{pt}(\fun{M}(\fun{opn}\topo{X}))$ are generated by
  $\widetilde{\Box a} = \{ p \mid p(\Box a) = \top \}$ and
  $\widetilde{\Diamond a} = \{ p \mid p(\Diamond a) = \top \}$, for
  $a \in \fun{\Omega}\topo{X}$. We have
  \begin{align*}
    \theta^{-1}(\tilde{\Box a})
      = \theta^{-1}(\{ p \mid p(\Box a) = \top\})
      = \{ W \in \fun{D}_{\cat{kh}}\topo{X} \mid a \in W \}
      = \dbox a
  \end{align*}
  and similarly $\theta^{-1}(\tilde{\Diamond a}) = \ddiamond a$.
  Continuity of $\theta$ follows from the fact that $\dbox a$ and $\ddiamond a$
  are open in $\fun{D}_{\cat{kh}}\topo{X}$.
  Conversely, the opens of $\fun{D}_{\cat{kh}}\topo{X}$ are generated by $\dbox a$ and
  $\ddiamond a$, where $a$ ranges over $\fun{\Omega}\topo{X}$. It is routine to
  see that $\zeta^{-1}(\dbox a) = \widetilde{\Box a}$ and $\zeta^{-1}(\ddiamond a)
  = \widetilde{\Diamond a}$. This proves continuity of $\zeta$.

  We showed that $\theta$ is a continuous function with continuous inverse $\zeta$,
  hence a homeomorphism. This completes the proof of the theorem.
\end{proof}

\begin{cor}
  The assignment $\fun{D}_{\cat{kh}}$ defines an endofunctor on $\cat{KHaus}$.
\end{cor}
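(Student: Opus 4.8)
The plan is to combine Theorem~\ref{thm:D-M-dual-obj} with the duality of Fact~\ref{fact-pt-opn}. First I would recall that we already know (Proposition~\ref{prop-M-pres-reg} and Theorem~\ref{thm-M-cpt}) that $\fun{M}$ restricts to an endofunctor $\fun{M}_{\cat{kr}}$ on $\cat{KRFrm}$. Composing with the dual equivalence $\cat{KRFrm} \equiv \cat{KHaus}^{\op}$ from Fact~\ref{fact-pt-opn} — with the two halves of the equivalence being $\fun{opn}$ and $\fun{pt}$ — we obtain an endofunctor $\fun{pt} \circ \fun{M}_{\cat{kr}} \circ \fun{opn}$ on $\cat{KHaus}$. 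Theorem~\ref{thm:D-M-dual-obj} tells us that on objects this composite is naturally homeomorphic to $\fun{D}_{\cat{kh}}$, so in particular $\fun{D}_{\cat{kh}}\topo{X} \in \cat{KHaus}$ for every $\topo{X} \in \cat{KHaus}$. Hence $\fun{D}_{\cat{kh}}$ maps objects of $\cat{KHaus}$ to objects of $\cat{KHaus}$.

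Next I would check functoriality. We already know from Lemma~\ref{lem-Df-well} that $\fun{D}_{\cat{kh}}f$ is a well-defined continuous map for any morphism $f$ in $\cat{KHaus}$; and the assignment $W \mapsto \{ a \mid f^{-1}(a) \in W \}$ visibly respects identities and composition, since $\id^{-1} = \id$ and $(g \circ f)^{-1} = f^{-1} \circ g^{-1}$. Combined with the object-level computation above, this shows $\fun{D}_{\cat{kh}}$ is an endofunctor on $\cat{KHaus}$. Strictly, one could instead transport the functor structure of $\fun{pt} \circ \fun{M}_{\cat{kr}} \circ \fun{opn}$ across the family of homeomorphisms $\zeta_{\topo{X}}$, but since $\fun{D}_{\cat{kh}}$ already has its own well-defined action on morphisms, it is cleaner to verify functoriality directly and merely invoke Theorem~\ref{thm:D-M-dual-obj} for the object part.

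The only point requiring genuine care is that Theorem~\ref{thm:D-M-dual-obj} as stated is an object-level statement; to be fully rigorous one wants the homeomorphisms $\zeta_{\topo{X}} : \fun{pt}(\fun{M}(\fun{opn}\topo{X})) \to \fun{D}_{\cat{kh}}\topo{X}$ to be natural in $\topo{X}$, i.e.\ to commute with $\fun{D}_{\cat{kh}}f$ on one side and $\fun{pt}(\fun{M}(\fun{opn}f))$ on the other. This naturality is not strictly needed for the bare statement of the corollary (object preservation plus the elementary functoriality check suffice), but it is worth remarking, and it follows from chasing the definitions of $\zeta$ (via $W_p = {\uparrow}\{ X \setminus a \mid p(\Diamond a) = \bot \}$) and of $\fun{M}$ on morphisms ($\Diamond a \mapsto \Diamond f(a)$). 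I do not expect any real obstacle here — everything is assembled from results already proved — so the proof is essentially a one-line deduction from Theorem~\ref{thm:D-M-dual-obj}, Fact~\ref{fact-pt-opn}, and Lemma~\ref{lem-Df-well}.

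\begin{proof}
  By Proposition~\ref{prop-M-pres-reg} and Theorem~\ref{thm-M-cpt}, the functor $\fun{M}$ restricts to an endofunctor $\fun{M}_{\cat{kr}}$ on $\cat{KRFrm}$. Using the duality $\cat{KRFrm} \equiv \cat{KHaus}^{\op}$ of Fact~\ref{fact-pt-opn}, the composite $\fun{pt} \circ \fun{M}_{\cat{kr}} \circ \fun{opn}$ is an endofunctor on $\cat{KHaus}$; in particular $\fun{pt}(\fun{M}(\fun{opn}\topo{X}))$ is a compact Hausdorff space for every $\topo{X} \in \cat{KHaus}$. By Theorem~\ref{thm:D-M-dual-obj} we have $\fun{pt}(\fun{M}(\fun{opn}\topo{X})) \cong \fun{D}_{\cat{kh}}\topo{X}$, so $\fun{D}_{\cat{kh}}\topo{X} \in \cat{KHaus}$ as well. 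For morphisms, Lemma~\ref{lem-Df-well} shows that $\fun{D}_{\cat{kh}}f : \fun{D}_{\cat{kh}}\topo{X} \to \fun{D}_{\cat{kh}}\topo{X}'$ is a well-defined continuous map whenever $f$ is a morphism in $\cat{KHaus}$, and the defining formula $\fun{D}_{\cat{kh}}f(W) = \{ a \mid f^{-1}(a) \in W \}$ immediately gives $\fun{D}_{\cat{kh}}\id_{\topo{X}} = \id$ and $\fun{D}_{\cat{kh}}(g \circ f) = \fun{D}_{\cat{kh}}g \circ \fun{D}_{\cat{kh}}f$, since $(g \circ f)^{-1} = f^{-1} \circ g^{-1}$. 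Hence $\fun{D}_{\cat{kh}}$ is an endofunctor on $\cat{KHaus}$.
\end{proof}
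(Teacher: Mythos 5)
Your proposal is correct and follows essentially the same route the paper intends: the text preceding the corollary explicitly announces that duality with $\fun{M}_{\cat{kr}}$ (Proposition~\ref{prop-M-pres-reg}, Theorem~\ref{thm-M-cpt}, Fact~\ref{fact-pt-opn}, Theorem~\ref{thm:D-M-dual-obj}) is how one obtains that $\fun{D}_{\cat{kh}}$ lands in $\cat{KHaus}$, with Lemma~\ref{lem-Df-well} already supplying the morphism part. Your added remarks on functoriality and on the naturality of $\zeta$ are sound but not needed beyond what the paper already established.
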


  Theorem~\ref{thm:D-M-dual-obj}
  yields a map $\fun{M}_{\cat{kr}}(\fun{opn}\topo{X}) \to \fun{opn}(\fun{D}_{\cat{kh}}\topo{X})$
  for a compact Hausdorff space $\topo{X}$ given by
  \begin{equation*}
    \begin{tikzcd}[column sep=3em]
      \fun{M}_{\cat{kr}}(\fun{opn}\topo{X})
            \arrow[r]
        & \fun{opn}(\fun{pt}(\fun{M}_{\cat{kr}}(\fun{opn}\topo{X})))
            \arrow[r, "\fun{opn} \phi" above]
        & \fun{opn}(\fun{D}_{\cat{kh}}\topo{X}).
    \end{tikzcd}
    \end{equation*}
  Unravelling the definitions shows that, on generators, it is given by $\Box a \mapsto \dbox a$
  and $\Diamond a \mapsto \ddiamond a$.

\begin{defi}\label{def-eta}
  For every compact Hausdorff space $\topo{X}$ define $\eta_{\topo{X}} :
  \fun{M}_{\cat{kr}}(\fun{opn}\topo{X}) \to \fun{opn}(\fun{D}_{\cat{kh}}
  \topo{X})$ on generators by $\eta_{\topo{X}}(\Box a) = \dbox a$ and
  $\eta_{\topo{X}}(\Diamond a) = \ddiamond a$. By the preceding discussion
  $\eta_{\topo{X}}$ is a well-defined frame isomorphism.
\end{defi}

It turns out that the maps $\eta_{\topo{X}}$ constitute a natural isomorphism.

\begin{lem}\label{lem-mon-dual}
  The collection $\eta = (\eta_{\topo{X}})_{\topo{X} \in \cat{KHaus}}$ forms a
  natural isomorphism.
\end{lem}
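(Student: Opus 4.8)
The plan is to verify the naturality square directly on generators, exploiting the fact that both $\fun{M}_{\cat{kr}}(\fun{opn}\topo{X})$ is presented by generators $\Box a, \Diamond a$ and that $\eta_{\topo{X}}$ is already known (Definition~\ref{def-eta}) to be a frame isomorphism. Fix a continuous map $f : \topo{X} \to \topo{X}'$ between compact Hausdorff spaces. We must show that the square
\begin{equation*}
  \begin{tikzcd}[column sep=4em]
    \fun{M}_{\cat{kr}}(\fun{opn}\topo{X}')
          \arrow[r, "\eta_{\topo{X}'}"]
          \arrow[d, "\fun{M}_{\cat{kr}}(\fun{opn}f)" left]
      & \fun{opn}(\fun{D}_{\cat{kh}}\topo{X}')
          \arrow[d, "\fun{opn}(\fun{D}_{\cat{kh}}f)"] \\
    \fun{M}_{\cat{kr}}(\fun{opn}\topo{X})
          \arrow[r, "\eta_{\topo{X}}"]
      & \fun{opn}(\fun{D}_{\cat{kh}}\topo{X})
  \end{tikzcd}
\end{equation*}
commutes. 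Since $\fun{M}_{\cat{kr}}(\fun{opn}\topo{X}')$ is generated by the elements $\Box a'$ and $\Diamond a'$ with $a' \in \fun{\Omega}\topo{X}'$, and frame homomorphisms out of a presented frame are determined by their action on generators (Remark~\ref{rem-gen-frm-hom}), it suffices to check agreement of the two composites on each such generator.

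The key computational step is to chase $\Box a'$ (and symmetrically $\Diamond a'$) around both paths. Going right-then-down: $\eta_{\topo{X}'}(\Box a') = \dbox a'$, and then $\fun{opn}(\fun{D}_{\cat{kh}}f)(\dbox a') = (\fun{D}_{\cat{kh}}f)^{-1}(\dbox a')$; by the computation already carried out inside the proof of Lemma~\ref{lem-Df-well}, this equals $\dbox f^{-1}(a')$. Going down-then-right: $\fun{M}_{\cat{kr}}(\fun{opn}f)$ sends $\Box a'$ to $\Box\bigl((\fun{opn}f)(a')\bigr) = \Box f^{-1}(a')$ by Definition~\ref{def-functor-M}, and then $\eta_{\topo{X}}(\Box f^{-1}(a')) = \dbox f^{-1}(a')$. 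The two results coincide. The diamond case is identical, using $(\fun{D}_{\cat{kh}}f)^{-1}(\ddiamond a') = \ddiamond f^{-1}(a')$ from Lemma~\ref{lem-Df-well} and $\eta_{\topo{X}}(\Diamond f^{-1}(a')) = \ddiamond f^{-1}(a')$. Hence both composites agree on all generators, so they are equal as frame homomorphisms, establishing naturality; and since each component $\eta_{\topo{X}}$ is an isomorphism by Definition~\ref{def-eta}, $\eta$ is a natural isomorphism.

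I do not expect a serious obstacle here: the only non-trivial inputs are the two identities $(\fun{D}_{\cat{kh}}f)^{-1}(\dbox a') = \dbox f^{-1}(a')$ and $(\fun{D}_{\cat{kh}}f)^{-1}(\ddiamond a') = \ddiamond f^{-1}(a')$, both of which were already recorded in Lemma~\ref{lem-Df-well}, and the action of $\fun{M}$ on morphisms, which is $\Box a \mapsto \Box f(a)$, $\Diamond a \mapsto \Diamond f(a)$ by definition with $f(a) = (\fun{opn}f)(a) = f^{-1}(a)$ in this contravariant setting. The mild point to be careful about is bookkeeping the direction of the maps — $\fun{opn}$ and $\fun{M}_{\cat{kr}}$ are contravariant on the space, so $f : \topo{X} \to \topo{X}'$ induces arrows from the primed to the unprimed side — but once the square is drawn correctly the verification is a routine generator chase with no residual case distinctions.
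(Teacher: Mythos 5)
Your proposal is correct and follows essentially the same route as the paper's own proof: both reduce naturality to a generator chase on $\Box a'$ and $\Diamond a'$, using the identities $(\fun{D}_{\cat{kh}}f)^{-1}(\dbox a') = \dbox f^{-1}(a')$ and $(\fun{D}_{\cat{kh}}f)^{-1}(\ddiamond a') = \ddiamond f^{-1}(a')$ from Lemma~\ref{lem-Df-well} together with the action of $\fun{M}$ on morphisms, and both cite the earlier isomorphism result for the componentwise invertibility. No gaps.
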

\begin{proof}
  It follows from Theorem~\ref{thm:D-M-dual-obj} that each of the
  $\eta_{\topo{X}}$ is an isomorphism, so we only need to show naturality.
  That is, for any morphism $f : \topo{X} \to \topo{X}'$ in $\cat{KHaus}$, the
  following diagram commutes,
  \begin{equation*}
    \begin{tikzcd}[column sep=4em]
      \fun{M}_{\cat{kr}}(\fun{opn}\topo{X})
            \arrow[r, latex-, "\fun{M}_{\cat{kr}}(\fun{opn} f)"]
            \arrow[d, "\eta_{\topo{X}}" left]
        & \fun{M}_{\cat{kr}}(\fun{opn}\topo{X}')
            \arrow[d, "\eta_{\topo{X}'}"] \\
      \fun{opn}(\fun{D}_{\cat{kh}})\topo{X}
            \arrow[r, latex-, "\fun{opn}(\fun{D}_{\cat{kh}}f)" below]
        & \fun{opn}(\fun{D}_{\cat{kh}}\topo{X}')
    \end{tikzcd}
  \end{equation*}
  (Since $\fun{opn}$ is a contravariant functor, the horizontal arrows are
  reversed.) For this, suppose $\Box a'$ is a generator of
  $\fun{M}_{\cat{kr}}(\fun{opn}\topo{X}')$. Then
  \begin{align*}
    \fun{opn}(\fun{D}_{\cat{kh}}f) \circ \eta_{\topo{X}'}(\Box a)
      &= \fun{opn}(\fun{D}_{\cat{kh}}f)(\dbox a)
      &\text{(Definition~\ref{def-eta})} \\
      &= (\fun{D}_{\cat{kh}}f)^{-1}(\dbox a)
      &\text{(Definition of $\fun{opn}$)} \\
      &= \dbox f^{-1}(a)
      &\text{(Lemma~\ref{lem-Df-well})} \\
      &= \eta_{\topo{X}}(\Box f^{-1}(a))
      &\text{(Definition~\ref{def-eta})} \\
      &= \eta_{\topo{X}} \circ \fun{M}_{\cat{kr}}(f^{-1}(\Box a))
      &\text{(Definition of $\fun{M}$)} \\
      &= \eta_{\topo{X}} \circ \fun{M}_{\cat{kr}} (\fun{opn} f)(\Box a)
      &\text{(Definition of $\fun{opn}$)}
  \end{align*}
  and by analogous reasoning $\fun{\Omega}\fun{D}_{\cat{kh}}f \circ
  \eta_{\topo{X}'}(\Diamond a) = \eta_{\topo{X}} \circ \fun{M}_{\cat{kr}}
  (\fun{opn} f)(\Diamond a)$. This proves that the diagram commutes.
\end{proof}

  As an immediate corollary of Lemma~\ref{lem-mon-dual} we obtain:

\begin{thm}\label{thm:mon-full-duality}
  There is a dual equivalence
  \begin{equation*}
    \cat{Alg}(\fun{M}_{\cat{kr}}) \equiv^{\op} \cat{Coalg}(\fun{D}_{\cat{kh}}).
  \end{equation*}
\end{thm}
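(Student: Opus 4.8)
The plan is to derive the dual equivalence $\cat{Alg}(\fun{M}_{\cat{kr}}) \equiv^{\op} \cat{Coalg}(\fun{D}_{\cat{kh}})$ from the known dual equivalence $\cat{KRFrm} \equiv \cat{KHaus}^{\op}$ of Fact~\ref{fact-pt-opn} together with the natural isomorphism $\eta$ produced in Lemma~\ref{lem-mon-dual}. The general principle is that a dual equivalence of base categories that intertwines the two functors (here $\fun{M}_{\cat{kr}}$ on $\cat{KRFrm}$ and $\fun{D}_{\cat{kh}}$ on $\cat{KHaus}$) lifts to a dual equivalence of the corresponding categories of algebras and coalgebras. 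So first I would recall that the contravariant functors $\fun{opn} : \cat{KHaus} \to \cat{KRFrm}$ and $\fun{pt} : \cat{KRFrm} \to \cat{KHaus}$ form the dual equivalence, with unit and counit the natural isomorphisms of Fact~\ref{fact-pt-opn} (in particular the Isbell isomorphism $\topo{X} \cong \fun{pt}(\fun{opn}\topo{X})$ of Remark~\ref{rem-isbell-iso}).

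Next I would define the comparison functor explicitly on objects and morphisms. Given an $\fun{M}_{\cat{kr}}$-algebra $(F, \alpha)$ with $\alpha : \fun{M}_{\cat{kr}}F \to F$, apply $\fun{pt}$ and compose with $\eta$: since $F$ is compact regular it is spatial, write $\topo{X} = \fun{pt}F$, and then $\fun{opn}\topo{X} \cong F$ canonically, so $\eta_{\topo{X}}$ gives a frame isomorphism $\fun{M}_{\cat{kr}}F \cong \fun{opn}(\fun{D}_{\cat{kh}}\topo{X})$. Chasing through, $\fun{pt}(\alpha)$ together with $\eta$ yields a continuous map $\fun{D}_{\cat{kh}}\topo{X} \to \topo{X}$, i.e.\ a $\fun{D}_{\cat{kh}}$-coalgebra structure on $\topo{X}$. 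On morphisms one uses functoriality of $\fun{pt}$ and naturality of $\eta$ (Lemma~\ref{lem-mon-dual}) to check that an algebra morphism is sent to a coalgebra morphism. The inverse functor is built symmetrically from $\fun{opn}$ and $\eta^{-1}$: a $\fun{D}_{\cat{kh}}$-coalgebra $(\topo{X}, \gamma)$ goes to the $\fun{M}_{\cat{kr}}$-algebra $(\fun{opn}\topo{X}, \eta_{\topo{X}} \circ \fun{opn}(\gamma)$ suitably composed$)$ — more precisely, apply $\fun{opn}$ to $\gamma : \topo{X} \to \fun{D}_{\cat{kh}}\topo{X}$ to get $\fun{opn}(\fun{D}_{\cat{kh}}\topo{X}) \to \fun{opn}\topo{X}$, then precompose with $\eta_{\topo{X}} : \fun{M}_{\cat{kr}}(\fun{opn}\topo{X}) \to \fun{opn}(\fun{D}_{\cat{kh}}\topo{X})$.

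Finally I would check that these two functors are mutually quasi-inverse, which reduces to the fact that $\fun{opn}$ and $\fun{pt}$ are already a dual equivalence on the base categories plus the coherence of $\eta$ with the unit/counit of that adjunction. Concretely, the natural isomorphisms $\id_{\cat{KRFrm}} \cong \fun{opn} \circ \fun{pt}$ and $\id_{\cat{KHaus}} \cong \fun{pt} \circ \fun{opn}$ lift to natural isomorphisms between the composites of the comparison functors and the identities; verifying that these lifted transformations are algebra/coalgebra morphisms is where Lemma~\ref{lem-mon-dual} (naturality of $\eta$) and the fact that $\eta$ is a \emph{frame} isomorphism (Definition~\ref{def-eta}) do the essential work. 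The main obstacle, and the only place requiring genuine care, is bookkeeping: ensuring that the square relating $\fun{M}_{\cat{kr}}\circ\fun{opn}$ and $\fun{opn}\circ\fun{D}_{\cat{kh}}$ via $\eta$ is compatible with the (co)unit isomorphisms, so that the structure maps really do transpose correctly and the lifted transformations are (co)algebra morphisms rather than merely morphisms of the underlying objects. Since $\eta$ has already been shown to be a natural isomorphism of functors $\fun{M}_{\cat{kr}} \circ \fun{opn} \To \fun{opn} \circ \fun{D}_{\cat{kh}}$, this is essentially a formal diagram chase, and the theorem follows as the announced immediate corollary.
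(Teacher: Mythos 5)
Your proposal is correct and follows the same route as the paper, which simply records the theorem as an immediate corollary of Lemma~\ref{lem-mon-dual}: the dual equivalence $\cat{KRFrm} \equiv \cat{KHaus}^{\op}$ together with the natural isomorphism $\eta : \fun{M}_{\cat{kr}} \circ \fun{opn} \To \fun{opn} \circ \fun{D}_{\cat{kh}}$ lifts to a dual equivalence of the algebra and coalgebra categories by the standard transposition of structure maps. You have merely written out the diagram chase that the paper leaves implicit.
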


\subsection{Logic for the monotone neighbourhood functor}\label{subsec:mon-lan}

  We give predicate liftings for $\fun{D}_{\cat{kh}}$ that give rise to
  \emph{monotone modal geometric logic.}
  Define $\lambda^{\Box}, \lambda^{\Diamond} :
  \Omega \to \Omega \circ \fun{D}_{\cat{kh}}$ by
  \[
    \lambda_{\topo{X}}^{\Box}(a) = \{ W \in \fun{D}_{\cat{kh}}\topo{X}
                                      \mid a \in W \}, \qquad
    \lambda_{\topo{X}}^{\Diamond}(a) = \{ W \in \fun{D}_{\cat{kh}}\topo{X}
                                          \mid \topo{X} \setminus a \notin W \}.
  \]
  It is easy to see that these are monotone extendable.

  Write $\Box$ and $\Diamond$ for the corresponding modal operators and let
  $(\topo{X}, \gamma, V)$ be a $\fun{D}_{\cat{kh}}$-model. Then we have
  \[
    x \Vdash \Box\phi
      \iff \gamma(x) \in \lambda^{\Box}_{\topo{X}}(\llb \phi \rrb)
      \iff \llb \phi \rrb \in \gamma(x)
  \]
  and similarly $x \Vdash \Diamond\phi$ if $\topo{X} \setminus \llb \phi \rrb
  \notin \gamma(x)$. This is the same as neighbourhood semantics for monotone
  modal logic over a classical base~\cite{Che80,Han03}.

\begin{rem}\label{rem-exm-khaus-top}
  We will see in Example~\ref{exm-lift-monotone} that the functor
  $\fun{D}_{\cat{kh}}$ on $\cat{KHaus}$ can be generalised to an endofunctor of
  $\cat{Top}$ which restricts to $\cat{Sob}$.
\end{rem}

\section{Axioms, Soundness and completeness}\label{sec:ax}

  We define the notion of \emph{one-step axioms} (similar
  to~\cite[Definition~3.8]{KupKurPat04}) and \emph{one-step rules} for a
  collection of predicate liftings. These give rise to axioms and rules for the
  language $\GML(\Phi, \Lambda)$ from Definition~\ref{def:language}, and to an endofunctor
  $\fun{L}$ on the category of frames.
  As in Section~\ref{sec:logic} we let $\cat{C}$ be some full subcategory
  of $\cat{Top}$, $\fun{T}$ an endofunctor on $\cat{C}$, and we view
  $\fun{opn}$ as a contravariant functor $\cat{C} \to \cat{Frm}$ and
  $\fun{\Omega}$ as a contravariant functor $\cat{C} \to \cat{Set}$.

  At the end of Subsection~\ref{subsec:comp}, in order to derive a general completeness
  result, we restrict our attention to a language \emph{without} proposition
  letters. This need not be problematic: proposition letters can be introduced
  via (nullary) predicate liftings. In particular, this means that the
  category of $\fun{T}$-models is the same as the category of $\fun{T}$-coalgebras.

  Ultimately, using the duality proved in Lemma~\ref{lem-mon-dual}, we derive that
  monotone modal geometric logic without proposition letters is sound
  and complete with respect to $\fun{D}_{\cat{kh}}$-coalgebras.

\subsection{Axioms and algebraic semantics}

  Let $\Lambda$ be a collection of predicate liftings for an endofunctor $\fun{T}$
  on $\cat{C}$. Recall that $\Phi$ denotes a set of propositional variables.
  The set of \emph{zero-step formulas} of $\GML(\Phi, \Lambda)$ is simply the subcollection
  $\GL(\Phi)$ of $\GML(\Phi, \Lambda)$.
  The \emph{one-step formulas} in $\GML(\Phi, \Lambda)$ are given recursively by
  \[
    \phi ::= \top \mid \bot
                  \mid \phi \wedge \phi
                  \mid \bigvee_{i \in I} \phi_i
                  \mid \heartsuit^{\lambda}(\pi_1, \ldots, \pi_n),
  \]
  where $\lambda \in \Lambda$ is $n$-ary and $\pi_1, \ldots, \pi_n \in \GL(\Phi)$.

  We define the notions of a one-step axiom and a one-step rule for $\GML(\Phi, \Lambda)$:

\begin{defi}
  A \emph{one-step axiom} for $\GML(\Phi, \Lambda)$ is a consequence pair
  $\alpha \cp \beta$, where $\alpha, \beta$ are one-step formulas.
  A \emph{one-step rule} is an expression of the form
  \begin{equation}\label{eq:rule}
    \frac{a_i \cp b_i \quad i \in I}{\alpha \cp \beta},
  \end{equation}
  where $I$ is some index-class, $a_i, b_i$ are zero-step formulas for $i \in I$ and
  $\alpha, \beta$ are one-step formulas.
\end{defi}

First let us investigate how one-step axioms and rules give rise to an (equationally
defined) endofunctor $\fun{L}_{(\Lambda, \Ax)}$ on $\cat{Frm}$---when no
confusion is likely we drop the subscript and simply write $\fun{L}$.
Given a frame $F$, the frame $\fun{L}F$ can be presented as follows.
As \emph{generators} we take the collection
\[
\Lambda(F)
= \{ \und{\lambda}(v_1, \ldots, v_n) \mid \lambda \in \Lambda \text{ $n$-ary, }v_i \in F \}.
\]
The idea is now to \emph{instantiate} the (meta)variables of the axiomatisation $\Ax$ 
with the elements of $F$.
Zero-step formulas then naturally evaluate to elements of $F$.
Consequently, an axiom $\alpha \cp \beta$ gives rise to a relation $\alpha \leq \beta$,
and a rule as in~\eqref{eq:rule} yields the relation $\alpha \leq \beta$ \emph{conditionally},
that is, we only consider the relation in those cases where $a_i \leq b_i$ for all $i$.

\begin{defi}\label{def:ax-fun}
  For a frame $F$, define $\fun{L}_{(\Lambda, \Ax)}F = \fun{L}F$ to be the frame
  \[
    \fun{Fr}(\Lambda(F))/R,
  \]
  where $R$ is the collection of relations that arises from
  substituting the metavariables from the schemata in $\Ax$ with elements from $F$.
  For a morphism $f : F \to F'$ define $\fun{L}f$ on generators by
  \[
    \fun{L}f(\und{\lambda}(a_1, \ldots, a_n)) = \und{\lambda}(f(a_1), \ldots, f(a_n)).
  \]
\end{defi}

\begin{defi}\label{def:sound}
  A collection of one-step axioms and one-step rules is called \emph{sound} if the assignment
  $\rho : \fun{L} \circ \fun{opn} \to \fun{opn} \circ \fun{T}$,
  given for $X \in \cat{C}$ by
  \[
    \rho_X : \fun{L} \circ \fun{opn}X
             \to \fun{opn} \circ \fun{T}X
           : \und{\lambda}(a_1, \ldots, a_n) \mapsto \lambda(a_1, \ldots, a_n),
  \]
  defines a natural transformation.
\end{defi}

\begin{exa}\label{exm:mon-ax}
  Suppose $\fun{T} = \fun{D}_{\cat{kh}}$, the monotone neighbourhood functor
  on $\cat{KHaus}$, and $\lambda^{\Box}, \lambda^{\Diamond}$ are given as
  in Subsection~\ref{subsec:mon-lan}.
  The following collection of axioms and rules is sound:
  \begin{multicols}{2}
    \begin{enumerate}[($m_1$)]\itemsep=8pt
      \item $\dfrac{a \cp b}{\und{\lambda}^{\Box}(a) \cp \und{\lambda}^{\Box}(b)}$
      \item\label{eq-ax-M2} $\dfrac{a \wedge b \cp \bot}
                   {\und{\lambda}^{\Box}a \wedge \und{\lambda}^{\Diamond}b \cp
                     \bot}$
      \item\label{eq-ax-M3} $\und{\lambda}^{\Box} \bigveeup A \cp
              \bigvee \{ \und{\lambda}^{\Box}a \mid a \in A \}$
      \item $\dfrac{a \cp b}{\und{\lambda}^{\Diamond}(a) \cp \und{\lambda}^{\Diamond}(b)}$
      \item $\dfrac{\top \cp a \vee b}
                  {\top \cp
                   \und{\lambda}^{\Box}a \vee \und{\lambda}^{\Diamond}b}$
      \item\label{eq-ax-M6} $\und{\lambda}^{\Diamond} \bigveeup A \cp
              \bigvee \{ \und{\lambda}^{\Diamond}a \mid a \in A \}$
    \end{enumerate}
  \end{multicols}
  \noindent
  where $A$ is a directed set of $\GL(\Phi)$-formulas (cf.~Subsection~\ref{subsec:gl}).
  To be somewhat more precise, we can view both~\ref{eq-ax-M3} and~\ref{eq-ax-M6}
  to be the consequence of a rule, the premises of which are given by a set of
  consequence pairs witnessing the directedness of $A$.

  To see, for example, that~\ref{eq-ax-M3} is valid in a
  $\fun{D}_{\cat{kh}}$-coalgebra $(\topo{X}, \gamma)$, we need to show that
  \begin{equation}\label{eq:D3-sound}
    \dbox \bigcupup A \subseteq \bigcupup \{ \dbox a \mid a \in A \}
  \end{equation}
  in $\fun{D}_{\cat{kh}}\topo{X}$, where $A$ is a directed set of open
  subsets of $\topo{X}$. So suppose $W \in \dbox \bigcupup A$, then
  $\bigcupup A \in W$. By definition there must be a closed
  $c \subseteq \bigcupup A$ such that $c \in W$. Then $\bigcupup A$ is an
  open cover of $c$ and since $c$ is closed, hence compact, there must be a
  finite subcover. But then there must be a single $a \in A$ such that
  $c \subseteq a$, because $A$ is directed, and as $W$ is up-closed under
  inclusions we have $a \in W$. This implies $W \in \dbox a$, i.e.~$W$ is
  in the right hand side of~\eqref{eq:D3-sound}.

  The functor $\fun{M}$ from Definition~\ref{def-functor-M} is obtained
  from the procedure of Definition~\ref{def:ax-fun}.
\end{exa}

\begin{exa}\label{exm:viet-axioms}
  In a similar manner, one can find a collection of sound axioms and rules
  for the Vietoris functor on $\cat{KHaus}$ such that the procedure from
  Definition~\ref{def:ax-fun} yields the Vietoris locale from~\cite[Section~1]{Joh85}.
\end{exa}

  For the remainder of this section we work in the following setting:

\begin{asm}\label{asm:ax}
  Throughout the remainder of this section, we assume given a collection
  $\Lambda$ of predicate liftings for an endofunctor $\fun{T}$ on $\cat{C}$,
  and a set $\Ax$ of axioms and rules for $\GML(\Phi, \Lambda)$ which is sound
  in the sense of Definition~\ref{def:sound}. We write $\fun{L}$ for the
  endofunctor on $\cat{Frm}$ given by the procedure in Definition~\ref{def:ax-fun}
  and $\rho$ is the associated natural transformation.
\end{asm}

  Given a language $\GML(\Phi, \Lambda)$ and a collection of axiom and rule schemata,
  we write $\ms{GML}(\Phi, \Lambda, \Ax)$ for the (minimal) collection of
  consequence pairs $\phi \cp \psi$ of formulas in $\GML(\Phi, \Lambda)$ which
  contains all axioms and is closed under the rules from geometric logic
  (cf.~Subsection~\ref{subsec:gl}) and $\Ax$, and under the congruence rule
  \[
    \dfrac{\phi_1 \leftrightarrow \psi_1 \quad \cdots \quad \phi_n \leftrightarrow \psi_n}
          {\heartsuit^{\lambda}(\phi_1, \ldots, \phi_n) \leftrightarrow
           \heartsuit^{\lambda}(\psi_1, \ldots, \psi_n)}.
  \]
  We write $\phi \vdash_{\ms{GML}(\Phi, \Lambda, \Ax)} \psi$ if the consequence pair
  $\phi \cp \psi$ is in $\ms{GML}(\Phi, \Lambda, \Ax)$. If no confusion is likely,
  we omit the subscript from the turnstyle and simply write $\phi \vdash \psi$.

  Suppose given $\Phi, \Lambda$ and $\Ax$, write $L$ for the language
  $\GML(\Phi, \Lambda)$ modulo the equivalence relation
  $\dashv\vdash$ given by
  $\phi \dashv\vdash \psi$ iff $\phi \vdash \psi$ and $\psi \vdash \phi$.
  Write $[\phi]$ for the equivalence class in $L$ of a formula $\phi \in \GML(\Lambda)$.
  Then $L$ carries a frame structure by setting
  $[\phi] \vee [\psi] = [\phi \vee \psi]$ and similar for the other connectives.
  Furthermore, we can define an $\fun{L}$-algebra structure $\ell : \fun{L}L \to L$
  via
  \[
    \und{\lambda}([\phi_1], \ldots, [\phi_n])
      \mapsto [\heartsuit^{\lambda}(\phi_1, \ldots, \phi_n)].
  \]
  Recall that $\cat{Alg}(\fun{L})$ denotes the category of $\fun{L}$-algebras
  and $\fun{L}$-algebra morphisms (see Subsection~\ref{subsec:coalg}). We have:

\begin{lem}\label{lem:init}
  The $\fun{L}$-algebra $\mc{L} = (L, \ell)$ is initial in $\cat{Alg}(\fun{L})$.
\end{lem}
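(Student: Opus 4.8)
The plan is to show that $\mc{L} = (L, \ell)$ satisfies the universal property of an initial object: for every $\fun{L}$-algebra $(A, \alpha)$ there is a unique $\fun{L}$-algebra morphism $h : \mc{L} \to (A, \alpha)$. The key observation is that $L$ is, by construction, the Lindenbaum--Tarski algebra of the logic $\ms{GML}(\Phi, \Lambda, \Ax)$, so $L$ is freely generated (as a frame-with-modal-structure) by nothing but the proposition letters $\Phi$ and the operations; since in this section we have restricted to the propositional-letter-free fragment (as flagged in the preamble to Section~\ref{sec:ax}), $L$ is generated purely by $\top$, the frame operations, and the modal operators $\heartsuit^\lambda$. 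Thus any candidate morphism is completely determined on generators, giving uniqueness for free.

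\emph{Existence.} Given an $\fun{L}$-algebra $(A, \alpha)$, I would define $h : \GML(\Phi, \Lambda) \to A$ by recursion on formula structure: $h(\top) = \top_A$, $h(\phi \wedge \psi) = h(\phi) \wedge h(\psi)$, $h(\bigvee_{i} \phi_i) = \bigvee_i h(\phi_i)$, and crucially
\[
  h(\heartsuit^\lambda(\phi_1, \ldots, \phi_n)) = \alpha\big(\und{\lambda}(h(\phi_1), \ldots, h(\phi_n))\big).
\]
The content is then to check that $h$ factors through the quotient $\GML(\Phi, \Lambda) \twoheadrightarrow L$, i.e.\ that $\phi \dashv\vdash \psi$ implies $h(\phi) = h(\psi)$. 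This is proved by induction on the derivation of $\phi \cp \psi$ in $\ms{GML}(\Phi, \Lambda, \Ax)$. The identity, cut, conjunction and disjunction rules and frame distributivity are handled because $A$ is a frame; the interesting cases are the axioms and rules from $\Ax$. Here I would use that $\alpha : \fun{L}A \to A$ is a frame homomorphism out of $\fun{L}A = \fun{Fr}(\Lambda(A))/R$: every axiom $\alpha' \cp \beta'$ in $\Ax$, instantiated with the elements $h(\phi_i) \in A$, becomes one of the defining relations in $R$, hence holds in $\fun{L}A$ after quotienting, and is therefore respected by $\alpha$; one-step rules are handled the same way, using that their premises — being zero-step consequence pairs — translate under $h$ into inequalities in $A$ that activate the corresponding conditional relation. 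Passing $h$ to the quotient yields a frame homomorphism $\ov{h} : L \to A$, and the defining clause for $h$ on modal formulas is exactly the statement that $\ov{h}$ commutes with $\ell$ and $\alpha$, i.e.\ $\ov{h} \circ \ell = \alpha \circ \fun{L}\ov{h}$, so $\ov{h}$ is an $\fun{L}$-algebra morphism.

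\emph{Uniqueness.} If $g : \mc{L} \to (A, \alpha)$ is any $\fun{L}$-algebra morphism, then $g$ is a frame homomorphism commuting with the algebra maps, so $g([\top]) = \top_A$, $g$ preserves finite meets and arbitrary joins, and $g([\heartsuit^\lambda(\vec\phi)]) = g(\ell(\und\lambda([\vec\phi]))) = \alpha(\fun{L}g(\und\lambda([\vec\phi]))) = \alpha(\und\lambda(g([\vec\phi])))$. Since every element of $L$ is the class of a formula built from $\top$ by $\wedge$, $\bigvee$ and the $\heartsuit^\lambda$, a straightforward structural induction shows $g = \ov{h}$.

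\emph{Main obstacle.} The only genuinely delicate point is the inductive step establishing that $h$ respects provable equivalence — specifically, verifying that each schema in $\Ax$, once its metavariables are instantiated by images $h(\phi_i)$ of actual formulas, corresponds precisely to a relation in the presentation of $\fun{L}A$, so that $\alpha$ (as a frame morphism on that presentation) is forced to validate it. One must be careful about the conditional nature of one-step rules — the relation $\alpha \leq \beta$ is imposed in $\fun{L}A$ only when the instantiated premises $a_i \leq b_i$ hold in $A$ — and to check that $h$ sends the zero-step premises to exactly those inequalities. Everything else is the bookkeeping of a Lindenbaum--Tarski argument.
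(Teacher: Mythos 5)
The paper states Lemma~\ref{lem:init} without proof, so there is nothing to compare against; your argument is the expected Lindenbaum--Tarski one and it is correct. The recursion defining $h$, the induction over derivations showing that $h$ respects $\dashv\vdash$ (with the frame rules absorbed by $A$ being a frame and the $\Ax$-instances absorbed by the defining relations of $\fun{L}A = \fun{Fr}(\Lambda(A))/R$ and monotonicity of $\alpha$), and the uniqueness argument by structural induction are all sound. One point deserves emphasis: your appeal to the proposition-letter-free setting is not just a convenience but is \emph{necessary} for the lemma to be true --- if $\Phi \neq \emptyset$ and the letters are genuine (not encoded as nullary liftings), then $L$ contains the classes $[p]$ as free generators and admits many morphisms into a given $\fun{L}$-algebra (e.g.\ with $\Lambda = \Ax = \emptyset$ and $\Phi = \{p\}$, $L$ is the free frame on one generator, not the initial frame $2$). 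The paper only formally imposes $\Phi = \emptyset$ in Subsection~\ref{subsec:comp}, after the lemma is stated, so you have in effect supplied a hypothesis the paper leaves implicit; this is the right reading, since it is the one under which the lemma is used in Proposition~\ref{prop:comp4}.
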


  Moreover:

\begin{lem}\label{lem:vdash-init}
  For any two formulas $\phi, \psi$ we have $\phi \vdash \psi$ iff $[\phi] \leq [\psi]$
  in $\mc{L}$.
\end{lem}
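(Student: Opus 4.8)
The plan is to exploit the fact that $L$ is by construction the Lindenbaum--Tarski algebra of the logic $\ms{GML}(\Phi,\Lambda,\Ax)$, so that the ordering $[\phi]\leq[\psi]$ simply \emph{is} derivability. More precisely, I would argue as follows. The relation $\dashv\vdash$ is a congruence with respect to all the operations $\top$, $\wedge$, $\bigvee$ and the $\heartsuit^{\lambda}$, because the geometric rules together with the cut rule and the congruence rules built into $\ms{GML}$ guarantee that provable equivalents may be substituted for one another; hence the quotient $L$ is well defined as a frame and carries the $\fun{L}$-algebra structure $\ell$ described above. The order on this frame is the one induced by the lattice operations: $[\phi]\leq[\psi]$ iff $[\phi]\wedge[\psi]=[\phi]$, i.e.\ iff $[\phi\wedge\psi]=[\phi]$, i.e.\ iff $\phi\wedge\psi\dashv\vdash\phi$.

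So the core of the proof reduces to the purely syntactic equivalence
\[
  \phi\vdash\psi \quad\text{iff}\quad \phi\wedge\psi\dashv\vdash\phi .
\]
The right-to-left direction: from $\phi\wedge\psi\vdash\phi$ and $\phi\vdash\phi\wedge\psi$, using the conjunction rule $\phi\wedge\psi\cp\psi$ and cut we get $\phi\vdash\psi$. The left-to-right direction: suppose $\phi\vdash\psi$; we always have $\phi\wedge\psi\cp\phi$ by a conjunction axiom, and conversely from $\phi\cp\phi$ (identity) and $\phi\cp\psi$ (hypothesis) the rule $\dfrac{\phi\cp\psi\quad\phi\cp\chi}{\phi\cp\psi\wedge\chi}$ (with $\chi:=\phi$) yields $\phi\cp\psi\wedge\phi$, hence $\phi\cp\phi\wedge\psi$ modulo commutativity of $\wedge$ (itself derivable from the conjunction rules). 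Thus $\phi\wedge\psi\dashv\vdash\phi$, which by definition of the frame operations on $L$ means exactly $[\phi]\wedge[\psi]=[\phi]$, i.e.\ $[\phi]\leq[\psi]$ in $\mc{L}$.

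I do not expect any genuine obstacle here; the lemma is essentially a bookkeeping statement recording that the frame order on the Lindenbaum--Tarski algebra coincides with the consequence relation. The one point that deserves care is to confirm that the putative frame operations on $L$ are in fact well defined, i.e.\ that $\dashv\vdash$ respects $\wedge$, $\bigvee$ and each $\heartsuit^{\lambda}$, and that the frame axioms (in particular frame distributivity) hold in $L$ — but all of these are immediate from the corresponding axioms and rules of $\ms{GML}$ and have implicitly been used already in asserting that ``$L$ carries a frame structure.'' Given that, the chain of equivalences above completes the argument.
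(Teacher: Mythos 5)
Your proof is correct: the paper states this lemma without proof, treating it as the routine Lindenbaum--Tarski bookkeeping fact, and your argument (reducing $[\phi]\leq[\psi]$ to $[\phi\wedge\psi]=[\phi]$, i.e.\ to $\phi\wedge\psi\dashv\vdash\phi$, and then deriving both directions from the conjunction rules, identity and cut) is exactly the argument the paper implicitly relies on. The one caveat you flag — well-definedness of the operations on $L$ — is likewise assumed without comment in the paper, and for the order-theoretic claim only the finite-meet case is needed, which follows immediately from cut and the conjunction rules.
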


  The initial $\fun{L}$-algebra $\mc{L}$ gives rise to an interpretation of
  $\GML(\Phi, \Lambda)$ in every $\fun{L}$-algebra:
  The interpretation of a formula $\phi$ in $\mc{A} = (A, \alpha) \in \cat{Alg}(\fun{L})$
  is given by $\llp \phi \rrp_{\mc{A}} = i_{\mc{A}}([\phi])$, where
  $i_{\mc{A}}$ is the unique $\fun{L}$-algebra morphism $\mc{L} \to \mc{A}$.
  The interpretation is related to the semantics via the \emph{complex algebra}:

\begin{defi}
  The \emph{complex $\fun{L}$-algebra} of a $\fun{T}$-coalgebra $\mf{X} = (X, \gamma)$ is
  $\mf{X}^+ = (\fun{opn}X, \fun{opn}\gamma \circ \rho_X)$,
  where $\rho$ is the natural transformation from Definition~\ref{def:sound}.
\end{defi}

  We can view the interpretation of a formula $\phi$ in a $\fun{T}$-coalgebra as
  an element of its complex algebra. Examination of the definitions shows that
  \begin{equation}\label{eq:int-alg-coalg}
    \llb \phi \rrb^{\mf{X}} = \llp \phi \rrp_{\mf{X}^+}.
  \end{equation}
  Furthermore, we have $[\phi] \leq [\psi]$ if and only if
  $\llp \phi \rrp_{\mc{A}} \leq \llp \psi \rrp_{\mc{A}}$ for all $\fun{L}$-algebras
  $\mc{A}$.
  Soundness of the logic now follows from soundness of the axioms: Suppose
  $\phi \vdash \psi$, then $[\phi] \leq [\psi]$ in $\mc{L}$ and hence
  $\llp \phi \rrp_{\mc{A}} \leq \llp \psi \rrp_{\mc{A}}$ in any $\fun{L}$-algebra
  $\mc{A}$. Therefore, by the observation from~\eqref{eq:int-alg-coalg} we have
  $\llb \phi \rrb^{\mf{X}} \subseteq \llb \psi \rrb^{\mf{X}}$ for every
  $\mf{X} \in \cat{Mod}(\fun{T})$ hence $\phi \Vdash_{\fun{T}} \psi$.

\subsection{Completeness}\label{subsec:comp}
  We keep working within the assumptions of~\ref{asm:ax}.
  In order to prove completeness with respect to $\cat{Coalg}(\fun{T})$, we want
  to show that
  \[
    \phi \Vdash_{\fun{T}} \psi \quad\text{ implies }\quad \phi \vdash \psi.
  \]
  That is, if $\phi \Vdash_{\fun{T}} \psi$ then
  $(\phi, \psi) \in \ms{GML}(\Lambda, \Ax)$.
  By Lemma~\ref{lem:vdash-init} it suffices to show that $[\phi] \leq [\psi]$
  in the initial $\fun{L}$-algebra $\mc{L} = (L, \ell)$ whenever
  $\phi \Vdash_{\fun{T}} \psi$.

  If $\phi \Vdash_{\fun{T}} \psi$, then we know that $\llp \phi \rrp_{\mf{X}^+}
  \leq \llp \psi \rrp_{\mf{X}^+}$ in every complex algebra $\mf{X}^+$ for
  $\mf{X} \in \cat{Mod}(\fun{T})$. However, there is no guarantee that $\mc{L}$
  should be the complex algebra of some $\fun{T}$-model, so we do not automatically
  get completeness.

  The next proposition shows that in order to prove completeness it
  suffices to find a $\fun{T}$-coalgebra $\mf{X}$ and an $\fun{L}$-algebra morphism
  $h : \mf{X}^+ \to \mc{L}$.

\begin{prop}\label{prop:comp4}
  If there exists a $\fun{T}$-coalgebra $\mf{X}$ and an
  $\fun{L}$-algebra morphism $h : \mf{X}^+ \to \mc{L}$, then
  \[
    \phi \Vdash_{\fun{T}} \psi \quad\text{ implies }\quad \phi \vdash \psi.
  \]
\end{prop}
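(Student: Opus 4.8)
The plan is to exploit the initiality of $\mc{L}$ in $\cat{Alg}(\fun{L})$ (Lemma~\ref{lem:init}) together with the reformulation of provability as an inequality in $\mc{L}$ (Lemma~\ref{lem:vdash-init}). Recall that in the present proposition-letter-free setting $\fun{T}$-models and $\fun{T}$-coalgebras coincide, so the hypothesised $\mf{X}$ is in particular an object of $\cat{Mod}(\fun{T})$. The first step is to form the unique $\fun{L}$-algebra morphism $i_{\mf{X}^+} : \mc{L} \to \mf{X}^+$ supplied by initiality. Then $h \circ i_{\mf{X}^+}$ is an endomorphism of $\mc{L}$ in $\cat{Alg}(\fun{L})$, and initiality (now used as a uniqueness principle) forces $h \circ i_{\mf{X}^+} = \id_{\mc{L}}$. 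This retraction is the engine of the argument.

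Next I would unwind the assumption $\phi \Vdash_{\fun{T}} \psi$. Instantiated at the single model $\mf{X}$ it yields $\llb \phi \rrb^{\mf{X}} \subseteq \llb \psi \rrb^{\mf{X}}$, that is, an inequality in the frame $\fun{\Omega}X$ underlying $\mf{X}^+$. By the identification \eqref{eq:int-alg-coalg} of $\llb \phi \rrb^{\mf{X}}$ with $\llp \phi \rrp_{\mf{X}^+} = i_{\mf{X}^+}([\phi])$ (and likewise for $\psi$), this reads $i_{\mf{X}^+}([\phi]) \leq i_{\mf{X}^+}([\psi])$. Since $h$ is a frame homomorphism, hence order preserving, applying it preserves the inequality; and because $h \circ i_{\mf{X}^+} = \id_{\mc{L}}$, the outcome is exactly $[\phi] \leq [\psi]$ in $\mc{L}$. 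Lemma~\ref{lem:vdash-init} then gives $\phi \vdash \psi$, which finishes the proof.

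I expect no genuine obstacle here: the substantive content has been front-loaded into the hypothesis (the existence of $h$) and into Lemmas~\ref{lem:init} and~\ref{lem:vdash-init}. The one point that deserves a moment's care is the double use of initiality --- once to obtain $i_{\mf{X}^+}$, and once to conclude that its composite with $h$ can only be the identity --- since it is this retraction, rather than any concrete computation, that transports the semantic inequality back into $\mc{L}$.
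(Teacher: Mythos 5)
Your proposal is correct and follows essentially the same route as the paper's own proof: form the unique morphism $i : \mc{L} \to \mf{X}^+$, use initiality to conclude $h \circ i = \id_{\mc{L}}$, transport the semantic inequality $\llp \phi \rrp_{\mf{X}^+} \leq \llp \psi \rrp_{\mf{X}^+}$ back to $[\phi] \leq [\psi]$ via the monotone retraction $h$, and conclude with Lemma~\ref{lem:vdash-init}. No gaps.
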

\begin{proof}
  Write $i$ for the unique $\fun{L}$-algebra morphism $\mc{L} \to \mf{X}^+$.
  Then initiality of $\mc{L}$ forces $h \circ i = \id_{\mc{L}}$.
  Suppose $\phi \Vdash_{\fun{T}} \psi$, then we have
  $\llb \phi \rrb^{\mf{X}} \subseteq \llb \psi \rrb^{\mf{X}}$, which in turn
  implies $\llp \phi \rrp_{\mf{X}^+} \leq \llp \psi \rrp_{\mf{X}^+}$
  by~\eqref{eq:int-alg-coalg}.
  It follows from monotonicity of $h$ that
  \[
    [\phi] = \llp \phi \rrp_{\mc{L}}
           = h \circ i(\llp \phi \rrp_{\mc{L}})
           = h(\llp \phi \rrp_{\mf{X}^+})
           \leq h(\llp \psi \rrp_{\mf{X}^+})
           = h \circ i(\llp \phi \rrp_{\mc{L}})
           = \llp \phi \rrp_{\mc{L}}
           = [\psi].
  \]
  This proves the proposition.
\end{proof}

  Ideally, one would use a duality of functors to establish that such an $\mf{X}$
  as in Proposition~\ref{prop:comp4} exists. For example,
  this is how one can prove completeness for (classical) normal modal logic:
  The Vietoris functor on $\cat{Stone}$ is the Stone dual of the endofunctor
  on $\cat{BA}$ which determines the logic. However, since we do not
  start with a dual equivalence (like Stone duality) but rather with a dual
  \emph{adjunction}, endofunctors on both categories cannot be dual.

  To remedy this, we will make use of the fact that the dual adjunction between
  $\cat{Top}$ and $\cat{Frm}$ restricts to several dual equivalences
  (see Fact~\ref{fact-pt-opn}). Note that this does not yet guarantee that the initial
  $\fun{L}$ algebra from Lemma~\ref{lem:init} is the complex algebra of some
  $\fun{T}$-coalgebra! Indeed, its underlying frame need not be in the
  restricted dual equivalence. We will see that some of the dual
  equivalences are good enough to ``imitate'' frames that fall outside it.

  We now investigate under which conditions we can use Proposition~\ref{prop:comp4}.
  As announced, we shall restrict our attention to the case where
  $\Phi = \emptyset$, i.e.~we work in a language without proposition letters.
  This means that there is no need for having valuations, hence
  $\cat{Mod}(\fun{T})$ is simply (isomorphic to) $\cat{Coalg}(\fun{T})$.
  If $\Phi = \emptyset$, we shall write $\GML(\Lambda)$ instead of
  $\GML(\Phi, \Lambda)$ and $\ms{GML}(\Lambda, \Ax)$ for
  $\ms{GML}(\Phi, \Lambda, \Ax)$.
  The absence of proposition letters need not pose a big deficiency:
  proposition letters can simply be introduced via predicate liftings.

  Specifically, we look for a subcategory $\cat{A}$ of
  $\cat{Alg}(\fun{L})$ such that:
  \begin{enumerate}
    \item\label{eq:strat-co} Every $\fun{L}$-algebra is the codomain of
          an $\fun{L}$-algebra morphism
          whose domain is in $\cat{A}$;
    \item\label{eq:strat-com} Every algebra in $\cat{A}$ is the complex
          algebra corresponding to
          some $\fun{T}$-coalgebra.
  \end{enumerate}
  Clearly, if this is the case we can employ Proposition~\ref{prop:comp4}.

  For the first item, it turns out useful to consider \emph{coreflective
  subcategories} of $\cat{Frm}$.
  These are full subcategories $\cat{F}$ of $\cat{Frm}$ such that the inclusion
  functor $\cat{F} \to \cat{Frm}$ has a right adjoint.
  We recall an alternative definition from~\cite{AdaHerStr90}
  (which is in turn equivalent to the one in~\cite[Section~IV.3]{Lan71}).

\begin{defiC}[{\cite[Definition~4.25]{AdaHerStr90}}]
  A subcategory $\cat{B}$ of a category $\cat{A}$ is called \emph{coreflective}
  if for every $A \in \cat{A}$ there exists a $B \in \cat{B}$ and a morphism
  $c : B \to A$  (called a coreflection) such that for every morphism $f : B' \to A$
  in $\cat{A}$ with $B' \in \cat{B}$, there exists a unique $f' : B' \to B$ such that
  \[
  \begin{tikzcd}
    B'    \arrow[d, "f'" left]
          \arrow[dr, "f"]
      & \\
    B     \arrow[r, "c"]
      & A
  \end{tikzcd}
  \]
  commutes.
\end{defiC}

  We can now formulate simple conditions that guarantee item~\eqref{eq:strat-co}
  to hold.

\begin{lem}
  Let $\cat{F}$ be a coreflective subcategory of $\cat{Frm}$ and suppose $\fun{L}$
  restricts to an endofunctor $\fun{L'}$ on $\cat{F}$. Then for each
  $\fun{L}$-algebra $(A, \alpha)$ there exists an $\fun{L}$-algebra $(B, \beta)$
  with $B \in \cat{F}$ and an $\fun{L}$-algebra morphism $(B, \beta) \to (A, \alpha)$.
\end{lem}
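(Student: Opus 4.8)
The plan is to lift the coreflection of $A$ into $\cat{F}$ to an $\fun{L}$-algebra morphism. First I would invoke the hypothesis that $\cat{F}$ is coreflective in $\cat{Frm}$ to obtain a coreflection $c : B \to A$ with $B \in \cat{F}$; by the defining universal property, every frame homomorphism from an object of $\cat{F}$ into $A$ factors uniquely through $c$. The goal is to equip $B$ with a suitable frame homomorphism $\beta : \fun{L}B \to B$ making $c$ an $\fun{L}$-algebra morphism.

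Next I would observe that, since $\fun{L}$ restricts to an endofunctor $\fun{L'}$ on $\cat{F}$, we have $\fun{L}B = \fun{L'}B \in \cat{F}$. Consider the composite frame homomorphism $\alpha \circ \fun{L}c : \fun{L}B \to A$. Because its domain $\fun{L}B$ lies in $\cat{F}$, the universal property of the coreflection $c$ supplies a unique frame homomorphism $\beta : \fun{L}B \to B$ with $c \circ \beta = \alpha \circ \fun{L}c$. As $\cat{F}$ is a full subcategory, $\beta$ is automatically a morphism of $\cat{F}$, so $(B, \beta)$ is an $\fun{L}$-algebra (in fact an $\fun{L'}$-algebra) whose carrier lies in $\cat{F}$.

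Finally, the equation $c \circ \beta = \alpha \circ \fun{L}c$ is exactly the condition that $c : (B, \beta) \to (A, \alpha)$ is a morphism of $\fun{L}$-algebras, which yields the claim. I do not anticipate a genuine obstacle: the argument is a short diagram chase, and the only mildly delicate point is noting that $\fun{L}B$ again lands in $\cat{F}$ so that the universal property of the coreflection may be applied — but this is precisely what the assumption that $\fun{L}$ restricts to $\cat{F}$ provides.
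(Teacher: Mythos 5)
Your proposal is correct and is essentially the paper's own argument: take the coreflection $c : B \to A$, note that $\fun{L}B$ lands in $\cat{F}$ because $\fun{L}$ restricts to $\cat{F}$, and use the universal property of the coreflection applied to $\alpha \circ \fun{L}c : \fun{L}B \to A$ to obtain $\beta : \fun{L}B \to B$ with $c \circ \beta = \alpha \circ \fun{L}c$, which is precisely the statement that $c$ is an $\fun{L}$-algebra morphism. The paper states this more tersely as a commuting square, but the content is identical.
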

\begin{proof}
  Let $c : B \to A$ be the coreflection of $A$ in $\cat{F}$. Then we have a diagram
  \[
    \begin{tikzcd}
      \fun{L}B
            \arrow[r, "\fun{L}c"]
            \arrow[d, dashed, "\beta"]
        & \fun{L}A
            \arrow[d, "\alpha"] \\
      B \arrow[r, "c"]
        & A
    \end{tikzcd}
  \]
  where $\fun{L}B$ is in $\cat{F}$ by assumption. By definition there exists
  $\beta : \fun{L}B \to B$ making the diagram commute.
\end{proof}

  Thus, if $\cat{F}$ is a coreflective subcategory of $\cat{Frm}$ and $\fun{L}$
  restricts to $\cat{F}$, then item~\eqref{eq:strat-co} above is satisfied.
  Examples of such coreflective subcategories are
  $\cat{RFrm}$~\cite[Section~4.2]{PicPulToz04} and $\cat{KRFrm}$~\cite[Proposition~3]{BanMul80}.

  Now suppose that all objects in $\cat{F}$ are spatial and write $\cat{T}$ for the subcategory
  of $\cat{Top}$ which is dually equivalent to $\cat{F}$. Furthermore,
  assume that $\cat{T}$ is a subcategory of $\cat{C}$ (for otherwise $\fun{T}$
  is not defined for every space in $\cat{T}$).

  If the restriction $\fun{L'}$ of $\fun{L}$ is dual to the restriction
  $\fun{T'}$ of $\fun{T}$ to $\cat{T}$, then we know that
  $\cat{Coalg}(\fun{T'}) \equiv^{\op} \cat{Alg}(\fun{L'})$. In particular,
  this means that every element of $\cat{Alg}(\fun{L'})$ is the complex
  algebra of some $\fun{T'}$-coalgebra (hence of some $\fun{T}$-coalgebra),
  i.e.~item~\eqref{eq:strat-com} is satisfied.
  Summarising:

\begin{thm}\label{thm:cor-com}
  Suppose there exists a coreflective subcategory $\cat{F}$ of $\cat{Frm}$
  such that
  \begin{itemize}
    \item The dual $\cat{T}$ of $\cat{F}$ is a subcategory of $\cat{C}$;
    \item $\fun{L}$ restricts to an endofunctor $\fun{L'}$ on $\cat{F}$;
    \item $\fun{T}$ restricts to an endofunctor $\fun{T'}$ on $\cat{F}^{\op}$;
    \item $\fun{L'}$ is dual to $\fun{T'}$;
  \end{itemize}
  Then $\ms{GML}(\Lambda, \Ax)$
  is complete with respect to $\cat{Coalg}(\fun{T})$,
  in the sense that for for every consequence pair $\phi \cp \psi$ of closed
  $\GML$-formulas we have that
  \[
    \phi \Vdash_{\fun{T}} \psi
      \quad\text{implies}\quad
      \phi \cp \psi \in \ms{GML}(\Lambda, \Ax).
  \]
\end{thm}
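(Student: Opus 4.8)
The plan is to verify the hypotheses of the two-part strategy laid out just before the theorem, namely items~\eqref{eq:strat-co} and~\eqref{eq:strat-com}, and then invoke Proposition~\ref{prop:comp4}. Concretely, write $\cat{T}$ for the dual of the coreflective subcategory $\cat{F}$, and let $\fun{L}'$ and $\fun{T}'$ be the asserted restrictions, so that by assumption $\cat{Coalg}(\fun{T}') \equiv^{\op} \cat{Alg}(\fun{L}')$ via (the restriction of) the adjunction $\fun{opn} \dashv \fun{pt}$ from Fact~\ref{fact-pt-opn}. The goal is: given closed formulas $\phi, \psi$ with $\phi \Vdash_{\fun{T}} \psi$, produce a $\fun{T}$-coalgebra $\mf{X}$ together with an $\fun{L}$-algebra morphism $h : \mf{X}^+ \to \mc{L}$, where $\mc{L} = (L,\ell)$ is the initial $\fun{L}$-algebra from Lemma~\ref{lem:init}; then Proposition~\ref{prop:comp4} delivers $\phi \vdash \psi$, i.e.~$\phi \cp \psi \in \ms{GML}(\Lambda, \Ax)$.

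\textbf{Step 1: coreflect the initial algebra into $\cat{F}$.}
Since $\cat{F}$ is coreflective in $\cat{Frm}$ and $\fun{L}$ restricts to $\fun{L}'$ on $\cat{F}$, the lemma immediately preceding the theorem applies to the $\fun{L}$-algebra $\mc{L}$: there is an $\fun{L}$-algebra $(B, \beta)$ with $B \in \cat{F}$ and an $\fun{L}$-algebra morphism $c : (B, \beta) \to \mc{L}$ (built from the coreflection $c : B \to L$). So item~\eqref{eq:strat-co} is witnessed for $\mc{L}$ specifically.

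\textbf{Step 2: dualise $(B,\beta)$ to a $\fun{T}$-coalgebra.}
Because $B \in \cat{F}$ is spatial, it corresponds under the duality $\cat{F} \equiv \cat{T}^{\op}$ to a space $\topo{X} := \fun{pt}(B) \in \cat{T} \subseteq \cat{C}$, and the $\fun{L}'$-algebra structure $\beta : \fun{L}'B \to B$ dualises — using that $\fun{L}'$ is dual to $\fun{T}'$ — to a $\fun{T}'$-coalgebra structure $\gamma : \topo{X} \to \fun{T}'\topo{X} = \fun{T}\topo{X}$. Thus $\mf{X} := (\topo{X}, \gamma)$ is a $\fun{T}$-coalgebra whose complex algebra $\mf{X}^+$ is (isomorphic to) $(B,\beta)$; this is precisely the content of item~\eqref{eq:strat-com}, and it requires checking that the natural transformation $\rho$ used to define $\mf{X}^+$ is the one transported across the duality — i.e.~that ``complex algebra of the dual coalgebra'' agrees with ``the algebra we started from''. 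Composing the isomorphism $\mf{X}^+ \cong (B,\beta)$ with $c : (B,\beta) \to \mc{L}$ yields the desired $\fun{L}$-algebra morphism $h : \mf{X}^+ \to \mc{L}$, and Proposition~\ref{prop:comp4} finishes the argument.

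\textbf{The main obstacle} is Step~2: making precise that the duality of functors $\fun{L}' \mathrel{\text{-dual-}} \fun{T}'$, together with the object-level duality $\cat{F} \equiv \cat{T}^{\op}$, really does convert an $\fun{L}'$-algebra into a $\fun{T}'$-coalgebra whose \emph{complex algebra} (built via $\fun{\Omega}\gamma \circ \rho_{\topo{X}}$) is the original algebra back again, rather than merely some isomorphic variant whose structure map might differ. This hinges on the compatibility of the natural transformation $\rho : \fun{L} \circ \fun{\Omega} \to \fun{\Omega} \circ \fun{T}$ (Definition~\ref{def:sound}) with whatever natural isomorphism witnesses the duality $\fun{L}' \cong \fun{\Omega} \fun{T}' \fun{pt}$ (or dually); one must trace the definitions of $\fun{L}$ on morphisms (Definition~\ref{def:ax-fun}) and of $\rho$ on generators and check the relevant square commutes. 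The remaining ingredients — coreflectivity giving Step~1, spatiality of $\cat{F}$ giving a genuine space in $\cat{C}$, and Proposition~\ref{prop:comp4} itself — are already in hand, so this bookkeeping around $\rho$ and the duality is the only real work.
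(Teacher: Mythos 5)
Your proposal is correct and follows essentially the same route as the paper: coreflect the initial algebra $\mc{L}$ into $\cat{F}$ via the preceding lemma, use the duality $\fun{L'} \dashv\vdash \fun{T'}$ to realise the resulting algebra as the complex algebra of a $\fun{T}$-coalgebra, and conclude with Proposition~\ref{prop:comp4}. The compatibility of $\rho$ with the duality that you flag as the "main obstacle" is exactly the point the paper absorbs into the phrase "$\fun{L'}$ is dual to $\fun{T'}$" without further elaboration.
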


  Let us apply this theorem to normal and monotone modal geometric logic.

\begin{exa}\label{exm:mon-com}
  We denote by $\ms{M}$ the smallest collection of consequence pairs closed
  under the axioms and rules from geometric logic (see Subsection~\ref{subsec:gl})
  and the ones presented in Example~\ref{exm:mon-ax}. (Note that the congruence rules follow from
  the monotonicity rules.) It follows from the duality between
  $\fun{D}_{\cat{kh}}$ and $\fun{M}_{\cat{kr}}$, the fact that $\cat{KRFrm}$ is
  a coreflective subcategory of $\cat{Frm}$, and Theorem~\ref{thm:cor-com}
  that $\ms{M}$ is (sound and) complete with respect to
  $\cat{Coalg}(\fun{D}_{\cat{kh}})$.
\end{exa}

\begin{exa}
  Similar to Example~\ref{exm:mon-com}, one can prove that \emph{normal}
  geometric modal logic $\ms{N}$ is sound and complete with respect to
  $\cat{Coalg}(\fun{V}_{\cat{kh}})$. In this case, the axioms and rules of
  $\ms{N}$ are the ones from geometric logic, those introduced for
  positive modal logic in~\cite[Section~2]{Dun95}, and Scott-continuity.
  We leave the details to the reader.
\end{exa}

\section{Lifting logics from \texorpdfstring{$\cat{Set}$}{Set} to \texorpdfstring{$\cat{Top}$}{Top}}\label{sec:lift}

  In~\cite[Section~4]{KupKurPat05} the authors give a method to lift a
  $\cat{Set}$-functor $\fun{T} : \cat{Set} \to \cat{Set}$, together with a
  collection of predicate liftings $\Lambda$ for $\fun{T}$, to an endofunctor on
  $\cat{Stone}$. We adapt their approach to obtain an endofunctor
  $\hat{\fun{T}}_{\Lambda}$ on $\cat{Top}$, and a collection of Scott-continuous
  open predicate liftings $\hat{\Lambda}$ for $\hat{\fun{T}}_{\Lambda}$.
  In this section the notation $\bigveeup$ is used for directed joins,
  i.e.~joins over directed sets.

\subsection{Lifting functors from \texorpdfstring{$\cat{Set}$}{Set} to \texorpdfstring{$\cat{Top}$}{Top}}
  Let $\fun{T}$ be an endofunctor on $\cat{Set}$ and $\Lambda$ a collection of
  predicate liftings for $\fun{T}$. To define the action of
  $\hat{\fun{T}}_{\Lambda}$ on a topological space
  $\topo{X}$ we take the following steps:
  \begin{enumerate}[\sl Step 1.]
    \item Construct a frame $\dot{\fun{F}}_{\Lambda}\topo{X}$ of the images of
          predicate liftings applied to the open sets of $\topo{X}$ (viewed
          simply as subsets of $\fun{T}(\fun{U}\topo{X})$);
    \item Quotient $\dot{\fun{F}}_{\Lambda}\topo{X}$ with a suitable relation
          that ensures $\bigveeup_{b \in B} \lambda(b) = \lambda(\bigveeup B)$
          whenever $\lambda$ is monotone;
    \item Employ the functor $\fun{pt} : \cat{Frm} \to \cat{Top}$ to obtain a
          (sober) topological space.
  \end{enumerate}
  This is the content of Definitions~\ref{def-Fdot}, \ref{def-Fddot}
  and~\ref{def-KKP-sob}.
  Recall that $\fun{U} : \cat{Top} \to \cat{Set}$ is the forgetful functor and
  that $\fun{Q}$ is the contravariant functor sending a set to its Boolean
  powerset algebra.

\begin{defi}\label{def-Fdot}
  Let $\fun{T} : \cat{Set} \to \cat{Set}$ be a functor and $\Lambda$ a
  collection of predicate liftings for $\fun{T}$. We define a contravariant
  functor $\dot{\fun{F}}_{\Lambda} : \cat{Top} \to \cat{Frm}$.
  For a topological space $\topo{X}$ let $\dot{\fun{F}}_{\Lambda}\topo{X}$ be
  the subframe of $\fun{Q}(\fun{T}(\fun{U}\topo{X}))$ generated by the set
  \begin{equation*}
    \{ \lambda_{\fun{U}\topo{X}}(a_1, \ldots, a_n) \mid \lambda \in \Lambda
    \text{ $n$-ary, } a_1, \ldots, a_n \in \fun{\Omega}\topo{X} \}.
  \end{equation*}
  That is, we close this set under finite intersections and arbitrary unions in
  $\fun{Q}(\fun{T}(\fun{U}\topo{X}))$.
  For a continuous map $f : \topo{X} \to \topo{X}'$ let
  $\dot{\fun{F}}_{\Lambda}f : \dot{\fun{F}}_{\Lambda}\topo{X}'
  \to \dot{\fun{F}}_{\Lambda}\topo{X}$ be the restriction of
  $\fun{Q}(\fun{T}(\fun{U}f))$ to $\dot{\fun{F}}_{\Lambda}\topo{X}'$.
\end{defi}

\begin{lem}
  The assignment $\dot{\fun{F}}_{\Lambda}$ defines a contravariant functor.
\end{lem}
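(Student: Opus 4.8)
The plan is to check the three defining properties of a contravariant functor: that for each continuous $f : \topo{X} \to \topo{X}'$ the prescribed restriction $\dot{\fun{F}}_{\Lambda}f$ is a well-defined frame homomorphism $\dot{\fun{F}}_{\Lambda}\topo{X}' \to \dot{\fun{F}}_{\Lambda}\topo{X}$, that identities are preserved, and that composition is reversed. Only well-definedness carries any content; the rest is inherited from functoriality of $\fun{U}$, $\fun{T}$ and $\fun{Q}$.

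First I would show that $\fun{Q}(\fun{T}(\fun{U}f)) = (\fun{T}(\fun{U}f))^{-1}$ sends each generator of $\dot{\fun{F}}_{\Lambda}\topo{X}'$ into $\dot{\fun{F}}_{\Lambda}\topo{X}$. Such a generator has the form $\lambda_{\fun{U}\topo{X}'}(a_1', \dots, a_n')$ with $\lambda \in \Lambda$ being $n$-ary and $a_i' \in \fun{\Omega}\topo{X}'$. Applying naturality of the predicate lifting $\lambda : \breve{\fun{P}}^n \to \breve{\fun{P}} \circ \fun{T}$ to the function $\fun{U}f : \fun{U}\topo{X} \to \fun{U}\topo{X}'$ gives
\[
  (\fun{T}(\fun{U}f))^{-1}\big(\lambda_{\fun{U}\topo{X}'}(a_1', \dots, a_n')\big)
    = \lambda_{\fun{U}\topo{X}}\big(f^{-1}(a_1'), \dots, f^{-1}(a_n')\big).
\]
Since $f$ is continuous, each $f^{-1}(a_i')$ lies in $\fun{\Omega}\topo{X}$, so the right-hand side is again a generator of $\dot{\fun{F}}_{\Lambda}\topo{X}$.

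Next, $(\fun{T}(\fun{U}f))^{-1}$ is a frame homomorphism $\fun{Q}(\fun{T}\fun{U}\topo{X}') \to \fun{Q}(\fun{T}\fun{U}\topo{X})$, as inverse images preserve finite intersections and arbitrary unions. Because $\dot{\fun{F}}_{\Lambda}\topo{X}'$ is by definition the closure of its generators under exactly these operations, computed in the ambient powerset frame, the previous paragraph shows that $(\fun{T}(\fun{U}f))^{-1}$ maps all of $\dot{\fun{F}}_{\Lambda}\topo{X}'$ into the closure of the generators of $\dot{\fun{F}}_{\Lambda}\topo{X}$, that is, into $\dot{\fun{F}}_{\Lambda}\topo{X}$. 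Hence $\dot{\fun{F}}_{\Lambda}f$ is well defined, and it is a frame homomorphism because finite meets and arbitrary joins in both subframes are computed as in the ambient powerset frames, where $(\fun{T}(\fun{U}f))^{-1}$ preserves them.

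Finally, functoriality is immediate. The map $\dot{\fun{F}}_{\Lambda}(\id_{\topo{X}})$ is the restriction of $\fun{Q}(\fun{T}(\fun{U}\id_{\topo{X}})) = \id_{\fun{Q}\fun{T}\fun{U}\topo{X}}$, hence equals $\id_{\dot{\fun{F}}_{\Lambda}\topo{X}}$; and for continuous $f : \topo{X} \to \topo{X}'$ and $g : \topo{X}' \to \topo{X}''$ we have $\fun{Q}(\fun{T}(\fun{U}(g \circ f))) = \fun{Q}(\fun{T}(\fun{U}g \circ \fun{U}f)) = \fun{Q}(\fun{T}\fun{U}f) \circ \fun{Q}(\fun{T}\fun{U}g)$ by functoriality of $\fun{U}$ and $\fun{T}$ together with contravariance of $\fun{Q}$, so restricting yields $\dot{\fun{F}}_{\Lambda}(g \circ f) = \dot{\fun{F}}_{\Lambda}f \circ \dot{\fun{F}}_{\Lambda}g$. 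The only point requiring care is the naturality computation above, which is precisely where continuity of $f$ enters.
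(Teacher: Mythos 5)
Your proof is correct and follows essentially the same route as the paper's: well-definedness is reduced to checking generators via naturality of $\lambda$ (which produces $\lambda_{\fun{U}\topo{X}}(f^{-1}(a_1'),\ldots,f^{-1}(a_n'))$, a generator again by continuity of $f$), and functoriality is inherited from $\fun{Q}\circ\fun{T}\circ\fun{U}$. You simply spell out the closure-under-meets-and-joins step and the identity/composition checks in more detail than the paper does.
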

\begin{proof}
  We need to show that $\dot{\fun{F}}_{\Lambda}$ is well defined on morphisms
  and that it is functorial. To show that the action of
  $\dot{\fun{F}}_{\Lambda}$ on morphisms is well-defined, it suffices to show
  that $(\dot{\fun{F}}_{\Lambda}f)(\lambda_{\fun{U}\topo{X}}(a_1', \ldots,
  a_n')) \in \dot{\fun{F}}_{\Lambda}(\topo{X})$ for all generators
  $\lambda_{\fun{U}\topo{X}}(a_1', \ldots, a_n')$ of $\dot{\fun{F}}_{\Lambda}
  \topo{X}'$, because inverse images preserve finite meets and all joins.
  This holds by naturality of $\lambda$:
  \begin{equation*}
    (\dot{\fun{F}}_{\Lambda}f)(\lambda_{\fun{U}\topo{X}'}(a_1, \ldots, a_n))
      = (\fun{T}f)^{-1}(\lambda_{\fun{U}\topo{X}'}(a_1, \ldots, a_n))
      = \lambda_{\fun{U}\topo{X}}(f^{-1}(a_1), \ldots, f^{-1}(a_n)).
  \end{equation*}
  By continuity of $f$ we have $f^{-1}(a_i) \in \fun{\Omega}\topo{X}$ so the
  latter is indeed in $\dot{\fun{F}}_{\Lambda}\topo{X}$.
  Functoriality of $\dot{\fun{F}}_{\Lambda}$ follows from functoriality of
  $\fun{Q} \circ \fun{T} \circ \fun{U}$.
\end{proof}

\begin{defi}\label{def-Fddot}
  Let $\Lambda$ be a collection of predicate liftings for a $\cat{Set}$-functor
  $\fun{T}$.
  For $\topo{X} \in \cat{Top}$, let $\hat{\fun{F}}_{\Lambda}\topo{X}$ be the
  quotient of $\dot{\fun{F}}_{\Lambda}\topo{X}$ with respect to the congruence
  $\sim$ generated by
  \begin{equation*}
    \bigveeup_{b \in B} \lambda(a_1, \ldots, a_{i-1}, b, a_{i+1}, \ldots, a_n)
    \sim \lambda(a_1, \ldots, a_{i-1}, \bigveeup B, a_{i+1}, \ldots, a_n)
  \end{equation*}
  for all $a_i \in \fun{\Omega}\topo{X}$, $B \subseteq \fun{\Omega}\topo{X}$
  directed, and $\lambda \in \Lambda$ monotone in its $i$-th argument. Write
  $q_{\topo{X}} : \dot{\fun{F}}_{\Lambda}\topo{X} \to \hat{\fun{F}}_{\Lambda}
  \topo{X}$ for the quotient map and $[x]$ for the equivalence class in
  $\hat{\fun{F}}_{\Lambda}\topo{X}$ of an element
  $x \in \dot{\fun{F}}_{\Lambda}\topo{X}$.
  For a continuous function $f : \topo{X} \to \topo{X}'$ define
  $\hat{\fun{F}}_{\Lambda}f
    : \hat{\fun{F}}_{\Lambda}\topo{X}' \to \hat{\fun{F}}_{\Lambda}\topo{X}
    : [\lambda_{\fun{U}\topo{X}}(a_1, \ldots, a_n)]
      \mapsto [\dot{\fun{F}}_{\Lambda}(\lambda_{\fun{U}\topo{X}}
      (a_1, \ldots, a_n))]$.
\end{defi}

  Quotienting by the congruence from Definition~\ref{def-Fddot} ensures that the
  lifted versions of monotone predicate liftings are Scott-continuous, see
  Proposition~\ref{prop-lifted-pl} below.
  This is useful when constructing a final model, because Scott-continuity
  ensures that the collection of formulas modulo so-called semantic
  equivalence is set-sized (see Lemma~\ref{lem-norm} below), and consequently
  the final model aids in comparing several equivalence notions in
  Section~\ref{sec-bisim}.

\begin{lem}\label{lem-ddF}
  The assignment $\hat{\fun{F}}_{\Lambda}$ defines a contravariant functor.
\end{lem}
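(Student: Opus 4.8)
The plan is to realise $\hat{\fun{F}}_{\Lambda}f$ as the frame homomorphism induced by the quotient map $q_{\topo{X}'}$ via its universal property; well-definedness and contravariant functoriality then follow formally. Recall from the proof that $\dot{\fun{F}}_{\Lambda}$ is a functor that $\dot{\fun{F}}_{\Lambda}f$ is a frame homomorphism (it preserves finite meets and arbitrary joins) acting on generators by $\dot{\fun{F}}_{\Lambda}f(\lambda_{\fun{U}\topo{X}'}(a_1,\dots,a_n)) = \lambda_{\fun{U}\topo{X}}(f^{-1}(a_1),\dots,f^{-1}(a_n))$, by naturality of $\lambda$. I would then consider the composite frame homomorphism
$$
  q_{\topo{X}} \circ \dot{\fun{F}}_{\Lambda}f
    : \dot{\fun{F}}_{\Lambda}\topo{X}' \too \hat{\fun{F}}_{\Lambda}\topo{X}
$$
and show that it identifies the two sides of every generating relation of the congruence $\sim$ on $\dot{\fun{F}}_{\Lambda}\topo{X}'$. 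Given this, the universal property of $q_{\topo{X}'} : \dot{\fun{F}}_{\Lambda}\topo{X}' \to \hat{\fun{F}}_{\Lambda}\topo{X}'$ provides a unique frame homomorphism $\hat{\fun{F}}_{\Lambda}f : \hat{\fun{F}}_{\Lambda}\topo{X}' \to \hat{\fun{F}}_{\Lambda}\topo{X}$ with $\hat{\fun{F}}_{\Lambda}f \circ q_{\topo{X}'} = q_{\topo{X}} \circ \dot{\fun{F}}_{\Lambda}f$, i.e.~$\hat{\fun{F}}_{\Lambda}f([x]) = [\dot{\fun{F}}_{\Lambda}f(x)]$, which agrees with Definition~\ref{def-Fddot} on generators.

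The crux is the claim about generating relations. Fix $\lambda \in \Lambda$ monotone in its $i$-th argument, opens $a_1,\dots,a_n \in \fun{\Omega}\topo{X}'$ and a directed $B \subseteq \fun{\Omega}\topo{X}'$. Since $\dot{\fun{F}}_{\Lambda}f$ preserves all joins and acts on generators by naturality of $\lambda$,
$$
  \dot{\fun{F}}_{\Lambda}f\Big( \bigveeup_{b \in B} \lambda(a_1,\dots,b,\dots,a_n) \Big)
    = \bigveeup_{b \in B} \lambda\big( f^{-1}(a_1),\dots,f^{-1}(b),\dots,f^{-1}(a_n) \big),
$$
and, since $f^{-1}$ preserves unions,
$$
  \dot{\fun{F}}_{\Lambda}f\big( \lambda(a_1,\dots,\bigveeup B,\dots,a_n) \big)
    = \lambda\big( f^{-1}(a_1),\dots,\bigveeup\nolimits_{b \in B} f^{-1}(b),\dots,f^{-1}(a_n) \big).
$$
By continuity of $f$ the sets $f^{-1}(a_j)$ and $f^{-1}(b)$ are open in $\topo{X}$, and $B' := \{ f^{-1}(b) \mid b \in B \}$ is directed because $f^{-1}$ is monotone; also $\{\lambda(f^{-1}(a_1),\dots,c,\dots,f^{-1}(a_n)) \mid c \in B'\}$ equals the indexed family above. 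Hence the two displayed right-hand sides are precisely the two sides of the generating relation of $\sim$ on $\dot{\fun{F}}_{\Lambda}\topo{X}$ determined by $\lambda$, $i$, the $f^{-1}(a_j)$ and the directed set $B'$, so they are identified by $q_{\topo{X}}$. This proves the claim.

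Finally, contravariant functoriality is immediate: $q_{\topo{X}} \circ \dot{\fun{F}}_{\Lambda}(\id_{\topo{X}}) = q_{\topo{X}}$, and for $g : \topo{X}' \to \topo{X}''$ one has $q_{\topo{X}} \circ \dot{\fun{F}}_{\Lambda}f \circ \dot{\fun{F}}_{\Lambda}g = q_{\topo{X}} \circ \dot{\fun{F}}_{\Lambda}(g \circ f)$ by functoriality of $\dot{\fun{F}}_{\Lambda}$; uniqueness of the induced homomorphisms then forces $\hat{\fun{F}}_{\Lambda}\id_{\topo{X}} = \id_{\hat{\fun{F}}_{\Lambda}\topo{X}}$ and $\hat{\fun{F}}_{\Lambda}(g \circ f) = \hat{\fun{F}}_{\Lambda}f \circ \hat{\fun{F}}_{\Lambda}g$. (Alternatively one checks these equalities on the generators $[\lambda_{\fun{U}\topo{X}}(a_1,\dots,a_n)]$, which generate $\hat{\fun{F}}_{\Lambda}\topo{X}$ as $q_{\topo{X}}$ is a surjective frame homomorphism.) I do not anticipate a real obstacle here: the only point requiring care is confirming that $\dot{\fun{F}}_{\Lambda}f$ genuinely preserves the directed joins appearing in the congruence and that $f^{-1}$ carries directed families of opens to directed families of opens, which is exactly where continuity and monotonicity of $f^{-1}$ enter.
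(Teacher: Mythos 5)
Your proposal is correct and follows essentially the same route as the paper: both reduce well-definedness to checking that $\dot{\fun{F}}_{\Lambda}f = (\fun{T}f)^{-1}$ sends each generating relation of $\sim$ on $\dot{\fun{F}}_{\Lambda}\topo{X}'$ to a generating relation on $\dot{\fun{F}}_{\Lambda}\topo{X}$, the key observation in both being that $f^{-1}$ preserves openness and directedness so that $\{f^{-1}(b) \mid b \in B\}$ is again a directed family of opens. Your explicit appeal to the universal property of the quotient and the more detailed functoriality check are just a more careful write-up of what the paper leaves implicit.
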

\begin{proof}
  We need to prove functoriality of $\hat{\fun{F}}_{\Lambda}$ and that
  $\hat{\fun{F}}_{\Lambda}f$ is well defined for every continuous map
  $f : \topo{X} \to \topo{X}'$.
  In order to show that $\hat{\fun{F}}_{\Lambda}$ is well defined, it suffices
  to show that $\dot{\fun{F}}_{\Lambda}f$ is invariant under the congruence
  $\sim$. If $f : \topo{X} \to \topo{X}'$ is a continuous, then
  \begin{align*}
    \bigveeup_{b' \in B} (\dot{\fun{F}}_{\Lambda}f)
      &(\lambda_{\fun{U}\topo{X}'}(a'_1, \ldots, a'_{i-1}, b', a'_{i+1},
        \ldots, a'_n)) \\
      &= \bigveeup_{b' \in B} (\fun{T}f)^{-1}(\lambda_{\fun{U}\topo{X}'}
        (a'_1, \ldots, a'_{i-1}, b', a'_{i+1}, \ldots, a'_n)) \\
      &= \bigveeup_{b' \in B} \lambda_{\fun{U}\topo{X}}(f^{-1}(a'_1), \ldots,
        f^{-1}(a'_{i-1}), f^{-1}(b'), f^{-1}(a'_{i+1}), \ldots, f^{-1}(a'_n)) \\
      &\sim \lambda_{\fun{U}\topo{X}}(f^{-1}(a'_1), \ldots, f^{-1}(a'_{i-1}),
        f^{-1}(\bigveeup B), f^{-1}(a'_{i+1}), \ldots, f^{-1}(a'_n)) \\
      &= \dot{\fun{F}}_{\Lambda}f(\lambda_{\fun{U}\topo{X}}(a'_1, \ldots,
        a'_{i-1}, \bigveeup B, a'_{i+1}, \ldots, a'_n))
  \end{align*}
  so $\dot{\fun{F}}_{\Lambda}f$ is invariant under the congruence.
  In the $\sim$-step we use the fact that $\{ f^{-1}(b') \mid b' \in B \}$ is
  directed in $\fun{\Omega}\topo{X}$. Functoriality of
  $\hat{\fun{F}}_{\Lambda}f$ follows from functoriality of
  $\dot{\fun{F}}_{\Lambda}$.
\end{proof}

  We are now ready to define the topological Kupke-Kurz-Pattinson lift of a
  functor on $\cat{Set}$ together with a collection of predicate liftings, to a
  functor on $\cat{Top}$.

\begin{defi}\label{def-KKP-sob}
  Define the \emph{topological Kupke-Kurz-Pattinson lift} (KKP lift for short)
  of $\fun{T}$ with respect to $\Lambda$ to be the functor
  \begin{equation*}
    \hat{\fun{T}}_{\Lambda} = \fun{pt} \,\circ \, {\hat{\fun{F}}_{\Lambda}}.
  \end{equation*}
  This is a functor $\cat{Top} \to \cat{Top}$ and since $\fun{pt}$ lands in
  $\cat{Sob}$ it restricts to an endofunctor on $\cat{Sob}$.
\end{defi}

  Let us put our theory to action. As stated in Section~\ref{sec:mon} we can
  generalise the monotone functor $\fun{D}_{\cat{kh}}$ on $\cat{KHaus}$ from
  Definition~\ref{def-monotone-functor} to an endofunctor on $\cat{Top}$.
  We will show that lifting the monotone $\cat{Set}$-functor $\fun{D}$ with respect to
  the predicate liftings for box and diamond from Example~\ref{exm-mon-set-pred}
  gives rise to a functor on $\cat{Top}$ which restricts to $\fun{D}_{\cat{kh}}$.

\begin{exa}[The monotone functor]\label{exm-lift-monotone}
  Recall the $\cat{Set}$-functor $\fun{D}$ from Example~\ref{exm-mon-set}:
  $\fun{D} : X \to \{ W \in \breve{\fun{P}}\breve{\fun{P}}X \mid W
  \text{ is up-closed under inclusion order} \}$. The box and diamond are given
  by the predicate liftings $\lambda^{\Box}, \lambda^{\Diamond} :
  \breve{\fun{P}} \to \breve{\fun{P}} \circ \fun{D}$ defined by
  \begin{equation*}
    \lambda^{\Box}_X(a) := \{ W \in \fun{D}X \mid a \in W \}, \qquad
    \lambda^{\Diamond}_X(a) := \{ W \in \fun{D}X \mid (X \setminus a) \notin W
    \},
  \end{equation*}
  where $X \in \cat{Set}$. Furthermore recall from
  Definition~\ref{def-monotone-functor} that for a compact Hausdorff space $\topo{X}$ the
  space $\fun{D}_{\cat{kh}}\topo{X}$ is the subset of $\fun{D}(\fun{U}\topo{X})$
  of collections of sets $W$ satisfying for all $u \subseteq \fun{U}\topo{X}$
  that $u \in W$ iff there exists a closed $c \subseteq u$ such that every open
  superset of $c$ is in $W$.
  In particular this means $\fun{U}(\fun{D}_{\cat{kh}}\topo{X}) \subseteq
  \fun{D}(\fun{U}\topo{X})$.
  The set $\fun{D}_{\cat{kh}}\topo{X}$ is topologised by the subbase
  \begin{equation*}
    \dbox a := \{ W \in \fun{D}_{\cat{kh}}\topo{X} \mid a \in W \}, \quad
    \ddiamond a := \{ W \in \fun{D}_{\cat{kh}}\topo{X}
      \mid (\topo{X} \setminus a) \notin W \}.
  \end{equation*}
  By Theorem~\ref{thm:D-M-dual-obj} the functor $\fun{M} : \cat{Frm} \to
  \cat{Frm}$ from Definition~\ref{def-functor-M} is such that
  $\fun{M}(\fun{opn} \topo{X}) \cong \fun{opn}(\fun{D}_{\cat{kh}}\topo{X})$
  whenever $\topo{X}$ is a compact Hausdorff space.

  We claim that
  \begin{equation}\label{eq-exm-lift-monotone-5}
    \fun{D}_{\cat{kh}} = (\hat{\fun{D}}_{\{\lambda^{\Box},
    \lambda^{\Diamond} \}})_{\upharpoonright\cat{KHaus}}
  \end{equation}
  and to prove this we will show that $\hat{\fun{F}}_{\{\lambda^{\Box},
  \lambda^{\Diamond} \}}\topo{X} = \fun{opn}(\fun{D}_{\cat{kh}}\topo{X})$ for
  every compact Hausdorff space $\topo{X}$.
  Define a map $\phi : \fun{M}(\fun{opn}\topo{X}) \to
  \hat{\fun{F}}_{\{\lambda^{\Box}, \lambda^{\Diamond} \}}\topo{X}$ on generators
  by $\Box a \mapsto [\lambda^{\Box}(a)]$ and $\Diamond a \mapsto
  [\lambda^{\Diamond}(a)]$. This is well defined because the
  $[\lambda^{\Box}(a)], [\lambda^{\Diamond}(a)]$ satisfy
  relations~\ref{eq-fun-M-1} to~\ref{eq-fun-M-6} from Definition~\ref{def-functor-M}, and
  it is surjective because the image of $\phi$ contains the generators of
  $\hat{\fun{F}}_{\{\lambda^{\Box}, \lambda^{\Diamond} \}}\topo{X}$.

  So we only need to show injectivity of $\phi$. Our strategy to prove this is
  to define a map $\psi : \hat{\fun{F}}_{\{\lambda^{\Box}, \lambda^{\Diamond}
  \}}\topo{X} \to \fun{opn}(\fun{D}_{\cat{kh}}\topo{X})$ and show that it is inverse
  to $\phi$ on the level of sets. Since a set-theoretic inverse suffices we do
  not need to prove that $\psi$ is a homomorphism; we just want it to be well
  defined. Instead of defining $\psi : \hat{\fun{F}}_{\{\lambda^{\Box},
  \lambda^{\Diamond}\}}\topo{X} \to \fun{opn}(\fun{D}_{\cat{kh}}\topo{X})$
  directly, we will give a well-defined map
  $\psi' : \dot{\fun{F}}_{\{\lambda^{\Box},
  \lambda^{\Diamond}\}}\topo{X} \to \fun{opn}(\fun{D}_{\cat{kh}}\topo{X})$ whose
  kernel contains the kernel of the quotient map $q_{\topo{X}} :
  \dot{\fun{F}}_{\{\lambda^{\Box}, \lambda^{\Diamond}\}}\topo{X} \to
  \hat{\fun{F}}_{\{\lambda^{\Box}, \lambda^{\Diamond}\}}\topo{X}$.
  This in turn yields the map $\psi$ we require. In a diagram:
  \begin{equation}\label{ch4-eq-lift-D-diagram}
    \begin{tikzcd}[column sep=1em]
      \dot{\fun{F}}_{\{\lambda^{\Box}, \lambda^{\Diamond} \}}\topo{X}
            \arrow[rr, "\psi'" above]
            \arrow[rd, "q_{\topo{X}}" swap]
        &
        & \fun{opn}(\fun{D}_{\cat{kh}}\topo{X}) \\
        & \hat{\fun{F}}_{\{\lambda^{\Box}, \lambda^{\Diamond} \}}\topo{X}
            \arrow[ru, dashed, "\psi" swap]
        &
    \end{tikzcd}
  \end{equation}

  Define $\psi' : \dot{\fun{F}}_{\{\lambda^{\Box}, \lambda^{\Diamond} \}}\topo{X}
  \to \fun{M}(\fun{opn}\topo{X})$ on generators by $\lambda^{\Box}(a) \mapsto
  \dbox a$ and $\lambda^{\Diamond}(a) \mapsto \ddiamond a$. In order to show that
  this assignments yields a well-defined map (hence extends to a frame
  homomorphism by Remark~\ref{rem-gen-frm-hom}) we need to show that the
  presentation of an element in $\dot{\fun{F}}_{\{\lambda^{\Box},
  \lambda^{\Diamond} \}}\topo{X}$  does not affect its image under $\psi'$.
  That is, if
  \begin{equation}\label{ch4-eq-lift-D}
    \bigcup_{i \in I}\Big(\bigcap_{j \in J_i}\lambda^{\Box}(a_{i,j}) \cap
    \bigcap_{j' \in J_i'} \lambda^{\Diamond}(a_{i,j'})\Big)
      = \bigcup_{k \in K}\Big(\bigcap_{\ell \in L_k}\lambda^{\Box}(a_{k,\ell})
      \cap \bigcap_{\ell' \in L_k'} \lambda^{\Diamond}(a_{k, \ell'})\Big),
  \end{equation}
  where $J_i, J_i', L_k$ and $L_k'$ are finite index sets,
  then
  \begin{equation*}
    \bigcup_{i \in I}\Big(\bigcap_{j \in J}\psi'(\lambda^{\Box}(a_{i,j}))
      \cap \bigcap_{j' \in J'} \psi'(\lambda^{\Diamond}(a_{i,j'}))\Big)
      = \bigcup_{k \in K}\Big(\bigcap_{\ell \in L}
        \psi'(\lambda^{\Box}(a_{k,\ell}))
      \cap \bigcap_{\ell' \in L'} \psi'(\lambda^{\Diamond}(a_{k, \ell'}))\Big).
  \end{equation*}

  As stated we have $\fun{U}(\fun{D}_{\cat{kh}}\topo{X}) \subseteq
  \fun{D}(\fun{U}\topo{X})$. Observe
  \begin{equation*}
    \psi'(\lambda^{\Box}(a))
      = \dbox a
      = \{ W \in \fun{D}(\fun{U}\topo{X}) \mid a \in W \}
        \cap \fun{U}(\fun{D}_{\cat{kh}}\topo{X})
      = \lambda^{\Box}(a) \cap \fun{U}(\fun{D}_{\cat{kh}}\topo{X}),
  \end{equation*}
  and similarly $\psi'(\lambda^{\Diamond}(a)) = \lambda^{\Diamond}(a) \cap
  \fun{U}(\fun{D}_{\cat{kh}}\topo{X})$.
  Suppose the identity in~\eqref{ch4-eq-lift-D} holds, then we have
  \begin{align*}
    \bigcup_{i \in I}\Big(&\bigcap_{j \in J}\psi'(\lambda^{\Box}(a_{i,j})) \cap
    \bigcap_{j' \in J'} \psi'(\lambda^{\Diamond}(a_{i,j'}))\Big) \\
      &= \bigcup_{i \in I}\Big(\bigcap_{j \in J}(\lambda^{\Box}(a_{i,j}) \cap
         \fun{U}(\fun{D}_{\cat{kh}}\topo{X})) \cap \bigcap_{j' \in J'}
         (\lambda^{\Diamond}(a_{i,j'}) \cap \fun{U}(\fun{D}_{\cat{kh}}\topo{X}))
         \Big) \\
      &= \bigcup_{i \in I}\Big(\fun{U}(\fun{D}_{\cat{kh}}\topo{X}) \cap
         \bigcap_{j \in J}\lambda^{\Box}(a_{i,j}) \cap \bigcap_{j' \in J'}
         \lambda^{\Diamond}(a_{i,j'}) \Big) \\
      &= \fun{U}(\fun{D}_{\cat{kh}}\topo{X}) \cap \bigcup_{i \in I}
         \Big(\bigcap_{j \in J}\lambda^{\Box}(a_{i,j}) \cap
         \bigcap_{j' \in J'} \lambda^{\Diamond}(a_{i,j'}) \Big) \\
      &= \fun{U}(\fun{D}_{\cat{kh}}\topo{X}) \cap \bigcup_{k \in K}
         \Big(\bigcap_{\ell \in L}\lambda^{\Box}(a_{k,\ell}) \cap
         \bigcap_{\ell' \in L'} \lambda^{\Diamond}(a_{k, \ell'})\Big) \\
      &= \bigcup_{k \in K}\Big(\bigcap_{\ell \in L}
         \psi'(\lambda^{\Box}(a_{k,\ell})) \cap \bigcap_{\ell' \in L'}
         \psi'(\lambda^{\Diamond}(a_{k, \ell'}))\Big).
  \end{align*}
  So $\psi'$ is well defined.

  It is easy to see that $\bigveeup_{b \in B} \lambda(b) \sim
  \lambda(\bigveeup B)$ implies $\big(\bigveeup_{b \in B} \lambda(b),
  \lambda(\bigveeup B)\big) \in \ker(\psi)$ for $\lambda \in \{ \lambda^{\Box},
  \lambda^{\Diamond} \}$.  Since these pairs generate the congruence of
  Definition~\ref{def-Fddot}, we have ${\sim} = \ker(q_{\topo{X}}) \subseteq
  \ker(\psi')$ and hence there exists a map
  $\psi : \hat{\fun{F}}_{\{\lambda^{\Box}, \lambda^{\Diamond} \}}\topo{X} \to
  \fun{opn}(\hat{\fun{T}}\topo{X})$ such that the diagram
  in~\eqref{ch4-eq-lift-D-diagram} commutes. Therefore $\psi$ is (well) defined on
  generators by $[\lambda^{\Box}(a)] \mapsto \Box a$ and $[\lambda^{\Diamond}(a)]
  \mapsto \Diamond a$. One can easily check that $\psi \circ \phi = \id$ and
  $\phi \circ \psi = \id$ by looking at the action on the generators. It follows
  that $\phi$ is injective.

  This entails that for compact Hausdorff spaces $\topo{X}$,
  \begin{equation*}
    \hat{\fun{D}}_{\{\lambda^{\Box}, \lambda^{\Diamond}\}}\topo{X}
      = \fun{D}_{\cat{kh}}\topo{X},
  \end{equation*}
  Furthermore, it can be seen that for continuous maps $f : \topo{X} \to
  \topo{X}'$ we have $\fun{F}_{\{\lambda^{\Box}, \lambda^{\Diamond} \}}f
  = \fun{opn}(\fun{D}_{\cat{kh}}f)$. As a consequence, when restricted to
  $\cat{KHaus}$ we have~\eqref{eq-exm-lift-monotone-5} indeed. That is, lifting
  the monotone functor on $\cat{Set}$ with respect to the box/diamond lifting
  yields a generalisation of the monotone functor on $\cat{KHaus}$ from
  Definition~\ref{def-monotone-functor}.
\end{exa}

\begin{exa}\label{exm-lift-viet}
  Using similar techniques as in the previous example, one can show that, when
  restricted to $\cat{KHaus}$, the topological Kupke-Kurz-Pattinson lift of
  $\fun{P}$ with respect to the usual box and diamond lifting coincides with the
  Vietoris functor.
  (An algebraic description similar to the one in Theorem~\ref{thm:D-M-dual-obj}
  is given in~\cite[Proposition~III4.6]{Joh82}.)
\end{exa}

\begin{exa}
  Not every endofunctor on $\cat{Top}$ can be obtained as the lift of a
  $\cat{Set}$-functor with respect to a (cleverly) chosen set of predicate
  liftings in the sense of Definition~\ref{def-KKP-sob}. A trivial counterexample
  is the functor $\fun{F} : \cat{Top} \to \cat{Top}$ from
  Example~\ref{exm-functor-F}. For every topological space $\topo{X}$ we have
  $\fun{F}\topo{X} = \two$, which is not a $T_0$ space, hence not a sober
  space. Therefore $\fun{F}$ does not preserve sobriety, while every lifted
  functor automatically preserves sobriety. Thus $\fun{F}$ is not the lift of any
  $\cat{Set}$-functor.
\end{exa}

\subsection{Lifting predicate liftings}
  We describe how to lift a predicate lifting to an open predicate lifting.
  Recall that $\fun{Z} : \cat{Frm} \to \cat{Set}$ is the forgetful functor which
  sends a frame to its underlying set.

\begin{defi}\label{def-lift-pred-lift}
  Let $\Lambda$ be a collection of predicate liftings for a functor
  $\fun{T} : \cat{Set} \to \cat{Set}$.
  A predicate lifting
  $\lambda : \breve{\fun{P}}^n \to \breve{\fun{P}} \circ \fun{T}$ in $\Lambda$
  induces an open predicate lifting
  $\widehat{\lambda} : \fun{\Omega}^n \to \fun{\Omega} \circ \hat{\fun{T}}_{\Lambda}$ for
  $\hat{\fun{T}}_{\Lambda}$ via
  \begin{equation*}
    \begin{tikzcd}[column sep=4.5em]
      \fun{\Omega}^n\topo{X}
            \arrow[r, "\lambda_{\fun{U}\topo{X}}" above]
        & \fun{Z}(\dot{\fun{F}}_{\Lambda}\topo{X})
            \arrow[r, "\fun{Z}q_{\topo{X}}" above]
        & \fun{Z}(\hat{\fun{F}}_{\Lambda}\topo{X})
            \arrow[r, "\fun{Z}k_{\hat{\fun{F}}_{\Lambda}\topo{X}}" above]
        & \fun{\Omega}(\fun{pt}(\hat{\fun{F}}_{\Lambda}\topo{X}))
          = \fun{\Omega} (\hat{\fun{T}}_{\Lambda}\topo{X}).
    \end{tikzcd}
  \end{equation*}
  By $\lambda_{\fun{U}\topo{X}}$ we actually mean the restriction of
  $\lambda_{\fun{U}\topo{X}}$ to $\fun{\Omega}^n\topo{X} \subseteq
  \breve{\fun{P}}(\fun{U}\topo{X})$. The map $k_{\fun{F}\topo{X}}$ is the frame
  homomorphism given by $a \mapsto \{ p \in \fun{pt}(\fun{F}_{\Lambda}\topo{X})
  \mid p(a) = 1 \}$. Then $\widehat{\Lambda} := \{ \widehat{\lambda} \mid \lambda
  \in  \Lambda \}$ is a geometric modal signature for $\hat{\fun{T}}_{\Lambda}$.
\end{defi}

\begin{lem}
  The assignment $\widehat{\lambda} : \fun{\Omega}^n \to \fun{\Omega}
  \circ \hat{\fun{T}}_{\Lambda}$ is a natural transformation.
\end{lem}
\begin{proof}
  For a continuous function $f : \topo{X} \to \topo{X}'$ the following diagram
  commutes in $\cat{Set}$:
  \begin{equation*}
    \begin{tikzcd}[column sep=4.5em]
      \fun{\Omega}^n\topo{X}' \arrow[d, "(f^{-1})^n" left]
            \arrow[r, "\lambda_{\fun{U}\topo{X}'}" above]
        &\fun{Z}(\dot{\fun{F}}_{\Lambda}\topo{X}')
            \arrow[r, "\fun{Z}q_{\topo{X}'}" above]
            \arrow[d, "(\fun{T}f)^{-1}"]
        & \fun{Z}(\hat{\fun{F}}_{\Lambda}\topo{X}')
            \arrow[d, "(\fun{T}f)^{-1}"]
            \arrow[r, "\fun{Z}k_{\hat{\fun{F}}_{\Lambda}\topo{X}'}" above]
        & \fun{\Omega}(\fun{pt}(\hat{\fun{F}}_{\Lambda}\topo{X}'))
            \arrow[d, "\fun{\Omega}(\fun{pt}((\fun{T}f)^{-1}))" right] \\
      \fun{\Omega}^n\topo{X}
            \arrow[r, "\lambda_{\fun{U}\topo{X}}" below]
        & \fun{Z}(\dot{\fun{F}}_{\Lambda}\topo{X})
            \arrow[r, "\fun{Z}q_{\topo{X}}" below]
        & \fun{Z}(\hat{\fun{F}}_{\Lambda}\topo{X})
            \arrow[r, "\fun{Z}k_{\hat{\fun{F}}_{\Lambda}\topo{X}}" below]
        & \fun{\Omega}(\fun{pt}(\hat{\fun{F}}_{\Lambda}\topo{X}))
    \end{tikzcd}
  \end{equation*}
  Commutativity of the left square follows from naturality of $\lambda$,
  commutativity of the middle square follows from the proof of Lemma~\ref{lem-ddF}
  and commutativity of the right square can be seen as follows:
  let $a_1', \ldots, a_n' \in \fun{\Omega}\topo{X}'$, then
  \begin{align*}
    \fun{\Omega}(\fun{pt}((\fun{T}f)^{-1})) \,\circ\,
      &\fun{Z}k_{\fun{F}_{\Lambda}\topo{X}'}(\lambda_{\fun{U}\topo{X}'}(a_1',
         \ldots, a_n')) \\
      &= \{ q \in \fun{pt}(\fun{F}_{\Lambda}\topo{X}) \mid q \circ
         (\fun{T}f)^{-1}(\lambda_{\fun{U}\topo{X}'}(a_1', \ldots, a_n')) = 1 \}
         \\
      &= \fun{Z}k_{\fun{F}_{\Lambda}\topo{X}}((\fun{T}f)^{-1}
         (\lambda_{\fun{U}\topo{X}'}(a_1', \ldots, a_n'))).
  \end{align*}
  So $\widehat{\lambda}$ is an open predicate lifting.
\end{proof}

  The nature of the definitions of $\hat{\fun{T}}_{\Lambda}$ and
  $\widehat{\Lambda}$ yields the following desirable results.

\begin{prop}\label{prop-lifted-pl} \hfill
  \begin{enumerate}
    \item\label{prop-lifted-pl-1}
          Let $\fun{T} : \cat{Set} \to \cat{Set}$ be a functor and $\Lambda$ a
          collection of predicate liftings for $\fun{T}$. Then
          $\widehat{\Lambda}$ is characteristic for $\hat{\fun{T}}_{\Lambda}$.
    \item\label{prop-lifted-pl-2}
          If $\lambda \in \Lambda$ is monotone, then $\hat{\lambda} \in
          \hat{\Lambda}$ is Scott-continuous.
  \end{enumerate}
\end{prop}
\begin{proof}
  Let $\topo{X}$ be a topological space. For the first item, we need to show that
  the collection
  \begin{equation}\label{eq-lpl-char}
    \{ \widehat{\lambda}(a_1, \ldots, a_n) \mid \lambda \in \Lambda
    \text{ $n$-ary}, a_i \in \fun{\Omega}\topo{X} \}
  \end{equation}
  forms a subbase for the topology on $\hat{\fun{T}}_{\Lambda}\topo{X}$. An
  arbitrary nonempty open set of $\hat{\fun{T}}_{\Lambda}\topo{X}$ is of the form
  $\tilde{x} = \{ p \in \fun{pt}(\hat{\fun{F}}_{\Lambda}\topo{X}) \mid p(x) = 1
  \}$, for $x \in \hat{\fun{F}}_{\Lambda}\topo{X}$. An arbitrary element of
  $\hat{\fun{F}}_{\Lambda}\topo{X}$ is the equivalence class of an arbitrary
  union of finite intersections of elements of the form
  $\lambda_{\fun{U}\topo{X}}(a_1, \ldots, a_n)$, for $\lambda \in \Lambda$ and
  $a_1, \ldots, a_n \in \fun{\Omega}\topo{X}$. So we may write
  $x = \bigcup_{i \in I}(\bigcap_{j \in J_i} [\lambda^{i,j}_{\fun{U}\topo{X}}
  (a^{i,j}_1, \ldots, a^{i,j}_{n_{i,j}})])$
  for some index set $I$, finite index sets $J_i$, $\lambda^{i,j} \in \Lambda$
  and open sets $a^{i,j}_k \in \fun{\Omega}\topo{X}$. We get
  \begin{equation*}
    \tilde{x} = \bigcup_{i \in I}\Big(\bigcap_{j \in J_i}
    \reallywidetilde{[\lambda^{i,j}_{\fun{U}\topo{X}}
    (a^{i,j}_1, \ldots, a^{i,j}_{n_{i,j}})]}\Big)
      = \bigcup_{i \in I}\Big(\bigcap_{j \in J_i}
      \widehat{\lambda}^{i,j}_{\topo{X}}(a^{i,j}_1, \ldots, a^{i,j}_{n_{i,j}})
      \Big).
  \end{equation*}
  The second equality follows from Definition~\ref{def-lift-pred-lift}.
  This shows that the open sets in~\eqref{eq-lpl-char} indeed form a subbase for
  the open sets of $\hat{\fun{T}}_{\Lambda}\topo{X}$.

  The second item follows immediately from the definitions.
\end{proof}

\begin{exa}\label{exa:lift-pred-lift-mon}
  Let $\lambda^{\Box}$ be the box-predicate lifting for the monotone functor
  $\fun{D}$ on $\cat{Set}$ from Example~\ref{exm-mon-set-pred}.
  Then the procedure from Definition~\ref{def-lift-pred-lift} sends an open
  $a$ in a compact Hausdorff space
  $\topo{X}$ to $[\lambda^{\Box}_{\fun{U}\topo{X}}(a)]$ in
  $\hat{\fun{F}}_{\Lambda}\topo{X}$. We know from Example~\ref{exm-lift-monotone}
  that $\hat{\fun{F}}_{\Lambda}\topo{X}$ is isomorphic to $\fun{M}(\fun{opn}\topo{X})$
  (provided $\topo{X}$ is compact Hausdorff) and the element
  $[\lambda_{\fun{U}\topo{X}}^{\Box}(a)]$ corresponds to $\Box a \in
  \fun{M}(\fun{opn}\topo{X})$. Therefore
  \[
    \hat{\lambda}_{\topo{X}}^{\Box}(a)
      = \tilde{\Box a}
      \in \fun{\Omega} \circ \fun{pt} \circ \fun{M} \circ \fun{opn} \topo{X}
      = \fun{\Omega} \circ \fun{pt} \circ \hat{\fun{F}}_{\Lambda}\topo{X}
  \]
  We know from Theorem~\ref{thm:D-M-dual-obj} that
  $\fun{pt} \circ \fun{M} \circ \fun{opn} \cong \fun{D}_{\cat{kh}}$
  and under this isomorphism $\tilde{\Box a} \in \fun{\Omega} \circ \fun{pt}
  \circ \fun{M} \circ \fun{opn} \topo{X} $ is sent to
  $\dbox a \in \fun{\Omega}(\fun{D}_{\cat{kh}}\topo{X})$. Therefore
  \[
    \hat{\lambda}^{\Box}_{\topo{X}}(a) = \dbox a,
  \]
  which yields the open predicate liftings defined in Subsection~\ref{subsec:mon-lan}.
  Similarly, the predicate lifting $\lambda^{\Diamond}$ from
  Example~\ref{exm-mon-set-pred} lifts the similarly-named open predicate lifting
  for $\fun{D}_{\cat{kh}}$ from Subsection~\ref{subsec:mon-lan}.
\end{exa}

\begin{exa}
  Similar to Example~\ref{exa:lift-pred-lift-mon}, the predicate liftings
  from Example~\ref{exm-krip-pred} for the powerset functor lift to the open
  predicate liftings from Example~\ref{exm-viet} for the Vietoris functor on
  $\cat{KHaus}$.
\end{exa}

  Let $\fun{T}$ be an endofunctor on $\cat{Set}$ and $\Lambda$ a set of
  monotone predicate liftings for $\fun{T}$. Moreover, suppose that we have a
  collection $\Ax$ of one-step axioms and rules for the (classical) modal
  language $\ML(\Lambda)$.
  Then $\Ax$ yields axioms and rules for the language $\GML(\hat{\Lambda})$
  (by simply replacing every occurrence of $\lambda$ with $\hat{\lambda}$).
  We write $\Ax'$ for the collection of these axioms, together with the
  Scott-continuity axioms for all open predicate liftings in $\hat{\Lambda}$.
  Then by construction $\Ax'$ is sound for $\hat{\fun{T}}_{\Lambda}$-coalgebras,
  and the pair $(\hat{\Lambda}, \Ax')$ gives rise to an endofunctor
  $\fun{L}'$ on $\cat{Frm}$ via the procedure from Definition~\ref{def:ax-fun}.

  It is now natural to ask whether we can apply Theorem~\ref{thm:cor-com}
  to this setting to obtain a completeness result.
  However, while soundness of $\Ax'$ implies the existence of a
  natural transformation
  $\fun{L}' \circ \fun{opn} \to \fun{opn} \circ \hat{\fun{T}}_{\Lambda}$,
  there seems to be no guarantee that this yields a natural isomorphism.
  A potential duality of restrictions of the functors to a suitable
  subcategory is furthermore frustrated by the fact that we have no generic
  preservation results, so that $\hat{\fun{T}}_{\Lambda}$ need not even
  restrict to $\cat{KSob}$ or $\cat{KHaus}$.
  We highlight this question as an interesting direction for further research.

\section{A final model construction}\label{sec-final}\label{sec:final}

  We construct a final model in $\cat{Mod}(\fun{T})$ for a functor $\fun{T}$
  where either $\fun{T}$ is an endofunctor on $\cat{Sob}$, or $\fun{T}$ is an
  endofunctor on $\cat{Top}$ which preserves sobriety. This assumption need not
  be problematic: If a functor on $\cat{Top}$ does not preserve sobriety we can
  look at its sobrification.

  There are various ways of obtaining final
  coalgebras~\cite{KupKurVen04,AdaMilVel05,MosVig04,MosVig06}.
  Our strategy for obtaining a final coalgebra is similar to that by
  Moss and Viglizzo~\cite{MosVig04,MosVig06} in the sense that we use a
  language (in our case induced by the geometric similarity type) without a
  logical system.
  Specifically,
  given an endofunctor $\fun{T}$ on $\cat{Top}$, we use a notion of semantic
  equivalence to obtain an equivalence relation on $\GML(\Phi, \Lambda)$.
  We prove that $\GML(\Phi, \Lambda)$ modulo this equivalence relation forms
  a frame, denoted by $\mathbf{E}$.
  Subsequently, we set
  $\fun{L} = \fun{opn} \circ \fun{T} \circ \fun{pt} : \cat{Frm} \to \cat{Frm}$
  and endow $\mathbf{E}$ with an $\fun{L}$-algebra structure $\delta$.
  The topological space $\fun{pt}\mathbf{E}$ is the state space for the
  final model, and the map $\delta$ gives rise to a $\fun{T}$-algebra
  structure on $\fun{pt}\mathbf{E}$. Finally, we show that with the
  canonical valuation this gives rise to a final model in $\cat{Mod}(\fun{T})$.

  We hasten to say that the functor $\fun{L}$ used in this construction
  need not coincide with the one discussed at the end of Section~\ref{sec:lift}.
  From a more abstract point of view~\cite{BonKur05,BonKur06,Kli07}, the functor $\fun{L}$
  defined by $\fun{L} = \fun{opn} \circ \fun{T} \circ \fun{pt}$ gives rise
  to a logic in its own right. We discuss the implications of the final
  model construction on $\fun{L}$ in more detail in the conclusion.

  It is well known that coalgebras for endofunctors on $\cat{Set}$ do not
  in general have final coalgebras. In particular, there exists no final
  $\fun{P}$-coalgebra,
  as this would imply the existence of a set $X$ such that $X$ and $\fun{P}X$
  have the same cardinality.
  One way to remedy this is by using topology to restrict the collection of
  admissible morphisms.
  We show that in presence of a Scott-continuous
  characteristic geometric modal signature we \emph{can} construct a
  final coalgebra. Intuitively, Scott-continuity gives us a finitary handle
  on the final coalgebra we construct.
  (For comparison, observe that the set of predicate liftings
  $\Lambda = \{ \lambda^{\Box}, \lambda^{\Diamond} \}$ for the powerset functor
  $\fun{P} : \cat{Set} \to \cat{Set}$ is not Scott-continuous.)

\begin{asm}\label{asm:final}
  Throughout this section, fix an endofunctor $\fun{T}$ on $\cat{Top}$ which
  preserves sobriety, and a Scott-continuous characteristic geometric modal
  signature $\Lambda$ for $\fun{T}$. Recall that $\Phi$ is a set of proposition
  letters.
\end{asm}

  Suppose that the functor $\fun{T} : \fun{Top} \to \fun{Top}$ with
  geometric modal signature $\Lambda$ arises from an endofunctor
  $\fun{T}'$ on $\cat{Set}$ with predicate liftings $\Lambda'$ via the
  procedure in Section~\ref{sec:lift}. Then $\fun{T}$ preserves sobriety by
  definition and $\Lambda$ is characteristic as a consequence of
  Proposition~\ref{prop-lifted-pl}\eqref{prop-lifted-pl-1}.
  If moreover all the predicate liftings in $\Lambda'$ are monotone,
  then Proposition~\ref{prop-lifted-pl}\eqref{prop-lifted-pl-2} tells us that
  the open predicate liftings in $\Lambda$ are Scott-continuous.

\begin{defi}\label{def-final-equiv}
  Call two formulas $\phi$ and $\psi$ \emph{equivalent} in $\cat{Mod}(\fun{T})$
  with respect to $\Lambda$, notation: $\phi \equiv_{\fun{T}, \Lambda} \psi$, if
  $\mf{X}, x \Vdash \phi$ iff $\mf{X}, x \Vdash \psi$ for all
  $\mf{X} \in \cat{Mod}(\fun{T})$ and $x \in \mf{X}$. Denote the equivalence
  class of $\phi$ in $\GML(\Phi, \Lambda)$ by $[\phi]$. Let $\mathbf{E} =
  \mathbf{E}(\fun{T}, \Phi, \Lambda)$ be the collection of formulas modulo
  $\equiv_{\fun{T}, \Lambda}$.
\end{defi}

  Recall that a coherent formula is one which does not involve arbitrary
  disjunctions.

\begin{lem}[Normal form]\label{lem-norm}
  Under the assumption, every formula is equivalent to a formula of the form
  $\bigvee_{i \in I} \phi_i$, where all the $\phi_i$ are coherent.
\end{lem}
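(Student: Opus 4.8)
The plan is to prove the statement by induction on the structure of $\phi \in \GML(\Phi,\Lambda)$, showing that every formula is $\equiv_{\fun{T},\Lambda}$-equivalent to a disjunction $\bigvee_{i\in I}\phi_i$ of coherent formulas. I note in advance that, of the three hypotheses collected in Assumption~\ref{asm:final}, only the \emph{Scott-continuity} of $\Lambda$ will be used; preservation of sobriety and the characteristic property play no r\^{o}le here.

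The base cases $\phi=\top$ and $\phi=p$ are immediate, as $\top$ and $p$ are themselves coherent. For $\phi=\psi_1\wedge\psi_2$, the induction hypothesis supplies $\psi_1\equiv_{\fun{T},\Lambda}\bigvee_{i\in I}\chi_i$ and $\psi_2\equiv_{\fun{T},\Lambda}\bigvee_{j\in J}\chi'_j$ with all $\chi_i,\chi'_j$ coherent; since $\cap$ distributes over arbitrary unions, $\psi_1\wedge\psi_2\equiv_{\fun{T},\Lambda}\bigvee_{(i,j)\in I\times J}(\chi_i\wedge\chi'_j)$, and each $\chi_i\wedge\chi'_j$ is coherent (coherent formulas being closed under finite conjunction). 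For $\phi=\bigvee_{k\in K}\psi_k$, one applies the induction hypothesis to each $\psi_k$ and flattens the resulting iterated disjunction.

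The heart of the argument is the modal case $\phi=\heartsuit^{\lambda}(\psi_1,\ldots,\psi_n)$ with $\lambda\in\Lambda$ $n$-ary. The key observation is that the arbitrary disjunctions handed over by the induction hypothesis can be rewritten as \emph{directed} ones before Scott-continuity becomes applicable. Write $\psi_m\equiv_{\fun{T},\Lambda}\bigvee_{i\in I_m}\chi_{m,i}$ with all $\chi_{m,i}$ coherent, and set
\begin{equation*}
  D_m := \{ \, {\textstyle\bigvee_{i\in F}}\chi_{m,i} \mid F \text{ a finite subset of } I_m \, \}.
\end{equation*}
Every element of $D_m$ is coherent, and for each $\mf{X}=(\topo{X},\gamma,V)\in\cat{Mod}(\fun{T})$ the family $\{ \, \llb\theta\rrb^{\mf{X}}\mid\theta\in D_m \, \}$ is a directed subset of $\fun{\Omega}\topo{X}$ (the join of $\bigvee_{i\in F_1}\chi_{m,i}$ and $\bigvee_{i\in F_2}\chi_{m,i}$ being $\bigvee_{i\in F_1\cup F_2}\chi_{m,i}$) with union $\llb\psi_m\rrb^{\mf{X}}$. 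Applying the Scott-continuity of $\lambda$ in turn in its first, second, \ldots, $n$-th argument then gives, for every such $\mf{X}$,
\begin{equation*}
  \lambda_{\topo{X}}(\llb\psi_1\rrb^{\mf{X}},\ldots,\llb\psi_n\rrb^{\mf{X}})
    = \bigcup \{ \, \lambda_{\topo{X}}(\llb\theta_1\rrb^{\mf{X}},\ldots,\llb\theta_n\rrb^{\mf{X}}) \mid \theta_1\in D_1,\ldots,\theta_n\in D_n \, \}.
\end{equation*}
Taking preimages under $\gamma$ (which preserves arbitrary unions) and unravelling Definition~\ref{def-gml-sem} yields
\begin{equation*}
  \llb\heartsuit^{\lambda}(\psi_1,\ldots,\psi_n)\rrb^{\mf{X}}
    = \bigcup \{ \, \llb\heartsuit^{\lambda}(\theta_1,\ldots,\theta_n)\rrb^{\mf{X}} \mid \theta_m\in D_m \, \}.
\end{equation*}
Since $\mf{X}$ was arbitrary, $\heartsuit^{\lambda}(\psi_1,\ldots,\psi_n)\equiv_{\fun{T},\Lambda}\bigvee\{ \, \heartsuit^{\lambda}(\theta_1,\ldots,\theta_n)\mid\theta_m\in D_m \, \}$, which is a disjunction of coherent formulas, since $\heartsuit^{\lambda}$ applied to coherent arguments is coherent. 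This closes the induction.

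I expect the modal case to be the only genuinely delicate point: one has to perform the directedness reformulation of the disjunctions first, and then verify that iterating the one-variable Scott-continuity condition produces exactly the union over the product $D_1\times\cdots\times D_n$. The remaining cases are routine frame arithmetic, together with the easy observation that coherent formulas are closed under finite conjunction, finite disjunction, and application of the modalities $\heartsuit^{\lambda}$.
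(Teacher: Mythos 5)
Your proof is correct and follows essentially the same route as the paper's: induction on formula complexity, with the modal case handled by rewriting the arbitrary disjunctions supplied by the induction hypothesis as directed joins of finite subdisjunctions and then invoking Scott-continuity. The only difference is presentational: the paper treats the unary modality explicitly and declares the $n$-ary case ``similar,'' whereas you carry out the $n$-ary case in full by iterating the one-argument Scott-continuity condition, which is a welcome bit of extra care.
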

\begin{proof}
  The proof proceeds by induction on the complexity of the formula.
  Suppose $\phi = \phi_1 \vee \phi_2$. By induction we may assume that
  $\phi_1 \equiv_{\fun{T}, \Lambda} \bigvee_{i \in I}\psi_i$ and
  $\phi_2 \equiv_{\fun{T}, \Lambda} \bigvee_{j \in J}\psi_j$, where all the
  $\psi_i$ and $\psi_j$ are coherent, and we have $\phi \equiv_{\fun{T}, \Lambda}
  \bigvee_{i \in I \cup J} \psi_i$, as desired. If $\phi = \phi_1 \wedge \phi_2$,
  then $\phi \equiv_{\fun{T}, \Lambda} (\bigvee_{i \in I}\psi_i) \wedge
  (\bigvee_{j \in J}\psi_j) \equiv_{\fun{T}, \Lambda}
  \bigvee_{(i,j) \in I \times J} \psi_i \wedge \psi_j$. Lastly, suppose
  $\phi = \heartsuit^{\lambda}(\bigvee_{i \in I}\psi_i)$, where all the $\psi_i$
  are coherent. Then we have
  $\bigvee_{i \in I}\psi_i = \bigvee \{ \bigvee_{i \in I'} \psi_i \mid I'
  \subseteq I \text{ finite} \}$ and by construction the set
  $\{ \llb \bigvee_{i \in I'} \phi_i \rrb^{\mf{X}} \mid I' \subseteq I, I'
  \text{ finite} \}$ is directed for every $\fun{T}$-model $\mf{X} = (\topo{X},
  \gamma, V)$. Hence by Scott-continuity of $\lambda$ we obtain
  \begin{equation*}
    \lambda_{\topo{X}}(\llb \bigvee_{i \in I}\psi_i \rrb^{\mf{X}})
      = \lambda_{\topo{X}}(\bigcup \{ \llb \bigvee_{i \in I'}\psi_i \rrb^{\mf{X}}
        \mid I' \subseteq I \text{ finite} \})
      = \bigcup \{ \lambda_{\topo{X}}(\llb \bigvee_{i \in I'}\psi_i \rrb^{\mf{X}})
        \mid I' \subseteq I \text{ finite} \}.
  \end{equation*}
  Therefore $\phi \equiv_{\fun{T}, \Lambda} \bigvee \{ \heartsuit^{\lambda}
  (\bigvee_{i \in I}\psi_i) \mid I' \subseteq I \text{ finite} \}$, i.e.~$\phi$
  is equivalent to an arbitrary disjunction of coherent formulas. The case for
  $n$-ary modalities is similar.
\end{proof}

\begin{cor}
  The collection $\mathbf{E}$ from Definition~\ref{def-final-equiv} is a set.
\end{cor}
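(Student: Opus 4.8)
The plan is to realise $\mathbf{E}$ as a surjective image of a genuine set, using the normal-form lemma.

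First I would record that the collection $\mathsf{Coh} \subseteq \GML(\Phi, \Lambda)$ of coherent formulas is a set. A coherent formula is a finite syntactic object built from $\top$ and the proposition letters in the set $\Phi$ using binary $\wedge$, \emph{finite} disjunctions, and the modalities $\heartsuit^{\lambda}$ for $\lambda$ in the set $\Lambda$ (which is a set, cf.\ Proposition \ref{prop-pred-lift-sier}, since $\Sier \in \cat{Top}$). Concretely, write $\mathsf{Coh} = \bigcup_{n \in \omega}\mathsf{Coh}_n$, where $\mathsf{Coh}_n$ is the collection of coherent formulas of nesting depth at most $n$; one checks by induction on $n$ that each $\mathsf{Coh}_n$ is a set, using that finite sequences over a set form a set, that finite subsets of a set form a set, and that $\Phi$ and $\Lambda$ are sets. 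Hence $\mathsf{Coh}$, being a countable union of sets, is a set.

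Next, consider the assignment $S \mapsto [\bigvee S]$ from the power set $\mathcal{P}(\mathsf{Coh})$ to $\mathbf{E}$, where for $S \subseteq \mathsf{Coh}$ we write $\bigvee S := \bigvee_{\psi \in S}\psi$, a well-formed formula of $\GML(\Phi, \Lambda)$ with index set $S$ (and $\bigvee \emptyset = \bot$). By Lemma \ref{lem-norm}, every formula $\phi$ satisfies $\phi \equiv_{\fun{T}, \Lambda} \bigvee_{i \in I}\phi_i$ for some family of coherent formulas $\phi_i$; putting $S = \{\phi_i \mid i \in I\} \subseteq \mathsf{Coh}$ gives $[\phi] = [\bigvee S]$, so the assignment is surjective. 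Since $\mathcal{P}(\mathsf{Coh})$ is a set and the image of a set under a function is a set, $\mathbf{E}$ is a set.

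The only point needing any care is the verification that $\mathsf{Coh}$ is a set rather than a proper class, but this is routine once one notes that coherent formulas are finite objects over the fixed sets $\Phi$ and $\Lambda$; all the genuine mathematical content sits in Lemma \ref{lem-norm}, where Scott-continuity of $\Lambda$ is what stops modal nesting from producing class-many pairwise inequivalent formulas.
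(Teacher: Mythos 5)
Your proof is correct and follows essentially the same route as the paper, which simply observes that the corollary is immediate from Lemma \ref{lem-norm} together with the fact that the coherent formulas form a set; you have merely made explicit the surjection from $\mathcal{P}(\mathsf{Coh})$ onto $\mathbf{E}$ that the paper leaves implicit. The side remark about $\Lambda$ being a set via Proposition \ref{prop-pred-lift-sier} is unnecessary (the signature $\Lambda$ is a set by assumption) but harmless.
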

\begin{proof}
  This follows immediately from Lemma~\ref{lem-norm} and the fact that the
  collection of coherent formulas is a set.
\end{proof}

\begin{defi}
  Define top, bottom, conjunction and arbitrary disjunction on $\mathbf{E}$ by
  $\top_{\mathbf{E}} = [\top]$, $\bot_{\mathbf{E}} = [\bot]$,
  $[\phi] \wedge [\psi] := [\phi \wedge \psi]$ and
  $\bigvee_{i \in I} [\phi_i] := [\bigvee_{i \in I}\phi_i ]$.
\end{defi}
  It is easy to check that $\mathbf{E}$ now forms a frame.
  The theory of a point $x$ in a geometric $\fun{T}$-model $\mf{X}$ is the
  collection of formulas that are true at $x$. The theory of $x$ defines a
  point of $\mathbf{E}$. This motivates the next definition.

\begin{defi}\label{ch3-def-th-map}
  Let $\topo{Z} = \fun{pt} \mathbf{E}$. For every geometric $\fun{T}$-model
  $\mf{X} = (\topo{X}, \gamma, V)$ define the theory map by
  \begin{align*}
    \th_{\mf{X}}
      : \topo{X} \to \topo{Z}
      : x \mapsto \{ [\phi] \in \mathbf{E} \mid \mf{X}, x \Vdash \phi \}.
      &\qedhere
  \end{align*}
\end{defi}

  The space $\topo{Z}$ will turn out to be the state space of a final model in
  $\cat{Mod}(\fun{T})$ and we will see in Proposition~\ref{prop-Z-exists} that
  the theory maps are $\fun{T}$-model morphisms.

\begin{defi}
  Set $\fun{L} = \fun{opn} \circ\, \fun{T} \circ \fun{pt} : \cat{Frm} \to \cat{Frm}$.
  This functor restricts to an endofunctor on $\cat{SFrm}$ which is dual to the
  restriction of $\fun{T}$ to $\cat{Sob}$.
  Since $\Lambda$ is characteristic, the frame $\fun{L}\mathbf{E}$ is generated
  by $\{ \lambda_{\topo{X}}(\widetilde{[\phi_1]}, \ldots, \widetilde{[\phi_n]})
  \mid \lambda \in \Lambda, \phi_i \in \GML(\Phi, \Lambda) \}$.
  Define an $\fun{L}$-algebra structure $\delta : \fun{L}\mathbf{E} \to
  \mathbf{E}$ on generators by
  \begin{equation*}
    \delta
      : \fun{L}\mathbf{E} \to \mathbf{E}
      : \lambda_{\fun{pt}\mathbf{E}}(\widetilde{[\phi_1]}, \ldots,
        \widetilde{[\phi_n]})
        \mapsto [\heartsuit^{\lambda}(\phi_1, \ldots, \phi_n)].
  \end{equation*}
\end{defi}

  We need to show that $\delta$ is well defined. For this purpose it suffices to
  show that the images of the generators of $\mathbf{E}$ satisfy the same
  relations that they satisfy in $\fun{L}\mathbf{E}$. Recall
  $\topo{Z} = \fun{pt}\mathbf{E}$, and therefore $\fun{L}\mathbf{E} = \fun{opn}(\fun{T}\topo{Z})$.

\begin{lem}\label{lem-delta-weldef}
  If
  \begin{equation}\label{eqiv}
    \bigcup_{i \in I}\Big( \bigcap_{j \in J_i} \lambda^{i,j}_{\topo{Z}}
    (\tilde{\phi}_1^{i,j}, \ldots, \tilde{\phi}_{n_{i,j}}^{i,j})\Big)
    = \bigcup_{k \in K}\Big( \bigcap_{\ell \in L_k} \lambda^{k,\ell}_{\topo{Z}}
    (\tilde{\phi}_1^{k,\ell}, \ldots, \tilde{\phi}_{n_{k,\ell}}^{k,\ell})\Big)
  \end{equation}
  then
  \begin{equation}\label{eq}
    \bigvee_{i \in I}\Big( \bigwedge_{j \in J_i} \heartsuit^{\lambda^{i,j}}
        ({\phi}_1^{i,j}, \ldots, {\phi}_{n_{i,j}}^{i,j})\Big)
      \equiv_{\fun{T}, \Lambda} \bigvee_{k \in K}\Big( \bigwedge_{\ell \in L_k}
        \heartsuit^{\lambda^{k,\ell}}({\phi}_1^{k,\ell}, \ldots,
        {\phi}_{n_{k,\ell}}^{k,\ell})\Big),
  \end{equation}
  where the $J_i$ and $L_k$ are finite index sets and $I$ and $K$ are index sets
  of arbitrary size.
\end{lem}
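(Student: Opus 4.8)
The plan is to deduce the semantic equivalence~\eqref{eq} from the frame equality~\eqref{eqiv} by pulling the latter back, along the theory maps, into every $\fun{T}$-model simultaneously. Recall from Definition~\ref{ch3-def-th-map} that each model $\mf{X} = (\topo{X}, \gamma, V) \in \cat{Mod}(\fun{T})$ carries a theory map $\th_{\mf{X}} : \topo{X} \to \topo{Z}$ sending $x$ to $\{ [\phi] \in \mathbf{E} \mid \mf{X}, x \Vdash \phi \}$, where $\topo{Z} = \fun{pt}\mathbf{E}$. First I would check that $\th_{\mf{X}}$ is a well-defined, continuous map: the theory of a point is a completely prime filter of $\mathbf{E}$ (upward closure, closure under finite meets, and complete primeness all follow immediately from the semantic clauses for $\wedge$ and $\bigvee$ together with the definition of $\equiv_{\fun{T}, \Lambda}$), and continuity follows from the identity $\th_{\mf{X}}^{-1}(\widetilde{[\phi]}) = \llb \phi \rrb^{\mf{X}}$, whose right-hand side is open, since the sets $\widetilde{[\phi]}$ form a subbase of $\topo{Z}$. (Only this much is needed here; that $\th_{\mf{X}}$ is moreover a $\fun{T}$-model morphism is the content of Proposition~\ref{prop-Z-exists}, which we cannot invoke yet as it depends on $\delta$.)

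Next I would use naturality of the open predicate liftings along the theory maps. For $\lambda \in \Lambda$ of arity $n$ and $\phi_1, \ldots, \phi_n \in \GML(\Phi, \Lambda)$, naturality of $\lambda$ applied to $\th_{\mf{X}}$, together with $\th_{\mf{X}}^{-1}(\widetilde{[\phi_k]}) = \llb \phi_k \rrb^{\mf{X}}$, gives
\[
  \lambda_{\topo{X}}\big(\llb \phi_1 \rrb^{\mf{X}}, \ldots, \llb \phi_n \rrb^{\mf{X}}\big)
    = \lambda_{\topo{X}}\big(\th_{\mf{X}}^{-1}(\widetilde{[\phi_1]}), \ldots, \th_{\mf{X}}^{-1}(\widetilde{[\phi_n]})\big)
    = (\fun{T}\th_{\mf{X}})^{-1}\big(\lambda_{\topo{Z}}(\widetilde{[\phi_1]}, \ldots, \widetilde{[\phi_n]})\big),
\]
whence $\llb \heartsuit^{\lambda}(\phi_1, \ldots, \phi_n) \rrb^{\mf{X}} = \gamma^{-1}\big(\lambda_{\topo{X}}(\llb \phi_1 \rrb^{\mf{X}}, \ldots)\big) = (\fun{T}\th_{\mf{X}} \circ \gamma)^{-1}\big(\lambda_{\topo{Z}}(\widetilde{[\phi_1]}, \ldots, \widetilde{[\phi_n]})\big)$. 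Since preimages commute with arbitrary unions and intersections and since $\llb\phi \vee \psi\rrb^{\mf{X}} = \llb\phi\rrb^{\mf{X}} \cup \llb\psi\rrb^{\mf{X}}$ and $\llb\phi \wedge \psi\rrb^{\mf{X}} = \llb\phi\rrb^{\mf{X}} \cap \llb\psi\rrb^{\mf{X}}$, applying $(\fun{T}\th_{\mf{X}} \circ \gamma)^{-1}$ to both sides of the hypothesis~\eqref{eqiv} shows that the left-hand side and the right-hand side of~\eqref{eq} have the same interpretation in $\mf{X}$. As $\mf{X} \in \cat{Mod}(\fun{T})$ was arbitrary, this is exactly~\eqref{eq}.

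There is no genuine obstacle here; the two points that want a moment's care are, first, that the hypothesis~\eqref{eqiv} lives in $\fun{L}\mathbf{E} = \fun{opn}(\fun{T}\topo{Z})$ with $\topo{Z}$ sober, so that $\fun{T}\th_{\mf{X}}$ is indeed available (using Assumption~\ref{asm:final}: $\fun{T}$ is an endofunctor on $\cat{Top}$ preserving sobriety, or on $\cat{Sob}$, and both $\topo{Z}$ and each $\topo{X}$ lie in the relevant category), and second, that the subbasic opens $\widetilde{[\phi]}$ of $\topo{Z}$ pull back along $\th_{\mf{X}}$ precisely to the denotations $\llb \phi \rrb^{\mf{X}}$, which is what makes naturality of $\lambda$ bite.
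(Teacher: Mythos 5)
Your proposal is correct and follows essentially the same route as the paper's proof: both pull the hypothesis~\eqref{eqiv} back along the theory map $\th_{\mf{X}}$ into an arbitrary model, using the identity $\th_{\mf{X}}^{-1}(\tilde{\phi}) = \llb \phi \rrb^{\mf{X}}$, naturality of the liftings, and the fact that preimages commute with unions and finite intersections. The only cosmetic difference is that you compose with $\gamma^{-1}$ to compare the full truth sets of the two sides of~\eqref{eq}, whereas the paper stops at the equality of the corresponding subsets of $\fun{T}\topo{X}$ and then reads off the equivalence of satisfaction.
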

\begin{proof}
  We will see that this follows from naturality of $\lambda$. Our strategy is to
  show that the truth sets of the left hand side and right hand side
  of~\eqref{eq} coincide in every geometric $\fun{T}$-model
  $\mf{X} = (\topo{X}, \gamma, V)$.

  Observe that the map $\th_{\mf{X}} : \topo{X} \to \topo{Z}$, which sends a
  point to its theory, is continuous because
  \begin{equation}\label{eqiii}
    \th_{\mf{X}}^{-1}(\tilde{\phi}) = \llb \phi \rrb^{\mf{X}},
  \end{equation}
  which is open in $\topo{X}$ for all formulas $\phi$.
  Compute
  \begin{align*}
    \bigcup_{i \in I}\Big( \bigcap_{j \in J_i} \lambda^{i,j}_{\topo{X}}
      &(\llb\phi_1^{i,j}\rrb^{\mf{X}}, \ldots,
         \llb\phi_{n_{i,j}}^{i,j}\rrb^{\mf{X}})\Big) \\
      &= \bigcup_{i \in I}\Big( \bigcap_{j \in J_i} \lambda^{i,j}_{\topo{X}}
         (\th_{\mf{X}}^{-1}(\tilde{\phi}_1^{i,j}), \ldots, \th_{\mf{X}}^{-1}
         (\tilde{\phi}_{n_{i,j}}^{i,j}))\Big)
      &\text{(By~\eqref{eqiii})}\\
      &= \bigcup_{i \in I}\Big( \bigcap_{j \in J_i} (\fun{T}
         \th_{\mf{X}})^{-1}\big(\lambda^{i,j}_{\topo{Z}}(\tilde{\phi}_1^{i,j},
         \ldots, \tilde{\phi}_{n_{i,j}}^{i,j})\big)\Big)
      &\text{(Naturality of $\lambda$)} \\
      &= (\fun{T}\th_{\mf{X}})^{-1}\Big(\bigcup_{i \in I}\Big(
         \bigcap_{j \in J_i} \lambda^{i,j}_{\topo{Z}}(\tilde{\phi}_1^{i,j},
         \ldots, \tilde{\phi}_{n_{i,j}}^{i,j})\Big)\Big)
      &\text{($\star$)}\\
      &= (\fun{T}\th_{\mf{X}})^{-1}\Big(\bigcup_{k \in K}\Big(
         \bigcap_{\ell \in L_k} \lambda^{k,\ell}_{\topo{Z}}
         (\tilde{\psi}_1^{k,\ell}, \ldots, \tilde{\psi}_{n_{k,\ell}}^{k,\ell})
         \Big)\Big)
      &\text{(Assumption~\eqref{eqiv})} \\
      &= \bigcup_{k \in K}\Big( \bigcap_{\ell \in L_k}
         (\fun{T}\th_{\mf{X}})^{-1}\big(\lambda^{k,\ell}_{\topo{Z}}
         (\tilde{\psi}_1^{k,\ell}, \ldots, \tilde{\psi}_{n_{k,\ell}}^{k,\ell})
         \big)\Big)
      &\text{($\star$)} \\
      &= \bigcup_{k \in K}\Big( \bigcap_{\ell \in L_k}
         \lambda^{k,\ell}_{\topo{X}}(\th_{\mf{X}}^{-1}(\psi_1^{k,\ell}), \ldots,
         \th_{\mf{X}}^{-1}(\psi_{n_{k,\ell}}^{k,\ell}))\Big)
      &\text{(Naturality of $\lambda$)} \\
      &= \bigcup_{k \in K}\Big( \bigcap_{\ell \in L_k}
         \lambda^{k,\ell}_{\topo{X}}(\llb\psi_1^{k,\ell}\rrb^{\mf{X}}, \ldots,
         \llb\psi_{n_{k,\ell}}^{k,\ell}\rrb^{\mf{X}})\Big).
      &\text{(By~\eqref{eqiii})}
\end{align*}
  The steps with ($\star$) hold because inverse images of functions preserve all
  unions and intersections.
  This entails that for all geometric $\fun{T}$-models and all states $x$ in
  $\mf{X}$ we have
  \begin{equation*}
      \mf{X}, x \Vdash \bigvee_{i \in I}\Big( \bigwedge_{j \in J_i}
      \heartsuit^{\lambda^{i,j}}({\phi}_1^{i,j}, \ldots, {\phi}_{n_{i,j}}^{i,j})
      \Big)
    \quad\text{iff}\quad
      \mf{X}, x \Vdash \bigvee_{k \in K}\Big( \bigwedge_{\ell \in L_k}
      \heartsuit^{\lambda^{k,\ell}}({\phi}_1^{k,\ell}, \ldots,
       {\phi}_{n_{k,\ell}}^{k,\ell})\Big),
  \end{equation*}
  and hence~\eqref{eq} holds. Therefore $\delta$ is well defined.
\end{proof}

  The algebra structure on $\mathbf{E}$ entails a coalgebra structure on
  $\topo{Z}$.

\begin{defi}
  Let $\zeta : \topo{Z} \to \fun{T}\topo{Z}$ be the composition
  \begin{equation*}
    \begin{tikzcd}
      \fun{pt}\mathbf{E}
            \arrow[r, "\fun{pt} \delta" above]
        & \fun{pt}(\fun{L}\mathbf{E})
            \arrow[r, equal]
        & \fun{pt}(\fun{opn}(\fun{T}(\fun{pt}\mathbf{E})))
            \arrow[r, "k_{\fun{T}(\fun{pt}\mathbf{E})}^{-1}" above]
        &\fun{T}(\fun{pt}\mathbf{E}).
    \end{tikzcd}
  \end{equation*}
  Here $k_{\fun{T}(\fun{pt}\mathbf{E})} : \fun{T}(\fun{pt}\mathbf{E}) \to \fun{pt}(\fun{opn}(\fun{T}
  (\fun{pt}\mathbf{E})))$ is the isomorphism given in Remark~\ref{rem-isbell-iso}.
  Since $\topo{Z} = \fun{pt}\mathbf{E}$ this indeed defines a map
  $\topo{Z} \to \fun{T}\topo{Z}$.
\end{defi}

  For an object $\Gamma \in \topo{Z}$, the element $(\fun{pt} \delta)(\Gamma)$ is
  the completely prime filter
  \begin{equation*}
    F = \{ \lambda(\tilde{\phi}_1, \ldots, \tilde{\phi}_n) \in \fun{pt}(\fun{opn}
        (\fun{T}(\fun{pt}\mathbf{E}))) \mid  [\heartsuit^{\lambda}(\phi_1,
        \ldots, \phi_n)] \in \Gamma \}
  \end{equation*}
  in $\fun{pt}(\fun{opn}(\fun{T}(\fun{pt}\mathbf{E})))$.
  The element $\zeta(\Gamma)$ is the unique element in
  $\fun{T}(\fun{pt}\mathbf{E})$ corresponding to $F$ under the isomorphism
  $k_{\fun{T}(\fun{pt}\mathbf{E})}$. By definition of
  $k_{\fun{T}(\fun{pt}\mathbf{E})}$, this is the unique element in the
  intersection of
  \begin{equation*}
    \{ \lambda(\tilde{\phi}_1, \ldots, \tilde{\phi}_n) \mid [\heartsuit^{\lambda}
    (\phi_1, \ldots, \phi_n)] \in \Gamma \}.
  \end{equation*}
  Moreover, it follows from the definition of $k_{\fun{T}(\fun{pt}\mathbf{E})}$
  that $[\heartsuit^{\lambda}(\phi_1, \ldots, \phi_n)] \notin \Gamma$ implies
  $\zeta(\Gamma) \notin \lambda(\tilde{\phi}_1, \ldots, \tilde{\phi}_n)$.

\begin{nota}
  If no confusion is likely to occur we will omit the square brackets that
  indicate equivalence classes of formulas in $\mathbf{E}$. That is, we shall
  write $\phi \in \mathbf{E}$ instead of $[\phi] \in \mathbf{E}$.
\end{nota}

  We can now endow the $\fun{T}$-coalgebra $(\topo{Z}, \zeta)$ with a valuation.
  Thereafter we will show that $(\topo{Z}, \zeta)$ together with this valuation
  is final in $\cat{Mod}(\fun{T})$.

\begin{defi}
  Let $V_{\topo{Z}} : \Phi \to \fun{\Omega}\topo{Z}$ be the valuation
  $p \mapsto \tilde{p}$.
\end{defi}

  The triple $\mf{Z} = (\topo{Z}, \zeta, V_{\topo{Z}})$ is a geometric
  $\fun{T}$-model, simply because it is a $\fun{T}$-coalgebra with a valuation.
  We can prove a truth lemma for $\mf{Z}$:

\begin{lem}[Truth lemma]
  We have $\mf{Z}, \Gamma \Vdash \phi$ iff $\phi \in \Gamma$.
\end{lem}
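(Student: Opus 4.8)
The plan is to prove the Truth Lemma by induction on the structure of $\phi$, using the normal form result (Lemma~\ref{lem-norm}) to reduce the work, and crucially exploiting the explicit description of $\zeta(\Gamma)$ given just before the statement. For the base cases, $\mf{Z}, \Gamma \Vdash \top$ always holds and $[\top] = \top_{\mathbf{E}}$ lies in every completely prime filter $\Gamma$; for a proposition letter $p$ we have $\llb p \rrb^{\mf{Z}} = V_{\topo{Z}}(p) = \tilde{p}$, so $\mf{Z},\Gamma \Vdash p$ iff $\Gamma \in \tilde{p}$ iff $\Gamma(p) = 1$ iff $p \in \Gamma$ (recalling $\Gamma \in \topo{Z} = \fun{pt}\mathbf{E}$ is a completely prime filter and $\tilde{p} = \{\Gamma \mid p \in \Gamma\}$). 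The conjunction and arbitrary-disjunction cases are routine: $\mathbf{E}$ is a frame, $\Gamma$ is a completely prime filter, so $\phi \wedge \psi \in \Gamma$ iff $\phi \in \Gamma$ and $\psi \in \Gamma$, and $\bigvee_{i} \phi_i \in \Gamma$ iff $\phi_i \in \Gamma$ for some $i$; combined with $\llb \phi \wedge \psi \rrb^{\mf{Z}} = \llb\phi\rrb^{\mf{Z}} \cap \llb\psi\rrb^{\mf{Z}}$ and $\llb \bigvee_i \phi_i \rrb^{\mf{Z}} = \bigcup_i \llb\phi_i\rrb^{\mf{Z}}$ and the induction hypothesis these go through immediately.

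The heart of the argument is the modal case $\phi = \heartsuit^{\lambda}(\phi_1,\ldots,\phi_n)$. By definition of the semantics, $\mf{Z}, \Gamma \Vdash \heartsuit^{\lambda}(\phi_1,\ldots,\phi_n)$ iff $\zeta(\Gamma) \in \lambda_{\topo{Z}}(\llb\phi_1\rrb^{\mf{Z}},\ldots,\llb\phi_n\rrb^{\mf{Z}})$. By the induction hypothesis, $\llb\phi_k\rrb^{\mf{Z}} = \{ \Delta \in \topo{Z} \mid \phi_k \in \Delta\} = \widetilde{[\phi_k]}$, so this reads $\zeta(\Gamma) \in \lambda_{\topo{Z}}(\widetilde{[\phi_1]},\ldots,\widetilde{[\phi_n]})$. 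Now I invoke the two properties of $\zeta(\Gamma)$ recorded in the excerpt just before the Truth Lemma: $\zeta(\Gamma)$ lies in the intersection of all sets $\lambda(\widetilde{[\phi_1]},\ldots,\widetilde{[\phi_n]})$ with $[\heartsuit^{\lambda}(\phi_1,\ldots,\phi_n)] \in \Gamma$; and $[\heartsuit^{\lambda}(\phi_1,\ldots,\phi_n)] \notin \Gamma$ implies $\zeta(\Gamma) \notin \lambda(\widetilde{[\phi_1]},\ldots,\widetilde{[\phi_n]})$. The second property gives the contrapositive of one direction: if $\zeta(\Gamma) \in \lambda_{\topo{Z}}(\widetilde{[\phi_1]},\ldots,\widetilde{[\phi_n]})$ then $[\heartsuit^{\lambda}(\phi_1,\ldots,\phi_n)] \in \Gamma$. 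The first property gives the converse: if $[\heartsuit^{\lambda}(\phi_1,\ldots,\phi_n)] \in \Gamma$ then $\zeta(\Gamma)$ is in that set by construction. Hence $\mf{Z},\Gamma \Vdash \heartsuit^{\lambda}(\phi_1,\ldots,\phi_n)$ iff $\heartsuit^{\lambda}(\phi_1,\ldots,\phi_n) \in \Gamma$, completing the induction.

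One point that needs care (and is the only real subtlety) is making sure the induction is well-founded despite arbitrary disjunctions: this is exactly what Lemma~\ref{lem-norm} and its Corollary are for — the collection $\mathbf{E}$ is a set, and one can phrase the induction over the well-founded subterm relation on a representative normal-form family, or equivalently argue that the set of formulas on which the lemma holds is closed under all the constructors $\top, p, \wedge, \bigvee, \heartsuit^{\lambda}$ and hence is everything. I would also remark explicitly that the two bullet-point properties of $\zeta(\Gamma)$ quoted above are precisely where characteristicity of $\Lambda$ (ensuring the generators $\lambda_{\topo{Z}}(\widetilde{[\phi_1]},\ldots,\widetilde{[\phi_n]})$ generate $\fun{opn}(\fun{T}\topo{Z})$) and the well-definedness of $\delta$ (Lemma~\ref{lem-delta-weldef}) are being used. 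The main obstacle is therefore not the modal step itself — which is essentially immediate from the preparatory discussion — but rather having set up, in that discussion, the precise characterization of $\zeta(\Gamma)$ via the Isbell isomorphism $k_{\fun{T}(\fun{pt}\mathbf{E})}$; given that, the Truth Lemma falls out cleanly.
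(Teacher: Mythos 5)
Your proposal is correct and follows essentially the same route as the paper: induction on formula complexity, with the propositional, conjunction and disjunction cases being routine and the modal case resolved by the induction hypothesis ($\llb\phi_k\rrb^{\mf{Z}}=\widetilde{[\phi_k]}$) together with the characterization of $\zeta(\Gamma)$ established in the discussion preceding the lemma. Your extra remarks on well-foundedness of the induction and on where characteristicity and Lemma~\ref{lem-delta-weldef} enter are sensible elaborations of points the paper leaves implicit, but do not change the argument.
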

\begin{proof}
  Use induction on the complexity of the formula. The propositional case follows
  immediately from the definition of $V_{\topo{Z}}$. The cases
  $\phi = \phi_1 \wedge \phi_2$ and $\phi = \bigvee_{i \in i}\phi_i$ are routine.
  So suppose $\phi = \heartsuit^{\lambda}(\phi_1, \ldots, \phi_n)$. We have
  \begin{align*}
    \mf{Z}, \Gamma \Vdash \heartsuit^{\lambda}(\phi_1, \ldots, \phi_n)
      \quad&\text{iff}\quad \zeta(\Gamma) \in \lambda_{\topo{Z}}
        (\llb \phi_1 \rrb^{\mf{Z}}, \ldots, \llb \phi_n \rrb^{\mf{Z}})
      &\text{(Definition of $\Vdash$)} \\
      &\text{iff}\quad \zeta(\Gamma) \in \lambda_{\topo{Z}}
        (\tilde{\phi}_1, \ldots, \tilde{\phi}_n)
      &\text{(Induction)} \\
      &\text{iff}\quad \heartsuit^{\lambda}(\phi_1, \ldots, \phi_n) \in \Gamma.
      &\text{(Definition of $\zeta$)}
  \end{align*}
  This proves the lemma.
\end{proof}

\begin{prop}\label{prop-Z-exists}
  For every geometric $\fun{T}$-model $\mf{X} = (\topo{X}, \gamma, V)$ the map
  $\th_{\mf{X}} : \topo{X} \to \topo{Z}$ is a $\fun{T}$-model morphism.
\end{prop}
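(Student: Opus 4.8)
The plan is to verify the two conditions for $\th_{\mf{X}}$ to be a geometric $\fun{T}$-model morphism: it must be a coalgebra morphism, i.e.\ $\fun{T}\th_{\mf{X}} \circ \gamma = \zeta \circ \th_{\mf{X}}$, and it must respect the valuations, i.e.\ $\th_{\mf{X}}^{-1} \circ V_{\topo{Z}} = V$. Continuity of $\th_{\mf{X}}$ has already been noted in the proof of Lemma~\ref{lem-delta-weldef} (equation~\eqref{eqiii}: $\th_{\mf{X}}^{-1}(\tilde\phi) = \llb\phi\rrb^{\mf{X}}$, which is open), so that part is free.

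First I would dispatch the valuation condition: for $p \in \Phi$, $\th_{\mf{X}}^{-1}(V_{\topo{Z}}(p)) = \th_{\mf{X}}^{-1}(\tilde p) = \llb p \rrb^{\mf{X}} = V(p)$, using the definition of $V_{\topo{Z}}$ and equation~\eqref{eqiii}. Next, for the coalgebra condition, since $\fun{T}\topo{Z}$ is sober (as $\fun{T}$ preserves sobriety and $\topo{Z} = \fun{pt}\mathbf{E}$ is sober), a point of $\fun{T}\topo{Z}$ is determined by the open sets containing it, and by the characteristicness of $\Lambda$ these opens are generated by the $\lambda_{\topo{Z}}(\tilde\phi_1,\ldots,\tilde\phi_n)$. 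So it suffices to show, for every $x \in \topo{X}$ and every such generator, that $(\fun{T}\th_{\mf{X}})(\gamma(x)) \in \lambda_{\topo{Z}}(\tilde\phi_1,\ldots,\tilde\phi_n)$ iff $\zeta(\th_{\mf{X}}(x)) \in \lambda_{\topo{Z}}(\tilde\phi_1,\ldots,\tilde\phi_n)$. For the left side, naturality of $\lambda$ gives
\[
  (\fun{T}\th_{\mf{X}})^{-1}\big(\lambda_{\topo{Z}}(\tilde\phi_1,\ldots,\tilde\phi_n)\big)
    = \lambda_{\topo{X}}\big(\th_{\mf{X}}^{-1}(\tilde\phi_1),\ldots,\th_{\mf{X}}^{-1}(\tilde\phi_n)\big)
    = \lambda_{\topo{X}}\big(\llb\phi_1\rrb^{\mf{X}},\ldots,\llb\phi_n\rrb^{\mf{X}}\big),
\]
so $(\fun{T}\th_{\mf{X}})(\gamma(x)) \in \lambda_{\topo{Z}}(\tilde\phi_1,\ldots,\tilde\phi_n)$ iff $\gamma(x) \in \lambda_{\topo{X}}(\llb\phi_1\rrb^{\mf{X}},\ldots,\llb\phi_n\rrb^{\mf{X}})$, which by the semantic clause for $\heartsuit^\lambda$ is exactly $\mf{X}, x \Vdash \heartsuit^{\lambda}(\phi_1,\ldots,\phi_n)$, i.e.\ $\heartsuit^{\lambda}(\phi_1,\ldots,\phi_n) \in \th_{\mf{X}}(x)$. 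For the right side, recall from the discussion after the definition of $\zeta$ that $\zeta(\Gamma) \in \lambda(\tilde\phi_1,\ldots,\tilde\phi_n)$ whenever $\heartsuit^{\lambda}(\phi_1,\ldots,\phi_n) \in \Gamma$, and the converse implication $\heartsuit^{\lambda}(\phi_1,\ldots,\phi_n)\notin\Gamma \Rightarrow \zeta(\Gamma)\notin\lambda(\tilde\phi_1,\ldots,\tilde\phi_n)$ was also noted there; applying this with $\Gamma = \th_{\mf{X}}(x)$ gives $\zeta(\th_{\mf{X}}(x)) \in \lambda(\tilde\phi_1,\ldots,\tilde\phi_n)$ iff $\heartsuit^{\lambda}(\phi_1,\ldots,\phi_n)\in\th_{\mf{X}}(x)$. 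Comparing the two characterizations closes the argument.

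The main obstacle is making the reduction ``points of $\fun{T}\topo{Z}$ are separated by the generating opens'' fully rigorous: one must invoke sobriety of $\fun{T}\topo{Z}$ together with the fact that $\Lambda$ being characteristic means the $\lambda_{\topo{Z}}(\tilde\phi_1,\ldots,\tilde\phi_n)$ form a subbase, so that two points with the same such opens are topologically indistinguishable and hence (by $T_0$-ness of sober spaces) equal. An alternative, perhaps cleaner, route avoids pointwise reasoning entirely: show $\fun{opn}(\fun{T}\th_{\mf{X}})$ and $\fun{opn}(\gamma)\circ\rho$-type maps agree on generators of $\fun{L}\mathbf{E} = \fun{opn}(\fun{T}\topo{Z})$, then dualize via the $\fun{pt}\dashv\fun{opn}$ adjunction and the isomorphism $k$; this is essentially repackaging the naturality computation already performed in Lemma~\ref{lem-delta-weldef}. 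Either way, the heavy lifting has been done; what remains is bookkeeping with the definitions of $\zeta$, $\th_{\mf{X}}$, and the characteristic property.
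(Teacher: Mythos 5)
Your proposal is correct and follows essentially the same route as the paper's proof: the valuation condition via $\th_{\mf{X}}^{-1}(\tilde p)=\llb p\rrb^{\mf{X}}$, and the coalgebra square via $T_0$-separation of points of the sober space $\fun{T}\topo{Z}$ by the subbasic opens $\lambda_{\topo{Z}}(\tilde\phi_1,\ldots,\tilde\phi_n)$, with naturality of $\lambda$ on one side and the definition of $\zeta$ on the other. The paper performs exactly the chain of equivalences you describe, so no further comment is needed.
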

\begin{proof}
  We need to show that $\th_{\mf{X}}$ is a $\fun{T}$-coalgebra morphism and that
  $\th_{\mf{X}}^{-1} \circ V_{\mf{Z}} = V$. The latter follows from the fact that
  for every proposition letter $p$ we have
  \begin{equation*}
    V(p)
      = \{ x \in \mathbb{X} \mid \mf{X}, x \Vdash p \}
      = \th_{\mf{X}}^{-1}(\tilde{p})
      = \th_{\mf{X}}^{-1}(V_{\mf{Z}}(p)).
  \end{equation*}

  In order to show that $\th_{\mf{X}}$ is a $\fun{T}$-coalgebra morphism, we have
  to show that the following diagram commutes:
  \begin{equation*}
    \begin{tikzcd}
      \topo{X} \arrow[r, "\th_{\mf{X}}" above]
            \arrow[d, "\gamma" left]
        & \topo{Z}
            \arrow[d, "\zeta" right] \\
      \fun{T}\topo{X}
            \arrow[r, "\fun{T}\th_{\mf{X}}" below]
        & \fun{T}\topo{Z}
    \end{tikzcd}
  \end{equation*}
  Let $x \in \topo{X}$. Since $\fun{T}\topo{Z}$ is sober, hence $T_0$, it
  suffices to show that $\fun{T}\th_{\mf{X}}(\gamma(x))$ and
  $\zeta(\th_{\mf{X}}(x))$ are in precisely the same opens of $\fun{T}\topo{Z}$.
  Moreover, we know that the open sets of $\fun{T}\topo{Z}$ are generated by the
  sets $\lambda_{\topo{Z}}(\tilde{\phi}_1, \ldots, \tilde{\phi}_n)$, so it
  suffices to show that for all $\lambda \in \Lambda$ and $\phi_i \in
  \GML(\Phi, \Lambda)$ we have
  \begin{equation*}
    \fun{T}\th_{\mf{X}}(\gamma(x))
      \in \lambda_{\topo{Z}}(\tilde{\phi}_1, \ldots, \tilde{\phi}_n)
    \quad\text{iff}\quad
      \zeta(\th_{\mf{X}}(x)) \in \lambda_{\topo{Z}}(\tilde{\phi}_1, \ldots,
      \tilde{\phi}_n).
  \end{equation*}
  This follows from the following computation,
  \begin{align*}
    \fun{T}\th_{\mf{X}}(\gamma(x))
      &\in \lambda_{\topo{Z}}(\tilde{\phi}_1, \ldots, \tilde{\phi}_n) \\
      &\text{iff}\quad \gamma(x) \in (\fun{T}\th_{\mf{X}})^{-1}
        (\lambda_{\topo{Z}}(\tilde{\phi}_1, \ldots, \tilde{\phi}_n)) \\
      &\text{iff}\quad \gamma(x) \in \lambda_{\topo{X}}(\th_{\mf{X}}^{-1}
        (\tilde{\phi}_1), \ldots, \th_{\mf{X}}^{-1}(\tilde{\phi}_n))
      &\text{(Naturality of $\lambda$)} \\
      &\text{iff}\quad \gamma(x) \in \lambda_{\topo{X}}
        (\llb \phi_1 \rrb^{\mf{X}}, \ldots, \llb \phi_n \rrb^{\mf{X}})
      &\text{(By~\eqref{eqiii})} \\
      &\text{iff}\quad \mf{X}, x \Vdash \heartsuit^{\lambda}
        (\phi_1, \ldots, \phi_n)
      &\text{(Definition of $\Vdash$)} \\
      &\text{iff}\quad \heartsuit^{\lambda}(\phi_1, \ldots, \phi_n) \in
        \th_{\mf{X}}(x)
      &\text{(Definition of $\th_{\mf{X}}$)} \\
      &\text{iff}\quad \zeta(\th_{\mf{X}}(x)) \in
        \lambda_{\topo{Z}}(\tilde{\phi}_1, \ldots, \tilde{\phi}_n)
      &\text{(Definition of $\zeta$)}
  \end{align*}
  This proves the proposition.
\end{proof}

  The developed theory results in the following theorem.

\begin{thm}\label{thm-final-modT}
  Let $\fun{T}$ be an endofunctor on $\cat{Top}$ which preserves sobriety, and
  $\Lambda$ a Scott-continuous characteristic geometric modal signature for
  $\fun{T}$. Then the geometric $\fun{T}$-model
  $\mf{Z} = (\topo{Z}, \zeta, V_{\topo{Z}})$ is final in $\cat{Mod}(\fun{T})$.
\end{thm}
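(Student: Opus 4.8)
The plan is to reduce the theorem to facts already established: existence of a morphism $\mf{X}\to\mf{Z}$, and uniqueness. First I would note that $\mf{Z}=(\topo{Z},\zeta,V_{\topo{Z}})$ is indeed an object of $\cat{Mod}(\fun{T})$: the space $\topo{Z}=\fun{pt}\mathbf{E}$ is sober, and since $\fun{T}$ preserves sobriety the codomain $\fun{T}\topo{Z}$ of $\zeta$ is sober as well, so $\zeta$ is a legitimate coalgebra structure in the relevant base category (whether that is $\cat{Sob}$ or $\cat{Top}$). Existence of a $\fun{T}$-model morphism $\mf{X}\to\mf{Z}$ is then exactly Proposition~\ref{prop-Z-exists}: the theory map $\th_{\mf{X}}:\topo{X}\to\topo{Z}$ is such a morphism.

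It remains to prove that $\th_{\mf{X}}$ is the \emph{unique} $\fun{T}$-model morphism from $\mf{X}$ to $\mf{Z}$. So let $f:\mf{X}\to\mf{Z}$ be an arbitrary $\fun{T}$-model morphism and fix $x\in\topo{X}$. A point of $\topo{Z}=\fun{pt}\mathbf{E}$ is a frame homomorphism $\mathbf{E}\to 2$, and two such homomorphisms coincide precisely when they send the same elements of $\mathbf{E}$ to $\top$; equivalently, a point of $\topo{Z}$ is determined by the completely prime filter of formula-classes it ``contains''. Now by Proposition~\ref{prop-mor-pres-truth} morphisms preserve truth, so for every $\phi\in\GML(\Phi,\Lambda)$ we have $\mf{X},x\Vdash\phi$ iff $\mf{Z},f(x)\Vdash\phi$; and by the Truth Lemma the latter holds iff $[\phi]\in f(x)$. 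Chaining these, $[\phi]\in f(x)$ iff $\mf{X},x\Vdash\phi$, and the right-hand side is by definition the condition $[\phi]\in\th_{\mf{X}}(x)$. Hence $f(x)$ and $\th_{\mf{X}}(x)$ contain the same formula-classes, so $f(x)=\th_{\mf{X}}(x)$. As $x\in\topo{X}$ was arbitrary, $f=\th_{\mf{X}}$, which completes the proof of finality.

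I do not expect a genuine obstacle here, since the two nontrivial ingredients — that $\th_{\mf{X}}$ is a $\fun{T}$-model morphism (Proposition~\ref{prop-Z-exists}) and the Truth Lemma for $\mf{Z}$ — are already in place, and the reduction of uniqueness to truth-preservation is immediate. The only points requiring care are bookkeeping: the identification of a point of $\fun{pt}\mathbf{E}$ with the completely prime filter of formula-classes it determines (so that agreement on ``which formulas hold'' forces equality of points), and the sanity check that $\mf{Z}$ lies in the same category $\cat{Mod}(\fun{T})$ as $\mf{X}$ in both flavours of Assumption~\ref{asm:final}.
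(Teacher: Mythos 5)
Your proposal is correct and follows essentially the same route as the paper: existence of the morphism via Proposition~\ref{prop-Z-exists}, and uniqueness by combining truth preservation (Proposition~\ref{prop-mor-pres-truth}) with the Truth Lemma to conclude that any morphism must agree with the theory map pointwise. Your explicit remarks on why points of $\fun{pt}\mathbf{E}$ are determined by the formulas they contain, and why $\mf{Z}$ lives in the right category, are just slightly more careful bookkeeping than the paper bothers with.
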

\begin{proof}
  Proposition~\ref{prop-Z-exists} states that for every geometric $\fun{T}$-model
  $\mf{X} = (\topo{X}, \gamma, V)$ there exists a $\fun{T}$-coalgebra morphism
  $\th_{\mf{X}} : \mf{X} \to \mf{Z}$, so we only need to show that this morphism
  is unique.

  Let $f : \mf{X} \to \mf{Z}$ be any coalgebra morphism. We know from
  Proposition~\ref{prop-mor-pres-truth} that coalgebra morphisms preserve truth, so for all
  $x \in \topo{X}$ we have $\phi \in f(x)$ iff $\mf{Z}, f(x) \Vdash \phi$ iff
  $\mf{X}, x \Vdash \phi$. Therefore we must have $f(x) = \th_{\mf{X}}(x)$.
\end{proof}

  As an immediate corollary we obtain the following theorem. Recall from
  Definition~\ref{def-beh-eq} that two states $x$ and $x'$ are behaviourally
  equivalent in $\cat{Mod}(\fun{T})$ if there are $\fun{T}$-model morphisms $f$
  and $f'$ with $f(x) = f'(x')$.

\begin{thm}\label{thm-me-be}
  Under the assumptions of Theorem~\ref{thm-final-modT}, we have
  ${\equiv_{\Lambda}} = \, {\simeq_{\cat{Mod}(\fun{T})}}$.
\end{thm}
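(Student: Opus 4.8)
The plan is to prove the two inclusions separately, the harder direction being ${\equiv_{\Lambda}} \subseteq {\simeq_{\cat{Mod}(\fun{T})}}$; the reverse inclusion was already noted (immediately after Proposition~\ref{prop-mor-pres-truth}) to follow from the fact that geometric $\fun{T}$-model morphisms preserve truth. I would restate that argument briefly for completeness: if $x$ and $x'$ are behaviourally equivalent, witnessed by morphisms $f : \mf{X} \to \mf{Y}$ and $f' : \mf{X}' \to \mf{Y}$ with $f(x) = f'(x')$, then for every $\phi \in \GML(\Phi, \Lambda)$ Proposition~\ref{prop-mor-pres-truth} gives $\mf{X}, x \Vdash \phi$ iff $\mf{Y}, f(x) \Vdash \phi$ iff $\mf{Y}, f'(x') \Vdash \phi$ iff $\mf{X}', x' \Vdash \phi$, so $x \modeq x'$.

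For the nontrivial inclusion I would invoke the final model $\mf{Z} = (\topo{Z}, \zeta, V_{\topo{Z}})$ from Theorem~\ref{thm-final-modT}. Suppose $x \in \mf{X}$ and $x' \in \mf{X}'$ are modally equivalent. By Proposition~\ref{prop-Z-exists} the theory maps $\th_{\mf{X}} : \mf{X} \to \mf{Z}$ and $\th_{\mf{X}'} : \mf{X}' \to \mf{Z}$ are $\fun{T}$-model morphisms. By Definition~\ref{ch3-def-th-map}, $\th_{\mf{X}}(x) = \{ [\phi] \in \mathbf{E} \mid \mf{X}, x \Vdash \phi \}$ and similarly for $\th_{\mf{X}'}(x')$; since $x \modeq x'$ means exactly that the same formulas hold at $x$ and at $x'$, these two sets coincide, i.e.\ $\th_{\mf{X}}(x) = \th_{\mf{X}'}(x')$. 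Taking $\mf{Y} = \mf{Z}$, $f = \th_{\mf{X}}$ and $f' = \th_{\mf{X}'}$ in Definition~\ref{def-beh-eq} then witnesses $x \simeq_{\cat{Mod}(\fun{T})} x'$.

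There is essentially no serious obstacle here: the theorem is an immediate corollary of the existence of the final model together with the explicit description of the theory maps, and the only thing to be careful about is making sure the equivalence class in $\mathbf{E}$ is the right notion — but Definition~\ref{ch3-def-th-map} already packages the theory of a point as an element of $\fun{pt}\mathbf{E}$, so modal equivalence of two points is literally equality of their theory-map images. I would therefore present the argument as a short two-paragraph proof (one paragraph per inclusion), citing Proposition~\ref{prop-mor-pres-truth}, Proposition~\ref{prop-Z-exists}, and Theorem~\ref{thm-final-modT}, and remark that the hypotheses of Theorem~\ref{thm-final-modT} (preservation of sobriety and a Scott-continuous characteristic geometric modal signature) are used only insofar as they guarantee that $\mf{Z}$ exists.
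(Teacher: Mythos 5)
Your proposal is correct and follows exactly the paper's own argument: one direction via Proposition~\ref{prop-mor-pres-truth}, and the other by observing that modally equivalent states have equal images under the theory maps into the final model $\mf{Z}$, which witnesses behavioural equivalence. Your write-up merely spells out the details that the paper's two-sentence proof leaves implicit.
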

\begin{proof}
  If $x$ and $x'$ are behaviourally equivalent, then they are modally equivalent
  by Proposition~\ref{prop-mor-pres-truth}. Conversely, if they are modally
  equivalent, then $\th_{\mf{X}}(x) = \th_{\mf{X}'}(x')$ by construction, so they
  are behaviourally equivalent.
\end{proof}

\begin{rem}\label{rem-top-sob}
  If $\fun{T}$ is an endofunctor on $\cat{Sob}$ instead of $\cat{Top}$, then the
  same procedure yields a final model in $\cat{Mod}(\fun{T})$. In particular,
  $\fun{T}$ need not be the restriction of a $\cat{Top}$-endofunctor.
  However, if $\fun{T}$ is an endofunctor on $\cat{KSob}$ or $\cat{KHaus}$ the
  procedure above does not guarantee a final coalgebra in $\cat{Mod}(\fun{T})$;
  indeed the state space $\topo{Z}$ of the final coalgebra $\mf{Z}$ we just
  constructed need not be compact sober nor compact Hausdorff.

  Of course, there may be a different way to attain similar results for
  $\cat{KSob}$ or $\cat{KHaus}$.  We leave this as an interesting open question.
  In Theorem~\ref{thm-ksob-me-be} we prove an analogue of Theorem~\ref{thm-me-be}
  for endofunctors on $\cat{KSob}$.
\end{rem}

\section{Bisimulations}\label{sec-bisim}

  This section is devoted to bisimulations and bisimilarity between coalgebraic
  geometric models.
  Bisimulations are important tools in the study of modal logics.
  They provide a structural notion of semantic equivalence:
  bisimilar worlds satisfy precisely the same logical formulas.
  If the converse is also true then the logical language is powerful enough
  to distinguish non-bisimilar states. This is called the
  \emph{Hennessy-Milner property}~\cite{HenMil85}.

  We compare two notions of bisimilarity, modal equivalence
  (from Definition~\ref{def-gml-sem}) and behavioural equivalence
  (Definition~\ref{def-beh-eq}). Again, where $\cat{C}$ is a full subcategory of
  $\cat{Top}$ and $\fun{T}$ an endofunctor on $\cat{C}$, we give definitions and
  propositions in this generality where possible. When necessary, we will
  restrict our scope to particular instances of $\cat{C}$.

\begin{defi}
  Let $\mf{X} = (\topo{X}, \gamma, V)$ and $\mf{X}' = (\topo{X}', \gamma', V')$
  be two geometric $\fun{T}$-models.
  Let $\topo{B}$ be an object in $\cat{C}$ such that $\topo{B} \subseteq
  \topo{X} \times \topo{X}'$, with projections $\pi : \topo{B} \to
  \topo{X}$ and $\pi' : \topo{B} \to \topo{X}'$. Then $\topo{B}$ is called an
  \emph{Aczel-Mendler bisimulation} between $\mf{X}$ and $\mf{X}'$ if for all
  $(x, x') \in \topo{B}$ we have $x \in V(p)$ iff $x' \in V'(p)$ and there exists
  a transition map $\beta : \topo{B} \to \fun{T}\topo{B}$ that makes $\pi$ and
  $\pi'$ coalgebra morphisms. That is, $\beta$ is such that the following diagram
  commutes:
  \begin{equation*}
    \begin{tikzcd}
      \topo{X}
            \arrow[d, "\gamma" left]
        & \topo{B} \arrow[l, "\pi" above]
            \arrow[r, "\pi'" above]
            \arrow[d, "\beta" right]
        & \topo{X}'
            \arrow[d, "\gamma'" right] \\
      \fun{T}\topo{X}
        & \fun{T}\topo{B}
            \arrow[l, "\fun{T}\pi" below]
            \arrow[r, "\fun{T}\pi'" below]
        & \fun{T}\topo{X}'
    \end{tikzcd}
  \end{equation*}
  Two states $x \in \fun{U}\topo{X}, x' \in \fun{U}\topo{X}'$ are called
  \emph{bisimilar}, notation $x \leftrightarroweq x'$, if they are linked by an
  Aczel-Mendler bisimulation.
\end{defi}

  Note that $\leftrightarroweq$ defines a relation between the sets underlying
  the models $\mf{X}$ and $\mf{X}'$. So while Aczel-Mendler bisimulations are
  defined as topological spaces, the resulting notion of bisimilarity is
  simply a relation.

  It follows from Proposition~\ref{prop-mor-pres-truth} that bisimilar states
  satisfy the same formulas.
  Furthermore, if $\cat{C}$ has pushouts then
  it follows from taking pushouts that Aczel-Mendler
  bisimilarity implies behavioural equivalence.
  If moreover $\fun{T}$ preserves weak pullbacks, the converse holds
  as well~\cite{Rut00}.

  However, we do not wish to make this assumption on topological spaces, since
  few functors seem to preserve weak pullbacks. For example, the Vietoris functor
  does not preserve weak pullbacks~\cite[Corollary~4.3]{BezFonVen10} and neither
  does the monotone functor from Definition~\ref{def-mon-khaus}. (To see the
  latter statement, consider the example given in Section~4 of~\cite{HanKup04}
  and equip the sets in use with the discrete topology.)
  Therefore we define $\Lambda$-bisimulations for $\cat{Top}$-coalgebras as an
  alternative to Aczel-Mendler bisimulations. This notion is an adaptation of
  ideas in~\cite{BakHan17, GorSch13}. Under some conditions on $\Lambda$,
  $\Lambda$-bisimilarity coincides with behavioural equivalence.

  In the next definition we need the concept of coherent pairs: If $X$ and $X'$
  are two sets and $B \subseteq X \times X'$ is a relation, then a pair
  $(a, a') \in \fun{P}X \times \fun{P}X'$ is called \emph{$B$-coherent} if
  $B[a] \subseteq a'$ and $B^{-1}[a'] \subseteq a$. For details and properties
  see Section~2 in~\cite{HanKupPac09}.

\begin{defi}\label{def-lambis}
  Let $\fun{T}$ be an endofunctor on $\cat{C}$, $\Lambda$ a geometric modal
  signature for $\fun{T}$ and $\mf{X} = (\topo{X}, \gamma, V)$ and
  $\mf{X}' = (\topo{X}', \gamma', V')$ two geometric $\fun{T}$-models.
  A \emph{$\Lambda$-bisimulation} between $\mf{X}$ and $\mf{X}'$ is a relation
  $B \subseteq \fun{U}\topo{X} \times \fun{U}\topo{X}'$ such that for all
  $(x, x') \in B$ and $p \in \Phi$ we have
  \begin{equation*}
    x \in V(p) \quad\text{ iff }\quad x' \in V'(p)
  \end{equation*}
  and for all $\lambda \in \Lambda$ and all tuples of $B$-coherent pairs of opens
  $(a_i, a_i') \in \fun{\Omega}\topo{X} \times \fun{\Omega}\topo{X}'$:
  \begin{equation}\label{ch3-eq-lambis2}
    \gamma(x) \in \lambda_{\topo{X}}(a_1, \ldots, a_n)
      \iff \gamma'(x') \in \lambda_{\topo{X}'}(a_1', \ldots, a_n').
  \end{equation}
  Two states are called $\Lambda$-bisimilar if there is a $\Lambda$-bisimulation
  linking them, notation: $x \leftrightarroweq_{\Lambda} x'$.
\end{defi}

  We give an alternative characterisation of~\eqref{ch3-eq-lambis2} to elucidate
  the connection with~\cite{BakHan17,GroHanKur20}.

\begin{rem}
  In the abstract setting of~\cite{GroHanKur20}, bisimulations are taken to
  be spans in the base category satisfying certain conditions.
  By contrast, in Definition~\ref{def-lambis} above we define bisimulations
  using relations between topological spaces, rather than spans in $\cat{Top}$.
  In order to explain the connection between our approach and that
  in~\cite{BakHan17,GroHanKur20},
  we equip a relation between topological spaces with the subspace
  topology.

  So let $\topo{B} \subseteq \topo{X} \times \topo{X}'$ be a relation endowed with the
  subspace topology and let $\pi : \topo{B} \to \topo{X}$ and $\pi' : \topo{B} \to \topo{X}'$
  be projections. Then $(a, a') \in \fun{\Omega}\topo{X} \times
  \fun{\Omega}\topo{X}'$ is $\topo{B}$-coherent iff $\pi^{-1}(a) = (\pi')^{-1}(a')$.
  Let $P$ be the pullback of the cospan
  $\begin{tikzcd}[column sep=2.4em, cramped]
    \fun{opn}\topo{X} \arrow[r, "\fun{opn}\pi" above]
    & \fun{opn}\topo{B} & \fun{opn}\topo{X}' \arrow[l, "\fun{opn}\pi'" above]
  \end{tikzcd}$ 
  in $\cat{Frm}$ and let $p : P \to \fun{opn}\topo{X}$ and
  $p' : P \to \fun{opn}\topo{X}'$ be the
  corresponding projections.
  Then the $\topo{B}$-coherent pairs are precisely $(p(x), p'(x))$, where $x$ ranges
  over $P$. It follows from the definitions that equation~\eqref{ch3-eq-lambis2}
  holds for all $\topo{B}$-coherent pairs if and only if
  \begin{equation*}
    \fun{opn}\pi \circ \fun{opn}\gamma \circ \lambda_{\topo{X}} \circ p^n
      = \fun{opn}\pi' \circ \fun{opn}\gamma' \circ \lambda_{\topo{X}'}
        \circ (p')^n,
  \end{equation*}
  where $\lambda$ is $n$-ary.
\end{rem}

  As desired, $\Lambda$-bisimilar states always satisfy the same formulas.

\begin{prop}\label{prop-lambis-me}
  Let $\fun{T}$ be an endofunctor on $\cat{C}$ and $\Lambda$ a geometric modal
  signature for $\fun{T}$. Then ${\leftrightarroweq_{\Lambda}} \subseteq
  {\equiv_{\Lambda}}$.
\end{prop}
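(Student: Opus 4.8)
The plan is to prove, by induction on the structure of $\phi \in \GML(\Phi, \Lambda)$, the following strengthening of the statement: whenever $B$ is a $\Lambda$-bisimulation between $\mf{X} = (\topo{X}, \gamma, V)$ and $\mf{X}' = (\topo{X}', \gamma', V')$, the pair of truth sets $(\llb \phi \rrb^{\mf{X}}, \llb \phi \rrb^{\mf{X}'})$ is $B$-coherent, i.e.\ $B[\llb \phi \rrb^{\mf{X}}] \subseteq \llb \phi \rrb^{\mf{X}'}$ and $B^{-1}[\llb \phi \rrb^{\mf{X}'}] \subseteq \llb \phi \rrb^{\mf{X}}$. A preliminary sub-induction records that every truth set $\llb \phi \rrb^{\mf{X}}$ is open: this uses that valuations take values in $\fun{\Omega}\topo{X}$, that open sets are closed under finite intersections and arbitrary unions, that each $\lambda \in \Lambda$ is an open predicate lifting, and that $\gamma$ is continuous — so the pairs occurring below genuinely are pairs of opens, as required by Definition \ref{def-lambis}. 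Once the claim is established the proposition is immediate: if $(x, x') \in B$ and $x \leftrightarroweq_{\Lambda} x'$, then for every $\phi$ we get $x \in \llb \phi \rrb^{\mf{X}}$ iff $x' \in \llb \phi \rrb^{\mf{X}'}$ from $B$-coherence, whence $x \modeq x'$.

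For the base cases, $\phi = \top$ is trivial since $\llb \top \rrb^{\mf{X}} = \fun{U}\topo{X}$ and $\llb \top \rrb^{\mf{X}'} = \fun{U}\topo{X}'$, and $\phi = p$ is precisely the propositional clause in the definition of a $\Lambda$-bisimulation (unravelled: $x' \in B[V(p)]$ implies $x' \in V'(p)$, and symmetrically for $B^{-1}$ and $V'(p)$). For the inductive step on $\wedge$ and $\bigvee$ I will use the routine fact that $B$-coherent pairs are closed under componentwise finite meets and arbitrary joins: this follows from $B[a \cap b] \subseteq B[a] \cap B[b]$ and $B[\bigcup_{i} a_i] = \bigcup_{i} B[a_i]$ (and the analogous identities for $B^{-1}$), combined with the induction hypothesis applied to the immediate subformulas.

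The modal case $\phi = \heartsuit^{\lambda}(\phi_1, \ldots, \phi_n)$ is the crux. By the induction hypothesis each $(\llb \phi_j \rrb^{\mf{X}}, \llb \phi_j \rrb^{\mf{X}'})$ is a $B$-coherent pair of opens, so the tuples $(\llb \phi_1 \rrb^{\mf{X}}, \ldots, \llb \phi_n \rrb^{\mf{X}})$ and $(\llb \phi_1 \rrb^{\mf{X}'}, \ldots, \llb \phi_n \rrb^{\mf{X}'})$ are of exactly the form to which condition \eqref{ch3-eq-lambis2} of Definition \ref{def-lambis} applies. Hence for every $(x, x') \in B$ we have $\gamma(x) \in \lambda_{\topo{X}}(\llb \phi_1 \rrb^{\mf{X}}, \ldots, \llb \phi_n \rrb^{\mf{X}})$ iff $\gamma'(x') \in \lambda_{\topo{X}'}(\llb \phi_1 \rrb^{\mf{X}'}, \ldots, \llb \phi_n \rrb^{\mf{X}'})$, which by Definition \ref{def-gml-sem} says exactly that $x \in \llb \phi \rrb^{\mf{X}}$ iff $x' \in \llb \phi \rrb^{\mf{X}'}$, i.e.\ $(\llb \phi \rrb^{\mf{X}}, \llb \phi \rrb^{\mf{X}'})$ is $B$-coherent. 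There is no serious obstacle; the only point requiring a moment's care is the preliminary observation that every truth set is open, which is what makes the "$B$-coherent pairs of opens" clause of Definition \ref{def-lambis} applicable at each stage of the induction.
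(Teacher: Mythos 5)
Your proof is correct and follows essentially the same route as the paper: an induction on formula complexity whose induction hypothesis (stated by you as $B$-coherence of the truth-set pairs, by the paper as pointwise agreement on all $B$-related states — the two are equivalent) feeds directly into clause \eqref{ch3-eq-lambis2} of Definition \ref{def-lambis} for the modal case. Your explicit preliminary check that truth sets are open (so that the coherent pairs really are pairs of opens) is a detail the paper leaves implicit, but it is a welcome one.
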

\begin{proof}
  Let $B$ be a $\Lambda$-bisimulation between geometric $\fun{T}$-models $\mf{X}$
  and $\mf{X}'$, and suppose $xBx'$. Using induction on the complexity of the
  formula, we show that $\mf{X}, x \Vdash \phi$ iff $\mf{X}', x' \Vdash \phi$ for
  all $\phi \in \GML(\Phi, \Lambda)$. The propositional case is by definition, and
  $\wedge$ and $\bigvee$ are routine.
  Suppose $\mf{X}, x \Vdash \heartsuit^{\lambda}(\phi_1, \ldots, \phi_{n})$, then
  $\gamma(x) \in \lambda_{\topo{X}}(\llb \phi_1 \rrb^{\mf{X}}, \ldots,
  \llb \phi_{n} \rrb^{\mf{X}})$. By the induction hypothesis
  $(\llb \phi_i \rrb^{\mf{X}}, \llb \phi_i \rrb^{\mf{X}'})$ is $B$-coherent for
  all $i$.
  Then $\gamma'(x') \in \lambda_{\topo{X}'}(\llb \phi_1 \rrb^{\mf{X}'}, \ldots,
  \llb \phi_{n} \rrb^{\mf{X}'})$ since $B$ is a $\Lambda$-bisimulation, hence
  $\mf{X}', x' \Vdash \heartsuit^{\lambda}(\phi_1, \ldots, \phi_{n})$. The
  converse is proven symmetrically.
\end{proof}

\begin{prop}
  Let $\fun{T}$ be an endofunctor on $\cat{C}$ and $\Lambda$ a geometric modal
  signature for $\fun{T}$. Then
  ${\leftrightarroweq} \subseteq {\leftrightarroweq_{\Lambda}}$.
\end{prop}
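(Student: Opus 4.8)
The plan is to show that the underlying relation of any Aczel-Mendler bisimulation is a $\Lambda$-bisimulation. So suppose $x \leftrightarroweq x'$ is witnessed by an Aczel-Mendler bisimulation $\topo{B} \subseteq \topo{X} \times \topo{X}'$ between $\mf{X} = (\topo{X},\gamma,V)$ and $\mf{X}' = (\topo{X}',\gamma',V')$, with transition map $\beta : \topo{B} \to \fun{T}\topo{B}$ and projections $\pi : \topo{B} \to \topo{X}$, $\pi' : \topo{B} \to \topo{X}'$ that are coalgebra morphisms. Let $B = \fun{U}\topo{B} \subseteq \fun{U}\topo{X} \times \fun{U}\topo{X}'$ be the underlying relation; I claim $B$ is a $\Lambda$-bisimulation. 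The valuation clause of Definition \ref{def-lambis}, that $x \in V(p) \iff x' \in V'(p)$ for $(x,x') \in B$, is literally one of the requirements on an Aczel-Mendler bisimulation, so the only thing left is the modal clause \eqref{ch3-eq-lambis2}.

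First I would rewrite $B$-coherence in terms of preimages: by the Remark preceding the proposition, a pair $(a_i,a_i') \in \fun{\Omega}\topo{X} \times \fun{\Omega}\topo{X}'$ is $B$-coherent iff $\pi^{-1}(a_i) = (\pi')^{-1}(a_i')$ as subsets of $B$, and since $\pi,\pi'$ are continuous this common set $c_i$ is open in $\topo{B}$. The key step is then naturality of $\lambda$ along $\pi$ and $\pi'$: for an $n$-ary $\lambda \in \Lambda$ and $B$-coherent pairs $(a_1,a_1'),\ldots,(a_n,a_n')$ with common preimages $c_1,\ldots,c_n$,
$$
  \begin{aligned}
    (\fun{T}\pi)^{-1}\big(\lambda_{\topo{X}}(a_1,\ldots,a_n)\big)
      &= \lambda_{\topo{B}}(c_1,\ldots,c_n) \\
      &= (\fun{T}\pi')^{-1}\big(\lambda_{\topo{X}'}(a_1',\ldots,a_n')\big).
  \end{aligned}
$$
Now fix $(x,x') \in B$. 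Because $\pi,\pi'$ are coalgebra morphisms we have $\gamma\circ\pi = \fun{T}\pi\circ\beta$ and $\gamma'\circ\pi' = \fun{T}\pi'\circ\beta$, so evaluating at $(x,x')$ gives $\gamma(x) = (\fun{T}\pi)(\beta(x,x'))$ and $\gamma'(x') = (\fun{T}\pi')(\beta(x,x'))$. Hence
$$
  \begin{aligned}
    \gamma(x) \in \lambda_{\topo{X}}(a_1,\ldots,a_n)
      &\iff \beta(x,x') \in (\fun{T}\pi)^{-1}\big(\lambda_{\topo{X}}(a_1,\ldots,a_n)\big) \\
      &\iff \beta(x,x') \in (\fun{T}\pi')^{-1}\big(\lambda_{\topo{X}'}(a_1',\ldots,a_n')\big) \\
      &\iff \gamma'(x') \in \lambda_{\topo{X}'}(a_1',\ldots,a_n'),
  \end{aligned}
$$
which is exactly \eqref{ch3-eq-lambis2}. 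Thus $B$ is a $\Lambda$-bisimulation linking $x$ and $x'$, i.e.\ $x \leftrightarroweq_\Lambda x'$.

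I do not expect a serious obstacle here; the argument is a short diagram chase, and essentially all of the work has already been done in the Remark identifying $B$-coherence with equality of preimages. The one point that needs a moment's care is that the topology carried by an Aczel-Mendler bisimulation $\topo{B}$ need not be the subspace topology inherited from $\topo{X}\times\topo{X}'$; but this is harmless, since the equality $\pi^{-1}(a_i)=(\pi')^{-1}(a_i')$ is purely set-theoretic and is precisely the meaning of $B$-coherence, while the openness of $c_i$ in $\topo{B}$ needed for $\lambda_{\topo{B}}(c_1,\ldots,c_n)$ to make sense follows just from continuity of $\pi$ and $\pi'$.
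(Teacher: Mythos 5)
Your proof is correct, and it takes a slightly different---and in one respect stronger---route than the paper's. The paper also reduces the claim to showing that every Aczel--Mendler bisimulation is a $\Lambda$-bisimulation and also starts from $\gamma(x)=\fun{T}\pi(\beta(x,x'))$ and $\gamma'(x')=\fun{T}\pi'(\beta(x,x'))$, but it then only proves the ``forth'' direction of \eqref{ch3-eq-lambis2} (leaving the converse to symmetry) via the chain
$(\fun{T}\pi)^{-1}(\lambda_{\topo{X}}(a_1,\ldots,a_n)) = \lambda_B(\pi^{-1}(a_1),\ldots,\pi^{-1}(a_n)) \subseteq \lambda_B((\pi')^{-1}(B[a_1]),\ldots) \subseteq \lambda_B((\pi')^{-1}(a_1'),\ldots) = (\fun{T}\pi')^{-1}(\lambda_{\topo{X}'}(a_1',\ldots,a_n'))$,
which uses \emph{monotonicity} of $\lambda$ twice (once for $c\subseteq(\pi')^{-1}(\pi'[c])$ and once for $B[a_i]\subseteq a_i'$). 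You instead use the exact reformulation of $B$-coherence as the set-theoretic equality $\pi^{-1}(a_i)=(\pi')^{-1}(a_i')$, so naturality along $\pi$ and $\pi'$ alone gives $(\fun{T}\pi)^{-1}(\lambda_{\topo{X}}(a_1,\ldots,a_n))=\lambda_{\topo{B}}(c_1,\ldots,c_n)=(\fun{T}\pi')^{-1}(\lambda_{\topo{X}'}(a_1',\ldots,a_n'))$ and hence the biconditional in one stroke. This buys two things: you obtain both directions of \eqref{ch3-eq-lambis2} simultaneously, and you do not need $\Lambda$ to be monotone---which matters, since the proposition as stated makes no monotonicity assumption, so the paper's own argument quietly invokes a hypothesis it does not have while yours does not. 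Your closing remark is also well placed: the topology on $\topo{B}$ in the Aczel--Mendler definition need not be the subspace topology, but coherence is purely set-theoretic and continuity of $\pi,\pi'$ is all that is needed for $c_i=\pi^{-1}(a_i)$ to be open so that $\lambda_{\topo{B}}(c_1,\ldots,c_n)$ is defined.
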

\begin{proof}
  It suffices to show that every Aczel-Mendler bisimulation gives rise to a
  $\Lambda$-bisimulation.
  Suppose $\topo{B}$ is an Aczel-Mendler bisimulation and let $\beta$ be the map that
  turns $\topo{B}$ into a coalgebra, then the following diagram commutes:
  \begin{equation}\label{eq-am-lambis}
    \begin{tikzcd}
      \topo{X}
            \arrow[d, "\gamma" left]
        & \topo{B}
            \arrow[l, "\pi" above]
            \arrow[r, "\pi'" above]
            \arrow[d, "\beta"]
        & \topo{X}'
            \arrow[d, "\gamma'" right] \\
      \fun{T}\topo{X}
        & \fun{T}\topo{B}
            \arrow[l, "\fun{T}\pi" below]
            \arrow[r, "\fun{T}\pi'" below]
        & \fun{T}\topo{X}'
    \end{tikzcd}
  \end{equation}

  We will show that the set $B$ underlying the topological space $\topo{B}$
  is a $\Lambda$-bisimulation.
  By definition $x \in V(p)$ iff $x' \in V'(p)$ whenever $xBx'$.
  We prove the forth condition from Definition~\ref{def-lambis}. Let
  $\lambda \in \Lambda$ and $(x, x') \in B$. Suppose $(a_1, a_1'), \ldots,
  (a_n, a_n')$ are $B$-coherent pairs of opens and $\gamma(x) \in
  \lambda_{\topo{X}}(a_1, \ldots, a_{n})$. Then we have
  \begin{align*}
    \beta(x,x')
      &\in (\fun{T}\pi)^{-1}(\lambda_{\topo{X}}(a_1, \ldots, a_{n}))
      &\text{(Follows from~\eqref{eq-am-lambis})} \\
      &= \lambda_B(\pi^{-1}(a_1), \ldots, \pi^{-1}(a_{n}))
      &\text{(Naturality of $\lambda$)}\\
      &\subseteq \lambda_B\big((\pi')^{-1} \circ \pi' [\pi^{-1}(a_1)], \ldots,
        (\pi')^{-1} \circ \pi' [\pi^{-1}(a_{n})]\big)
      &\text{(Monotonicity of $\lambda$)} \\
      &= \lambda_B\big((\pi')^{-1}(B[a_1]), \ldots, (\pi')^{-1}(B[a_{n}])\big)
      &\text{($B[a] = \pi' \circ \pi^{-1}(a)$)} \\
      &\subseteq \lambda_B((\pi')^{-1}(a_1'), \ldots, (\pi')^{-1}(a_{n}'))
      &\text{(Monotonicity of $\lambda$)} \\
      &= (\fun{T}\pi')^{-1}(\lambda_{\topo{X}'}(a_1', \ldots, a_{n}')).
      &\text{(Naturality of $\lambda$)}
  \end{align*}
  Therefore
  \begin{equation*}
    \gamma'(x')
      = (\fun{T}\pi')(\beta(x,x')) \in \lambda_{\topo{X}'}(a_1', \ldots, a_{n}'),
  \end{equation*}
  as desired.
\end{proof}

  The collection of $\Lambda$-bisimulations between two models enjoys the
  following interesting property.

\begin{prop}
  Let $\Lambda$ be a geometric modal signature of a functor $\fun{T}: \cat{Top}
  \to \cat{Top}$ and let $\mf{X} = (\topo{X}, \gamma, V)$ and
  $\mf{X}' = (\topo{X}', \gamma', V')$ be two geometric $\fun{T}$-models.
  The collection of $\Lambda$-bisimulations between $\mf{X}$ and $\mf{X}'$ forms
  a complete lattice.
\end{prop}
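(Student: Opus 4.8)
The plan is to verify that the collection of $\Lambda$-bisimulations between two fixed geometric $\fun{T}$-models $\mf{X}$ and $\mf{X}'$ is closed under arbitrary unions, and that the empty relation is trivially a $\Lambda$-bisimulation, so that the collection is a complete join-semilattice with a bottom element — which is the same thing as a complete lattice (meets being obtained as joins of lower bounds, or directly as the largest bisimulation contained in all of them). Concretely, I would first observe that $\emptyset \subseteq \fun{U}\topo{X} \times \fun{U}\topo{X}'$ vacuously satisfies all the defining conditions of Definition \ref{def-lambis}, so the collection is nonempty.

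Next I would show that if $(B_k)_{k \in K}$ is a family of $\Lambda$-bisimulations between $\mf{X}$ and $\mf{X}'$, then $B := \bigcup_{k \in K} B_k$ is again a $\Lambda$-bisimulation. Fix $(x, x') \in B$; then $(x, x') \in B_k$ for some $k$. The condition $x \in V(p) \iff x' \in V'(p)$ is immediate from the fact that $B_k$ is a $\Lambda$-bisimulation. The key point is the modal condition \eqref{ch3-eq-lambis2}: this requires showing that for every tuple of $B$-coherent pairs $(a_i, a_i')$, we have $\gamma(x) \in \lambda_{\topo{X}}(a_1, \ldots, a_n) \iff \gamma'(x') \in \lambda_{\topo{X}'}(a_1', \ldots, a_n')$. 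Here the crucial (and only slightly subtle) observation is the direction of monotonicity of the coherence notion in $B$: since $B_k \subseteq B$, we have $B_k[a] \subseteq B[a]$ and $B_k^{-1}[a'] \subseteq B^{-1}[a']$, so a $B$-coherent pair need not be $B_k$-coherent. We therefore cannot directly invoke the fact that $B_k$ is a $\Lambda$-bisimulation on the pairs $(a_i, a_i')$ themselves. The fix: given a $B$-coherent tuple, we replace each $(a_i, a_i')$ by a smaller $B_k$-coherent pair that still "sees" $x$ and $x'$ correctly — concretely, one checks that if $(a,a')$ is $B$-coherent then $(a, a')$ is also $B_k$-coherent, because $B_k[a] \subseteq B[a] \subseteq a'$ and $B_k^{-1}[a'] \subseteq B^{-1}[a'] \subseteq a$. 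So in fact every $B$-coherent pair is automatically $B_k$-coherent, and the modal condition for $B_k$ applies verbatim. This resolves the apparent obstacle.

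With arbitrary unions handled and the bottom element in place, the complete lattice structure follows by a standard argument: the meet of a family $(B_k)$ is the union of all $\Lambda$-bisimulations contained in every $B_k$ (this collection is nonempty, containing $\emptyset$, and closed under unions by the above), and the join is simply the union. Hence the collection of $\Lambda$-bisimulations between $\mf{X}$ and $\mf{X}'$ is a complete lattice. I expect the only genuine point requiring care to be the monotonicity behaviour of coherence under passage to a sub-relation, as described above; everything else is routine verification against Definition \ref{def-lambis}.
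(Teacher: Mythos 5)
Your proposal is correct and follows essentially the same route as the paper: show closure under arbitrary unions (the key point being that a $B$-coherent pair is automatically $B_k$-coherent since $B_k \subseteq B$ gives $B_k[a] \subseteq B[a] \subseteq a'$ and $B_k^{-1}[a'] \subseteq B^{-1}[a'] \subseteq a$), then invoke the standard fact that a complete join-semilattice is a complete lattice. The only cosmetic remark is that your worry that ``a $B$-coherent pair need not be $B_k$-coherent'' points in the wrong direction before you correctly resolve it; coherence is antitone in the relation, exactly as you conclude.
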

\begin{proof}
  It is obvious that the collection of $\Lambda$-bisimulations is a poset. We
  will show that this collection is closed under taking arbitrary unions; the
  result then follows from the fact that any complete semilattice is also a
  complete lattice, see e.g.~\cite[Theorem~4.2]{BurSan81}.

  Let $J$ be some index set and for all $j \in J$ let $B_j$ be
  $\Lambda$-bisimulations between $\mf{X}$ and $\mf{X}'$ and set
  $B = \bigcup_{j \in J} B_j$. We claim that $B$ is a $\Lambda$-bisimulation.

  Let $(a_i, a_i')$ be $B$-coherent pairs of opens. Suppose $xBx'$ and
  $\gamma(x) \in \lambda_{\topo{X}}(a_1, \ldots, a_n)$. Then there is $j \in J$
  with $xB_{j}x'$ hence $x \in V(p)$ iff $x' \in V'(p)$. As $B_j[a_i] \subseteq
  B[a_i] \subseteq a_i'$ and $B_j^{-1}[a'_i] \subseteq B^{-1}[a'_i] \subseteq a_i$, all
  $B$-coherent pairs $(a_i, a_i')$ are also $B_j$-coherent. Since $B_j$ is a
  $\Lambda$-bisimulation we get $\gamma'(x') \in \lambda_{\topo{X}'}(a_1',
  \ldots, a_n')$. The converse direction is proven symmetrically.
\end{proof}

  We know by now that $\Lambda$-bisimilarity implies modal equivalence.
  Furthermore, we have seen in Theorem~\ref{thm-me-be} that modal equivalence
  coincides with behavioural equivalence whenever $\fun{T}$
  is an endofunctor on $\cat{Top}$ which preserves sobriety and $\Lambda$ is a
  Scott-continuous characteristic geometric modal signature.
  In order to prove
  a converse, i.e.~that behavioural equivalence implies $\Lambda$-bisimilarity,
  we need to assume that the geometric modal signature is extendable.

  Recall that two elements $x, x'$ in two models are behaviourally equivalent in
  $\cat{Mod}(\fun{T})$, notation: $x \simeq_{\cat{Mod}(\fun{T})} x'$, if there exist
  morphisms $f, f'$ in $\cat{Mod}(\fun{T})$ such that $f(x) = f'(x')$.

\begin{prop}\label{prop-be-lambis}
  Let $\fun{T}$ be an endofunctor on $\cat{C}$ and $\Lambda$ a monotone extendable
  geometric modal signature for $\fun{T}$. Let $\mf{X} = (\topo{X}, \gamma, V)$
  and $\mf{X}' = (\topo{X}', \gamma', V')$ be two geometric $\fun{T}$-models.
  Then ${\simeq_{\cat{Mod}(\fun{T})}} \subseteq {\leftrightarroweq_{\Lambda}}$.
\end{prop}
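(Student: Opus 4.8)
The plan is to show that, given a cospan of $\fun{T}$-model morphisms $f : \mf{X} \to \mf{Y}$ and $f' : \mf{X}' \to \mf{Y}$ witnessing $x \simeq_{\cat{Mod}(\fun{T})} x'$, the relation
\[
  B = \{ (y, y') \in \fun{U}\topo{X} \times \fun{U}\topo{X}' \mid f(y) = f'(y') \}
\]
is a $\Lambda$-bisimulation between $\mf{X}$ and $\mf{X}'$ linking $x$ and $x'$. Since $(x,x') \in B$ by assumption, it suffices to verify the two clauses of Definition \ref{def-lambis}. The valuation clause is immediate: if $(y,y') \in B$ then $y \in V(p)$ iff $f(y) \in V_{\mf{Y}}(p)$ (as $f$ is a model morphism, so $V = f^{-1} \circ V_{\mf{Y}}$) iff $f'(y') \in V_{\mf{Y}}(p)$ iff $y' \in V'(p)$.

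The real content is the modal clause \eqref{ch3-eq-lambis2}. Here I would use the hypothesis that $\Lambda$ is strongly monotone: each $\lambda \in \Lambda$ is the restriction of a monotone strong open predicate lifting $\mu : (\breve{\fun{P}} \circ \fun{U})^n \to \breve{\fun{P}} \circ \fun{U} \circ \fun{T}$, which acts naturally on \emph{all} functions between the underlying sets, not merely continuous ones. Fix a tuple of $B$-coherent pairs $(a_i, a_i') \in \fun{\Omega}\topo{X} \times \fun{\Omega}\topo{X}'$; $B$-coherence says $B[a_i] \subseteq a_i'$ and $B^{-1}[a_i'] \subseteq a_i$. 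Suppose $\gamma(y) \in \lambda_{\topo{X}}(a_1, \dots, a_n) = \mu_{\topo{X}}(a_1,\dots,a_n)$. Applying $\fun{U}f$ and naturality of $\mu$ at the (continuous, hence certainly set-theoretic) map $f$ gives
\[
  \fun{T}f(\gamma(y)) \in (\fun{U}\fun{T}f)\big(\mu_{\topo{X}}(a_1,\dots,a_n)\big),
\]
and I would relate this to $\mu_{\topo{Y}}$ of suitable sets via the naturality square; the cleanest route is to observe $\fun{T}f(\gamma(y)) = \gamma_{\mf{Y}}(f(y)) = \gamma_{\mf{Y}}(f'(y'))$ since $f, f'$ are coalgebra morphisms, and then run the argument symmetrically down $f'$. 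Concretely, using naturality of $\mu$ along $f$ I would deduce $\gamma_{\mf{Y}}(f(y)) \in \mu_{\topo{Y}}(b_1, \dots, b_n)$ where $b_i = f[a_i]$ (one needs $a_i \subseteq f^{-1}(f[a_i])$ and monotonicity of $\mu$), or more carefully work with $b_i = \bigcap\{c \in \fun{\Omega}\topo{Y} : f^{-1}(c) \supseteq a_i\}$ as in the proof of Proposition \ref{prop-strong-pred-lift}. Then, descending along $f'$, monotonicity again gives $\gamma'(y') \in \mu_{\topo{X}'}((f')^{-1}(b_1), \dots)$; and $B$-coherence forces $(f')^{-1}(b_i) \subseteq a_i'$: if $(f')^{-1}(z')$ contains $z'$ with $f'(z') \in b_i$, then pick $y$ with $f(y) = f'(z')$ so $(y,z') \in B$ and $f(y) \in f[a_i]$ forces — via coherence — $z' \in B[a_i] \subseteq a_i'$. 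Hence $\gamma'(y') \in \mu_{\topo{X}'}(a_1',\dots,a_n') = \lambda_{\topo{X}'}(a_1',\dots,a_n')$. The converse implication is symmetric.

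The step I expect to be the main obstacle is pinning down exactly which intermediate open sets $b_i$ in $\topo{Y}$ to use so that the two chains of (in)equalities — naturality along $f$, monotonicity, $B$-coherence, naturality along $f'$ — compose correctly; in particular verifying that $f[a_i]$ (or its open-closure) interpolates between $a_i$ pushed up to $\topo{Y}$ and $a_i'$ pulled back, using only that $(y,y') \in B$ iff $f(y) = f'(y')$ and $B$-coherence. This is precisely where strong monotonicity is essential: $a_i$ and the $b_i$ need not be pulled back along continuous maps in a way that keeps everything open on the nose, so we genuinely need $\mu$ to be defined on arbitrary subsets and monotone there. Once the bookkeeping of these sets is settled, each individual inclusion is a one-line application of naturality or monotonicity, so I would present the argument as a short display chain analogous to the one in the proof of the preceding proposition on Aczel–Mendler bisimulations.
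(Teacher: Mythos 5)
Your proposal is correct and follows essentially the same route as the paper's proof: take $B = \{(u,u') \mid f(u) = f'(u')\}$, use monotonicity to pass from $a_i$ to $f^{-1}(f[a_i])$, naturality along $f$ and $f'$, the coalgebra-morphism identity $(\fun{T}f)(\gamma(u)) = (\fun{T}f')(\gamma'(u'))$, the set identity $(f')^{-1}(f[a_i]) = B[a_i]$, and $B$-coherence, with strong monotonicity invoked exactly where you place it, namely to apply $\lambda_{\topo{Y}}$ to the possibly non-open sets $f[a_i]$. The intermediate sets you were unsure about are indeed just $b_i = f[a_i]$, as in the paper.
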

\begin{proof}
  Suppose $x$ and $x'$ are behaviourally equivalent. Then there are some
  geometric $\fun{T}$-model $\mf{Y} = (\topo{Y}, \nu, V_{\topo{Y}})$ and
  $\fun{T}$-model morphisms $f : \mf{X} \to \mf{Y}$ and $f' : \mf{X}' \to \mf{Y}$
  such that $f(x) = f'(x')$. We will show that
  \begin{equation}
    B = \{ (u, u') \in X \times X' \mid f(u) = f'(u') \}
  \end{equation}
  is a $\Lambda$-bisimulation linking $x$ and $x'$.

  Clearly $xBx'$. It follows from Proposition~\ref{prop-mor-pres-truth} that $u$
  and $u'$ satisfy precisely the same formulas whenever $(u, u') \in B$.
  Suppose $\lambda \in \Lambda$ is $n$-ary and for $1 \leq i \leq n$ let
  $(a_i, a_i')$ be a $B$-coherent pair of opens. Suppose $uBu'$ and
  $\gamma(u) \in \lambda_{\topo{X}}(a_1, \ldots, a_n)$. We will show that
  $\gamma'(u') \in \lambda_{\topo{X}'}(a_1', \ldots, a_n')$. The converse
  direction is similar.

  By monotonicity and naturality of $\lambda$ we obtain
  \begin{equation*}
    \gamma(u) \in \lambda_{\topo{X}}(a_1, \ldots, a_n)
      \subseteq \lambda_{\topo{X}}(f^{-1}(f[a_1]), \ldots, f^{-1}(f[a_n]))
      = (\fun{T}f)^{-1}(\lambda_{\topo{Y}}(f[a_1], \ldots, f[a_n])),
  \end{equation*}
  so $(\fun{T}f)(\gamma(u)) \in \lambda_{\topo{Y}}(f[a_1], \ldots, f[a_n])$.
  (The $f[a_i]$ need not be open in $\topo{Y}$, but since $\lambda$ is extendable,
  $\lambda_{\topo{Y}}(f[a_1], \ldots, f[a_n])$ is defined.) Because $f$ and $f'$
  are coalgebra morphisms and $f(u) = f'(u')$ we have
  $(\fun{T}f)(\gamma(u)) = \nu(f(u)) = \nu(f'(u')) = (\fun{T}f')(\gamma'(u'))$.
  Finally, we get
  \begin{align*}
    \gamma'(u')
      &\in (\fun{T}f')^{-1}(\lambda_{\topo{Y}}(f[a_1], \ldots, f[a_n])) \\
      &= \lambda_{\topo{X}'}((f')^{-1}(f[a_1]), \ldots, (f')^{-1}(f[a_n]))
      &\text{(Naturality of $\lambda$)} \\
      &= \lambda_{\topo{X}'}(B[a_1], \ldots, B[a_n])
      &\text{(Monotone extendability of $\lambda$)} \\
      &\subseteq \lambda_{\topo{X}'}(a_1', \ldots, a_n').
      &\text{(Coherence of $(a_i, a_i')$)}
  \end{align*}
  This proves the proposition.
\end{proof}

\begin{rem}
  If $\cat{C} = \cat{KHaus}$ in the proposition above, then
  Proposition~\ref{prop-strong-pred-lift} allows us to drop the assumption
  that $\Lambda$ be extendable.
\end{rem}

  Let $\fun{T}$ be an endofunctor on $\cat{Top}$ and let $\Lambda$ be a geometric
  modal signature for $\fun{T}$. The following diagram summarises the results
  from Propositions~\ref{prop-lambis-me} and~\ref{prop-be-lambis} and Theorem~\ref{thm-me-be}.
  The arrows indicate that one form of equivalence implies the
  other. Here (1) holds if $\fun{T}$ preserves weak pullbacks, (2) is true when
  $\Lambda$ is Scott-continuous and characteristic and $\fun{T}$ preserves
  sobriety (cf.~Theorem~\ref{thm-me-be}),
  and (3) holds when $\Lambda$ is monotone extendable. Note that the
  converse of (2) always holds, because morphisms preserve truth
  (Proposition~\ref{prop-mor-pres-truth}).
  \begin{equation}
    \begin{tikzcd}
      {\leftrightarroweq}
            \arrow[r]
        & {\leftrightarroweq_{\Lambda}}
            \arrow[r]
        & {\equiv_{\Lambda}}
            \arrow[r, -latex, shift left=2pt, "\text{(2)}"]
        & {\simeq_{\cat{Mod}(\fun{T})}}
            \arrow[l, -latex, shift left=2pt]
            \arrow[lll, bend right=30, "\text{(1)}" above]
            \arrow[ll, bend left=30, "\text{(3)}" below]
    \end{tikzcd}
  \end{equation}

  As stated in the introduction we are not only interested in endofunctors on
  $\cat{Top}$, but also in endofunctors on full subcategories of $\cat{Top}$, in
  particular $\cat{KHaus}$.

  The implications in the diagram hold for endofunctors on $\cat{Sob}$ as well
  (use Remark~\ref{rem-top-sob}). Moreover, with some extra effort it can be made
  to work for endofunctors on $\cat{KSob}$ as well. In order to achieve this, we
  have to redo the proof for the bi-implication between modal equivalence and
  behavioural equivalence. This is the content of the following theorem.

\begin{thm}\label{thm-ksob-me-be}
  Let $\fun{T}$ be an endofunctor on $\cat{KSob}$, $\Lambda$ a Scott-continuous
  characteristic geometric modal signature for $\fun{T}$ and
  $\mf{X} = (\topo{X}, \gamma, V)$ and $\mf{X}' = (\topo{X}', \gamma', V')$ two
  geometric $\fun{T}$-models. Then
  ${\equiv_{\Lambda}} = \, {\simeq_{\cat{Mod}(\fun{T})}}$.
\end{thm}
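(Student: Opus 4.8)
The plan is to mimic the construction of the final model from Section \ref{sec:final}, but to be careful that the constructions stay inside $\cat{KSob}$ rather than leaking out of it. Recall from Remark \ref{rem-top-sob} that the direct application of Theorem \ref{thm-final-modT} fails here: the state space $\topo{Z} = \fun{pt}\mathbf{E}$ of the final model need not be compact. So instead of constructing a single final model, we only need the \emph{behavioural equivalence $=$ modal equivalence} conclusion, which is weaker. The ``$\supseteq$'' direction (behavioural equivalence implies modal equivalence) is immediate from Proposition \ref{prop-mor-pres-truth} and holds in any subcategory. The real work is ``$\subseteq$'': if $x \in \topo{X}$ and $x' \in \topo{X}'$ are modally equivalent, we must produce a geometric $\fun{T}$-model $\mf{Y}$ in $\cat{KSob}$ and morphisms $f : \mf{X} \to \mf{Y}$, $f' : \mf{X}' \to \mf{Y}$ with $f(x) = f'(x')$.

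First I would build $\mathbf{E} = \mathbf{E}(\fun{T}, \Phi, \Lambda)$ and the frame structure on it exactly as in Definitions \ref{def-final-equiv}ff; Scott-continuity of $\Lambda$ gives the normal-form Lemma \ref{lem-norm} and hence $\mathbf{E}$ is a genuine (set-sized) frame. Then I would take the theory maps $\th_{\mf{X}} : \topo{X} \to \fun{pt}\mathbf{E}$ and $\th_{\mf{X}'} : \topo{X}' \to \fun{pt}\mathbf{E}$, and let $\topo{Y}$ be the image of $\th_{\mf{X}} \sqcup \th_{\mf{X}'}$, or more precisely the sobrification of the subspace $\th_{\mf{X}}[\topo{X}] \cup \th_{\mf{X}'}[\topo{X}']$ of $\fun{pt}\mathbf{E}$ — the point being that this subspace is a continuous image of the compact spaces $\topo{X}, \topo{X}'$, hence compact, and its sobrification is then compact sober. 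Equivalently, and perhaps more cleanly, I would replace $\mathbf{E}$ by the subframe $\mathbf{E}_{x,x'}$ of $\mathbf{E}$ consisting of those $[\phi]$ that are ``realised'' by $\mf{X}$ or $\mf{X}'$, arrange that its frame of points is compact (using compactness of $\topo{X}$ and $\topo{X}'$), and check that $\fun{opn}\topo{Y}$ inherits enough structure for the coalgebra map $\zeta$ of Section \ref{sec:final} to corestrict to $\topo{Y}$. The crucial facts to verify are: (i) $\topo{Y} \in \cat{KSob}$, so that $\fun{T}\topo{Y}$ is defined; (ii) the algebra structure $\delta : \fun{L}\mathbf{E} \to \mathbf{E}$ (well-definedness via Lemma \ref{lem-delta-weldef}, which uses only naturality of $\lambda$ and is insensitive to the ambient category) restricts appropriately, yielding $\zeta : \topo{Y} \to \fun{T}\topo{Y}$; and (iii) with the valuation $p \mapsto \tilde p$, the Truth Lemma holds and $\th_{\mf{X}}, \th_{\mf{X}'}$ are $\fun{T}$-model morphisms into $\mf{Y} = (\topo{Y}, \zeta, V_{\topo{Y}})$, by the same computation as in Proposition \ref{prop-Z-exists}. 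Since modal equivalence of $x$ and $x'$ means exactly $\th_{\mf{X}}(x) = \th_{\mf{X}'}(x')$ as points of $\mathbf{E}$, we are done.

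The main obstacle is step (i): showing that the relevant space of points is compact, equivalently that the relevant frame is compact — this is precisely what fails for the unrestricted $\fun{pt}\mathbf{E}$. The resolution is that $\topo{X}$ and $\topo{X'}$ \emph{are} compact, so their images under the continuous theory maps are compact subspaces of $\fun{pt}\mathbf{E}$; one then works with (the sobrification of) the union of these two images rather than all of $\fun{pt}\mathbf{E}$, and must check that this smaller space carries a $\fun{T}$-coalgebra structure compatible with the two theory maps. A delicate point is that $\fun{T}$ need not preserve subspace inclusions, so the corestriction of $\zeta$ to $\topo{Y}$ requires an argument: one uses that $\fun{T}\topo{Y} \hookrightarrow \fun{T}(\fun{pt}\mathbf{E})$ need not hold on the nose, but that $\zeta(\th_{\mf{X}}(x)) = \fun{T}\th_{\mf{X}}(\gamma(x))$ lands in the image of $\fun{T}\th_{\mf{X}}$, hence in $\fun{T}\topo{Y}$ up to the canonical comparison map, and similarly for $\mf{X}'$ — so $\zeta$ restricted to $\th_{\mf{X}}[\topo{X}] \cup \th_{\mf{X}'}[\topo{X}']$ does corestrict, and extends to the sobrification since $\fun{T}\topo{Y}$ is sober. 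Once this bookkeeping is dispatched, the remaining steps are verbatim copies of the $\cat{Top}$ case.
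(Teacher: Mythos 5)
Your overall strategy --- localise the final-model construction of Section~\ref{sec:final} to the two given models so that everything stays compact --- is exactly the right one, and it is the strategy the paper follows. But the concrete route you propose has a genuine gap: you build $\topo{Y}$ inside $\fun{pt}\mathbf{E}$ and then try to obtain its coalgebra structure by restricting the map $\zeta$ of Section~\ref{sec:final} and by computing with $\fun{T}\th_{\mf{X}}$ where $\th_{\mf{X}} : \topo{X} \to \fun{pt}\mathbf{E}$. None of these objects exist in the present setting: $\fun{T}$ is an endofunctor on $\cat{KSob}$ only, and $\fun{pt}\mathbf{E}$ need not be compact (this is precisely the failure recorded in Remark~\ref{rem-top-sob}). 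Hence $\fun{T}(\fun{pt}\mathbf{E})$, the functor $\fun{L}=\fun{opn}\circ\fun{T}\circ\fun{pt}$ applied to $\mathbf{E}$, the algebra map $\delta$, the coalgebra map $\zeta$, and the comparison map $\fun{T}\topo{Y}\to\fun{T}(\fun{pt}\mathbf{E})$ that your ``delicate point'' paragraph appeals to are all undefined. There is nothing to restrict or corestrict. Your fallback of passing to ``the subframe $\mathbf{E}_{x,x'}$ of realised elements'' does not repair this either: a subframe is the wrong algebraic operation (it neither changes the points nor obviously yields compactness), and you give no argument for compactness of whatever frame you intend.

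The repair, which is what the paper does (deferring details to \cite[Theorem~3.34]{Gro18}), is to change the frame \emph{before} ever applying $\fun{pt}$ or $\fun{T}$: quotient $\GML(\Phi,\Lambda)$ by the coarser, model-relative equivalence $\phi\equiv_2\psi$ iff $\llb\phi\rrb^{\mf{X}}=\llb\psi\rrb^{\mf{X}}$ and $\llb\phi\rrb^{\mf{X}'}=\llb\psi\rrb^{\mf{X}'}$, and prove directly that $\mathbf{E}_2=\GML(\Phi,\Lambda)/{\equiv_2}$ is a compact (and spatial) frame --- compactness follows from compactness of $\topo{X}$ and $\topo{X}'$: a cover $\bigvee[\phi_i]=\top$ in $\mathbf{E}_2$ yields open covers of both spaces, finite subcovers of each, and the union of the two finite index sets witnesses compactness of $\mathbf{E}_2$. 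Only now is $\topo{Z}_2=\fun{pt}\mathbf{E}_2$ a compact sober space, so that $\fun{T}\topo{Z}_2$ exists and the Section~\ref{sec:final} machinery ($\delta$, $\zeta$, the truth lemma, Proposition~\ref{prop-Z-exists}) can be rerun verbatim with $\mathbf{E}_2$ in place of $\mathbf{E}$; the well-definedness argument of Lemma~\ref{lem-delta-weldef} goes through because it shows the relevant truth sets agree in \emph{every} model, in particular in $\mf{X}$ and $\mf{X}'$. Your space $\topo{Y}$ is in fact homeomorphic to $\topo{Z}_2$ (the frame of opens of the union of the images of the theory maps is exactly $\mathbf{E}_2$), so you have the right object; what is missing is the recognition that its coalgebra structure must be constructed from the compact frame $\mathbf{E}_2$ from scratch rather than inherited from a nonexistent structure on $\fun{pt}\mathbf{E}$.
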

\begin{proof}
  If $x$ and $x'$ are behaviourally equivalent then they are modally equivalent
  by Proposition~\ref{prop-mor-pres-truth}. The converse direction can be proved
  using similar reasoning as in Section~\ref{sec-final}. The major difference is
  the following: We define an equivalence relation $\equiv_2$ on $\GML(\Phi, \Lambda)$
  by $\phi \equiv_2 \psi$ iff $\llb \phi \rrb^{\mf{X}} = \llb \psi \rrb^{\mf{X}}$
  and $\llb \phi \rrb^{\mf{X}'} = \llb \psi \rrb^{\mf{X}'}$. (Note that $\mf{X}$
  and $\mf{X}'$ are now fixed.) That is, $\phi \equiv_2 \psi$ iff $\phi$ and
  $\psi$ are satisfied by precisely the same states in $\mf{X}$ and $\mf{X}'$
  (compare Definition~\ref{def-final-equiv}). The frame $\mathbf{E}_2 :=
  \GML(\Phi, \Lambda) /{\equiv_2}$ can then be shown to be a compact frame and hence
  $\topo{Z}_2 := \fun{pt} \mathbf{E}_2$ is a compact sober space. The remainder
  of the proof is analogous to the proof of Theorem~\ref{thm-me-be}. A detailed
  proof can be found in~\cite[Theorem~3.34]{Gro18}.
\end{proof}

  We summarise the results for $\cat{Top}$ and two of its full subcategories:

\begin{thm}\label{thm-me-be-lambis2}
  Let $\fun{T}$ be a sobriety-preserving endofunctor on $\cat{Top}$
  or an endofunctor on $\cat{Sob}$ or $\cat{KSob}$, and
  $\Lambda$ a monotone extendable Scott-continuous characteristic geometric modal
  signature for $\fun{T}$. If $x$ and $x'$ are two states in two geometric
  $\fun{T}$-models, then
  \begin{equation*}
    x \leftrightarroweq_{\Lambda} x'
      \iff x \equiv_{\Lambda} x'
      \iff x \simeq_{\cat{Mod}(\fun{T})} x'.
  \end{equation*}
\end{thm}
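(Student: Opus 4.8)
The plan is to obtain the three-way coincidence by closing the cycle of inclusions
\[
  {\leftrightarroweq_{\Lambda}} \;\subseteq\; {\equiv_{\Lambda}} \;\subseteq\; {\simeq_{\cat{Mod}(\fun{T})}} \;\subseteq\; {\leftrightarroweq_{\Lambda}},
\]
each of which has in fact already been proved in this section in the required generality. The first inclusion, ${\leftrightarroweq_{\Lambda}} \subseteq {\equiv_{\Lambda}}$, is precisely Proposition~\ref{prop-lambis-me} and needs no hypothesis beyond $\Lambda$ being a geometric modal signature for $\fun{T}$. The third inclusion, ${\simeq_{\cat{Mod}(\fun{T})}} \subseteq {\leftrightarroweq_{\Lambda}}$, is Proposition~\ref{prop-be-lambis}; this is where the assumption that $\Lambda$ is strongly monotone is used, and since $\cat{Top}$, $\cat{Sob}$ and $\cat{KSob}$ are all full subcategories of $\cat{Top}$, that proposition applies verbatim in each of the three admissible settings.

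The middle inclusion ${\equiv_{\Lambda}} \subseteq {\simeq_{\cat{Mod}(\fun{T})}}$ is the only one whose justification depends on the ambient category, and here I would split into cases. If $\fun{T}$ is an endofunctor on $\cat{Top}$ (preserving sobriety, as needed for the final-model construction), this is Theorem~\ref{thm-me-be}, which rests on the final model $\mf{Z}$ of Theorem~\ref{thm-final-modT} and uses exactly that $\Lambda$ is Scott-continuous and characteristic. If $\fun{T}$ is an endofunctor on $\cat{Sob}$, the same construction goes through by Remark~\ref{rem-top-sob}. If $\fun{T}$ is an endofunctor on $\cat{KSob}$, one instead invokes Theorem~\ref{thm-ksob-me-be}, whose proof replaces the frame $\mathbf{E}$ by the compact frame $\mathbf{E}_2$ of formulas modulo equivalence on the two fixed models, so that $\fun{pt}\mathbf{E}_2$ stays inside $\cat{KSob}$. (The reverse inclusion ${\simeq_{\cat{Mod}(\fun{T})}} \subseteq {\equiv_{\Lambda}}$ also holds unconditionally, by Proposition~\ref{prop-mor-pres-truth}, but it is subsumed once the cycle above is closed.)

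Chaining the three inclusions yields ${\leftrightarroweq_{\Lambda}} = {\equiv_{\Lambda}} = {\simeq_{\cat{Mod}(\fun{T})}}$, which is the assertion. I do not expect any genuine obstacle: the content is entirely in the earlier results, and the only care required is bookkeeping --- checking that in each of the three base categories the hypotheses of the cited statement (Scott-continuity and characteristicness for the final model, strong monotonicity for Proposition~\ref{prop-be-lambis}) are exactly those assumed here, and selecting the correct one of Theorem~\ref{thm-me-be}, Remark~\ref{rem-top-sob}, Theorem~\ref{thm-ksob-me-be} for the ${\equiv_{\Lambda}} \subseteq {\simeq_{\cat{Mod}(\fun{T})}}$ step.
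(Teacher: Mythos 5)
Your proposal is correct and is essentially the paper's own argument: the theorem is stated there as a summary obtained by chaining Proposition~\ref{prop-lambis-me}, Theorem~\ref{thm-me-be} (resp.\ Remark~\ref{rem-top-sob} or Theorem~\ref{thm-ksob-me-be}), and Proposition~\ref{prop-be-lambis}. Your parenthetical note that the $\cat{Top}$ case implicitly requires $\fun{T}$ to preserve sobriety is a correct and worthwhile observation, since that hypothesis is needed for Theorem~\ref{thm-me-be} but is not made explicit in the theorem statement.
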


\section{Conclusion}

  We have started building a framework for coalgebraic \emph{geometric} logic and
  we have investigated examples of concrete functors. There are still many
  unanswered and interesting questions. We outline possible directions for
  further research.

\begin{description}
  \item[Modal equivalence versus behavioural equivalence]
        From Theorem~\ref{thm-me-be-lambis2} we know that modal equivalence and
        behavioural equivalence coincide in $\cat{Mod}(\fun{T})$ if $\fun{T}$ is
        an endofunctor on $\cat{KSob}$, $\cat{Sob}$ or an endofunctor on
        $\cat{Top}$ which preserves sobriety. A natural question is whether the
        same holds when $\fun{T}$ is an endofunctor on $\cat{KHaus}$.
  \item[When does a lifted functor restrict to $\cat{KHaus}$?]
        We know of two examples, namely the powerset functor with the box and
        diamond lifting, and the monotone functor on $\cat{Set}$ with the box and
        diamond lifting, where the lifted functor on $\cat{Top}$ restricts to
        $\cat{KHaus}$. It would be interesting to investigate whether there are
        explicit conditions guaranteeing that the KKP lift of a functor restricts to
        $\cat{KHaus}$. These conditions could be either for the $\cat{Set}$-functor
        one starts with, or the collection of predicate liftings for this functor,
        or both.
  \item[Modalities and finite observations]
        Geometric logic is generally introduced as the logic of finite
        observations, and this explains the choice of connectives ($\land$,
        $\bigvee$ and, in the first-order version, $\exists$).
        We would like to understand to which degree \emph{modalities} can safely
        be added to the base language, without violating the (semantic) intuition
        of finite observability.
        Clearly there is a connection with the requirement of
        \emph{Scott-continuity} (preservation of directed joins), and we would
        like to make this connection precise, specifically in the topological setting.
  \item[Connection to geometric predicate logic]
        The extension of geometric logic with predicates is discussed in
        e.g.~\cite[Chapter D1]{Joh02} and~\cite{Vic07}. In this setting
        toposes replace the r\^{o}le of frames. This raises the question of
        how modal geometric logic and geometric predicate logics relate.
        For example, can we find an analogue of the Van Benthem characterisation
        theorem, characterising modal geometric logic as a bisimulation-invariant
        fragment of geometric predicate logic?
  \item[Order-enriched category theory]
        The fact that all examples in this paper use open predicate liftings
        that are monotone suggests that we could also work in the setting
        of order-enriched category theory.
        The richer structure of order-enriched category theory was successfully
        used in the study of positive coalgebraic logics~\cite{KapKurVel14,BalKurVel15}.
  \item[Connection with domain theory]
        In~\cite{Abr91} Abramsky investigates distributive lattices, rather than frames.
        This raises the question which of his examples have extensions to frames.
  \item[An abstract view on coalgebraic logic]
        If we abstract away from predicate liftings, a logic for an endofunctor
        $\fun{T}$ on $\cat{Top}$ may be viewed as a functor
        $\fun{L} : \cat{Frm} \to \cat{Frm}$ together with a natural transformation
        $\rho : \fun{L} \circ \fun{opn} \to \fun{opn} \circ \fun{T}$~\cite{Kli07,KurRos12}.
        A generic choice for
        $\fun{L}$ is $\fun{L} = \fun{opn} \circ \fun{T} \circ \fun{pt}$,
        which is also used in Section~\ref{sec:final} to construct a final
        $\fun{T}$-coalgebra.
        Moreover, in the setting of Section~\ref{sec:final} the algebra
        $(\mathbf{E}, \delta)$ is the initial $\fun{L}$-algebra,
        which gives rise to a completeness result.
        A natural question is whether or not an endofunctor $\fun{L} : \cat{Frm} \to \cat{Frm}$
        can be presented by generators and relations.
        Similar questions for endofunctors on the categories of Boolean
        algebras and distributive lattices are addressed in~\cite{KurRos12}
        and~\cite{BalKurVel15}, respectively.
\end{description}

\section*{Acknowledgements}
  We would to thank the anonymous referees for many constructive and helpful
  comments, which helped embed our paper more closely into the body of
  existing research and opened many potential avenues for further research.


\bibliography{biblio.bib}{}
\bibliographystyle{alphaurl}

\end{document}